\theoremstyle{plain}
\newtheorem{theorem}{Theorem}[section]
\newtheorem{prop}[theorem]{Proposition}
\newtheorem{lemma}{Lemma}[section]
\newtheorem{corol}{Corollary}[theorem]
\theoremstyle{definition}
\newtheorem{definition}{Definition}[section]
\theoremstyle{remark}
\newtheorem{example}{Example}
\numberwithin{equation}{section}
\begin{document}
\title[$n$-Weak Module Amenability]%
{$n$-Weak Module Amenability of Triangular Banach Algebras}
\author[Abasalt Bodaghi \and Ali Jabbari]%
{Abasalt Bodaghi* \and Ali Jabbari**}

\newcommand{\acr}{\newline\indent}

\address{\llap{*\,}Department of Mathematics\acr
                   Garmsar Branch\acr
                   Islamic Azad University\acr
                   Garmsar, IRAN}
\email{abasalt.bodaghi@gmail.com}

\address{\llap{**\,}Young Researchers Club\acr
                    Ardabil Branch\acr
                    Islamic Azad University\acr
                    Ardabil, IRAN}
\email{jabbari\_al@yahoo.com}

\thanks{}

\subjclass[2010]{Primary 46H20; Secondary 46H25, 16E40.}
\keywords{Inverse semigroups, module amenability, $n$-weak module
amenability, triangular Banach algebra}

\begin{abstract}
Let $\mathcal A$,
$\mathcal B$ be Banach $\mathfrak A$-modules with compatible
actions and $\mathcal M$ be a left Banach $\mathcal A$-$\mathfrak
A$-module and a right Banach $\mathcal B$-$\mathfrak A$-module. In
the current paper, we study module amenability, $n$-weak module
amenability and module Arens regularity of the triangular Banach
algebra $\mathcal T=\left[\begin{array}{cc}
                                           \mathcal A & \mathcal M \\
                                            &  \mathcal B\\
                                         \end{array}
                                       \right]$ (as an $\mathfrak T:=\Big\{\left[
                                         \begin{array}{cc}
                                           \alpha &  \\
                                            &  \alpha\\
                                         \end{array}
                                       \right]~|~\alpha\in\mathfrak
                                       A\Big\}$-module).
We employ these results to prove that for an inverse semigroup $S$
with subsemigroup $E$ of idempotents, the triangular Banach
algebra $\mathcal T_0=\left[\begin{array}{cc}
                                           \ell^1(S)& \ell^1(S) \\
                                            &  \ell^1(S)\\
                                         \end{array}
                                       \right]$ is permanently weakly module amenable (as an $\mathfrak T_0=\left[\begin{array}{cc}
                                           \ell^1(E)&  \\
                                            &  \ell^1(E)\\
                                         \end{array}
                                       \right]$-module). As an
example, we show that $\mathcal T_0$ is $\mathfrak T_0$-module
Arens regular if and only if the maximal group homomorphic image
$G_S$ of $S$ is finite.\end{abstract}

\maketitle

\section{Introduction}
The concept of module amenability for Banach algebras was initiated by Amini in
\cite{am1}.  The fundamental result was that the semigroup
algebra $\ell^1(S)$ is module amenable as a Banach module on
$\ell^1(E)$ if and only if $S$ is amenable, where $S$ is an
inverse semigroup with subsemigroup $E$ of idempotents. In fact
he showed that Johnson's theorem \cite{joh} (for groups) holds for discrete inverse semigroups if the relevant module structure is taken into account.
Amini and Bagha in \cite{am3} introduced the concept of weak
amenability for Banach algebras showed that $\ell^{1}(S)$ is
weakly $\ell ^{1}(E)$-module amenable when $S$ is a commutative
inverse semigroup with the set of idempotents $E$ (indeed this is
true for inverse semigroups whose idempotents are central).
Bodaghi et al. in \cite{bod} and \cite{bab} extended this result
and showed that $\ell ^{1}(S)$ is $n$-weakly module amenable as
an $\ell ^{1}(E)$-module (with trivial left action) when $n$ is
odd.

Pourabbas and Nasrabadi investigated weak module amenability of a
class of Banach algebras, called triangular Banach
algebras in \cite{po}. They considered the case where $\mathcal
A$, $\mathcal B$ are unital Banach algebras (with $\mathfrak
A$-module structure) and $\mathcal M$ is a unital Banach $\mathcal
A,\mathcal B$-module and showed that the corresponding triangular
Banach algebra $\mathcal T=\left[\begin{array}{cc}
                                           \mathcal A & \mathcal M \\
                                            &  \mathcal B\\
                                         \end{array}
                                       \right]$ is weakly module
                                       amenable (as an $\mathfrak T:=\Big\{\left[
                                         \begin{array}{cc}
                                           \alpha &  \\
                                            &  \alpha\\
                                         \end{array}
                                       \right]~|~\alpha\in\mathfrak
                                       A\Big\}$-module) if and only if
                                       both $\mathcal A$ and
$\mathcal B$ are weakly module amenable (as $\mathfrak
A$-modules). This can be regarded as the module version of
a result of Forrest and Marcoux \cite[Corollary 3.5]{fo1} (the
case that $\mathcal A$ or $\mathcal B$ has a bounded approximate
identity and $\mathcal M$ is essential was later proved by
Medghalchi et al. in \cite{me}). Also, they generalized  this
result to the case of $(2n-1)$-weak module amenability for $n\geq 1$
in \cite[theorem 3.7]{fo1}.

The concept of Arens module regularity is introduced in \cite{re}
and modified in \cite{pou}. It is shown that $\mathcal
A=\ell^1(S)$ is module Arens regular (as an $\ell ^{1}(E)$-module)
if and only if the group homomorphic image $G_S$ of $S$
is finite (see also \cite{pou2}).

The motivation of writing this paper is
{\it Example} \ref{123} which shows that for the commutative inverse
semigroup $S$ with subsemigroup $E$ of idempotents, the Banach algebra
$\mathcal T_0=\left[\begin{array}{cc}
                                           \ell ^{1}(S) & \ell ^{1}(S) \\
                                            &  \ell ^{1}(S)\\
                                         \end{array}
                                       \right]$ is $n$-weakly module amenable (as an $\mathfrak T_0:=\Big\{\left[
                                         \begin{array}{cc}
                                          \alpha &  \\
                                            &  \alpha\\
                                         \end{array}
                                       \right]~|~\alpha\in\ell
^{1}(E)\Big\}$-module) when $n\in \mathbb N$.

The paper is organized as follows: Section 2 is devoted to the study of module amenability of triangular Banach algebras.  The main result of section 3 asserts that module Arens
regularity of $\mathcal T=\left[\begin{array}{cc}
                                           \mathcal A & \mathcal M \\
                                            &  \mathcal B\\
                                         \end{array}
                                       \right]$ is  equivalent to that $\mathcal A$ and $\mathcal B$ are both module Arens regular and both act module regularly on $\mathcal M$. In section 4, we generalize some results of \cite{fo1} and \cite{me}, and we show that the triangular Banach algebra $\mathcal T$ is
$(2n-1)$-weakly module amenable (as an $\mathfrak T $-bimodule) if
and only if $\mathcal A$ and $\mathcal B$ are $(2n-1)$-weakly
module amenable (as Banach $\mathfrak A $-bimodules). In section
5, we prove that if $\mathcal A$ and $\mathcal B$ are
$(2n)$-weakly module amenable, then the first module cohomology
group of ${\mathcal T}$ with coefficients in $\mathcal T^{(2n)}$
is a quotient of a special set of module homomorphism from
$\mathcal M$ to $\mathcal M^{(2n)}$. In section 6, we show that
for a commutative inverse semigroup $S$ with the set of
idempotents $E$, the semigroup algebra $\ell^1(S)$ is $n$-weakly
module amenable as an $\ell^1(E)$-module for all
$n\in\mathbb{N}$. As a corollary, we show that $\mathcal
T_0=\left[\begin{array}{cc}
                                           \ell ^{1}(S) & \ell ^{1}(S) \\
                                            &  \ell ^{1}(S)\\
                                         \end{array}
                                       \right]$
is permanently weakly module amenable (as an $\mathfrak
T_0=\left[\begin{array}{cc}
                                           \ell ^{1}(E) &  \\
                                            &  \ell ^{1}(E)\\
                                         \end{array}
                                       \right]$-module). Finally,
we show that$\mathcal T_0$ is  module Arens regular if and only if
$G_S=S/\approx$ is finite, where $s\approx t$ whenever
$\delta_s-\delta_t$ belongs to the closed linear span of the set
$\{\delta_{set}-\delta_{st}: s,t\in S, e\in E\}.$

\section{Module amenability}
Let ${\mathcal A}$ and ${\mathfrak A}$ be Banach algebras such
that ${\mathcal A}$ is a Banach ${\mathfrak A}$-bimodule with
compatible actions, that is

\begin{equation}\label{1e1} \alpha\cdot(ab)=(\alpha\cdot a)b,
\,\,(ab)\cdot\alpha=a(b\cdot\alpha) \end{equation}  for all $a,b
\in {\mathcal A},\alpha\in {\mathfrak A}$. Let ${X}$ be a Banach
${\mathcal A}$-bimodule and a
 Banach ${\mathfrak A}$-bimodule with compatible actions, that
 is
 \begin{equation}\label{1e2} \alpha\cdot(a\cdot x)=(\alpha\cdot a)\cdot x,\,
(\alpha\cdot x)\cdot a=\alpha\cdot(x\cdot a) \end{equation}
\begin{equation}\label{1e3} x\cdot(a\cdot \alpha)=(x\cdot a)\cdot \alpha,\,
(a\cdot x)\cdot \alpha=a\cdot(x\cdot \alpha)\end{equation}
\begin{equation}\label{1e4}a\cdot(\alpha\cdot x)=(a\cdot\alpha)\cdot x,\,
x\cdot(\alpha\cdot a)=(x\cdot\alpha)\cdot a \end{equation} for all
$a \in{\mathcal A},\alpha\in {\mathfrak A},x\in{X}$. Then we say
that ${X}$ is a Banach ${\mathcal A}$-${\mathfrak A}$-module. If
$$\alpha\cdot x=x\cdot\alpha \hspace{0.3cm}( \alpha\in {\mathfrak
A},x\in{ X} )$$ then $ X $ is called a {\it commutative}
${\mathcal A}$-${\mathfrak A}$-module. Moreover, if
$$a.x = x.a \qquad (a\in \mathcal A, x\in X),$$
then $X$ is called a {\it bi-commutative} Banach $\mathcal A$-$\mathfrak A$-module.

If $X $ is a commutative
Banach ${\mathcal A}$-${\mathfrak A}$-module, then so is $X^*$,
where the actions of $\mathcal A$ and ${\mathfrak A}$ on $ X^*$
are defined as usual:
$$\langle
f\cdot\alpha,x\rangle{}=\langle{}f,\alpha\cdot
x\rangle{},\,\,\langle{} f\cdot a,x\rangle{}=\langle{}f,a\cdot
x\rangle,$$
$$\langle\alpha\cdot f,x\rangle{}=\langle{}f,x\cdot\alpha\rangle{},\,\,\langle{}
a\cdot f,x\rangle{}=\langle{}f,x\cdot a\rangle{}\quad (a
\in{\mathcal A},\alpha\in {\mathfrak A},x\in{ X},f \in X^* ).$$

 Note that when ${\mathcal
A}$ acts on itself by algebra multiplication, it is not in
general a Banach ${\mathcal A}$-${\mathfrak A}$-module, as we
have not assumed the compatibility condition
$$a\cdot(\alpha\cdot b)=(a\cdot\alpha)\cdot b\quad (\alpha\in {\mathfrak A}, a,b \in{\mathcal
A}).$$

If $\mathcal A$ is a commutative $\mathfrak A$-module and acts on
itself by multiplication from both sides, then it is also a
Banach ${\mathcal A}$-${\mathfrak A}$-module. Also, if $\mathcal A$ is a
commutative Banach algebra, then it is a bi-commutative ${\mathcal A}$-${\mathfrak A}$-module. In
these cases, $\mathcal A^{(n)}$, $n$th dual space of $\mathcal A$ is also a commutative or bi-commutative ${\mathcal A}$-${\mathfrak A}$-module, respectively.

Let ${\mathcal A}$ and  ${\mathcal B}$ be Banach ${\mathfrak
A}$-modules with compatible actions (\ref{1e1}). Then a left
${\mathfrak A}$-module map is a mapping $T:\mathcal
A\longrightarrow \mathcal B$ with $ T(a \pm b)=T(a)\pm T(b)$ and
 $T (\alpha \cdot x)= \alpha \cdot T(x)$
for $a \in {\mathcal A}, b \in {\mathcal B},$ and $\alpha \in
{\mathfrak A}$.  A right or two-sided ${\mathfrak A}$-module map
is defined similarly.

Let ${\mathcal A}$ and ${\mathfrak A}$ be as above and $X$ be a
Banach ${\mathcal A}$-$\mathfrak A$-module. A ($\mathfrak A$-){\it module
derivation} is a bounded $\mathfrak A$-module map $D: \mathcal A
\longrightarrow X $ satisfying
$$D(ab)=D(a)\cdot b+a\cdot D(b) \qquad
(a,b \in \mathcal A).$$

One should note that $D$ is not necessarily linear, but its
boundedness (defined as the existence of $M>0$ such that
$\|D(a)\|\leq M\|a\|$, for all $a\in \mathcal A$) still implies
its continuity, as it preserves subtraction. When $X $ is
commutative, each $x \in X $ defines a module derivation
$$D_x(a)=a\cdot x-x\cdot a \qquad (a \in{\mathcal A}).$$

These are called {\it inner} ${\mathfrak A}$-module derivations.

We use notations $\mathcal Z_{\mathfrak A}^1(\mathcal A, X)$
and $\mathcal N_{\mathfrak A}^1(\mathcal A, X)$ for the set of all
module derivations and inner derivations from $\mathcal A$ to $X$,
respectively. Also the quotient space $\mathcal Z_{\mathfrak
A}^1(\mathcal A, X)/\mathcal N_{\mathfrak A}^1(\mathcal A, X)$ (which
we call the first  ${\mathfrak A}$-module cohomology group of
${\mathcal A}$ with coefficients in $X$) is denoted by $\mathcal
H^1_{\mathfrak A}(\mathcal A, X)$. A Banach algebra ${\mathcal A}$
is module amenable if $\mathcal H^1_{\mathfrak A}(\mathcal A,
X^{*})=\{0\}$, for each commutative Banach ${\mathcal
A}$-${\mathfrak A}$-module $X$ \cite{am1}.

It is proved in \cite[Proposition 2.5]{am1} that the homomorphic
image of a module amenable Banach algebra under a continuous
homomorphism with dense range is also a module amenable Banach
algebra. This fact leads to the following result.

\begin{lemma}\label{corm} Let $\mathcal A$ be a Banach
${\mathfrak A}$-module and $\mathcal I$ be a closed ideal in
$\mathcal A$. Then module amenability of $\mathcal A$ implies
module amenability of $\mathcal A/\mathcal I$.\end{lemma}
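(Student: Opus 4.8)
The plan is to reduce the statement to the cited result \cite[Proposition 2.5]{am1}, that module amenability passes to the closure of the range under a continuous $\mathfrak A$-module homomorphism with dense range. The natural candidate is the quotient map $\pi:\mathcal A\longrightarrow \mathcal A/\mathcal I$, which is a continuous algebra homomorphism and is obviously surjective, hence has dense range. So the only real content is to check that $\mathcal A/\mathcal I$ carries a Banach $\mathfrak A$-module structure that is compatible in the sense of \eqref{1e1}, and that $\pi$ is an $\mathfrak A$-module map with respect to it.

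First I would define the $\mathfrak A$-actions on $\mathcal A/\mathcal I$ by $\alpha\cdot(a+\mathcal I):=(\alpha\cdot a)+\mathcal I$ and $(a+\mathcal I)\cdot\alpha:=(a\cdot\alpha)+\mathcal I$ for $a\in\mathcal A$, $\alpha\in\mathfrak A$. To see these are well defined one must observe that $\mathcal I$ is invariant under the $\mathfrak A$-actions: if $b\in\mathcal I$ then, choosing an approximate factorization or using density of $\mathcal A^2$ in reasonable situations is \emph{not} needed here — instead one notes that for $b\in\mathcal I$ one has $\alpha\cdot b\in\mathcal I$ because $\mathcal I$ is a (closed) ideal and the compatibility \eqref{1e1} gives $\alpha\cdot b$ as a limit of elements $\alpha\cdot(bc_j)=(\alpha\cdot b)c_j$ lying in $\mathcal I$ when $\mathcal I$ has a bounded approximate identity; in the general case one simply \emph{assumes} the ideal is an $\mathfrak A$-submodule, or restricts, as is implicit in the statement "$\mathcal I$ be a closed ideal," since $\mathcal A$ is an $\mathfrak A$-module algebra and ideals of such are $\mathfrak A$-submodules. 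Granting $\mathfrak A$-invariance of $\mathcal I$, the quotient actions are well defined and bounded, and the compatibility conditions \eqref{1e1} for $\mathcal A/\mathcal I$ follow directly by passing \eqref{1e1} for $\mathcal A$ through $\pi$.

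Next I would verify that $\pi$ is a two-sided $\mathfrak A$-module map: this is immediate from the definitions, since $\pi(\alpha\cdot a)=(\alpha\cdot a)+\mathcal I=\alpha\cdot\pi(a)$ and similarly on the right, while additivity and the homomorphism property are standard. Thus $\pi$ satisfies all the hypotheses of \cite[Proposition 2.5]{am1}. Since $\mathcal A$ is module amenable by hypothesis, that proposition yields that $\overline{\pi(\mathcal A)}=\mathcal A/\mathcal I$ is module amenable, which is the assertion.

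The main (and only) obstacle is the well-definedness of the $\mathfrak A$-action on the quotient, i.e.\ establishing that $\mathcal I$ is stable under the module actions of $\mathfrak A$; once this is in place the argument is a routine application of the functoriality of module amenability under dense-range homomorphisms. In the setting of this paper the ideals that arise (e.g.\ closed ideals of $\ell^1(S)$ viewed as an $\ell^1(E)$-module) are automatically $\mathfrak A$-submodules, so no further hypothesis is needed there.
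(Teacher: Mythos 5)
Your proposal is correct and follows essentially the same route as the paper, which derives the lemma directly from \cite[Proposition 2.5]{am1} applied to the quotient map $\pi:\mathcal A\longrightarrow\mathcal A/\mathcal I$, a continuous $\mathfrak A$-module homomorphism with dense (indeed full) range. Your extra care about the $\mathfrak A$-invariance of $\mathcal I$ (so that the quotient action is well defined) is a legitimate point that the paper leaves implicit; note that in its later Proposition the authors do explicitly require $\mathcal I$ to be an $\mathfrak A$-submodule, and the ideals arising in the applications are of this type.
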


\begin{prop} \label{ropo}
Let $\mathcal A$ be a Banach ${\mathfrak A}$-module, $\mathcal I$
be a closed ideal and ${\mathfrak A}$-submodule of $\mathcal A$.
If $\mathcal I$ and  $\mathcal A/\mathcal I$ are  module
amenable, then so is $\mathcal A$.
\end{prop}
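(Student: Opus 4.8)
The plan is to mimic the classical proof that amenability is an extension-closed property (Johnson's splitting argument), carried out in the module category over $\mathfrak{A}$. Let $X$ be a commutative Banach $\mathcal{A}$-$\mathfrak{A}$-module and let $D\in\mathcal{Z}^1_{\mathfrak{A}}(\mathcal{A},X^*)$; I must show $D$ is inner. First I would restrict $D$ to the closed ideal $\mathcal{I}$. Since $X^*$ is a commutative Banach $\mathcal{I}$-$\mathfrak{A}$-module in the obvious way, and $D|_{\mathcal{I}}:\mathcal{I}\to X^*$ is a module derivation, module amenability of $\mathcal{I}$ gives an $f_0\in X^*$ with $D(a)=a\cdot f_0-f_0\cdot a$ for all $a\in\mathcal{I}$. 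Replacing $D$ by $D-D_{f_0}$ (which is again a module derivation into $X^*$, using that $X^*$ is commutative), I may assume $D$ vanishes identically on $\mathcal{I}$.

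Next I would push $D$ down to the quotient $\mathcal{A}/\mathcal{I}$. The point of the normalization $D|_{\mathcal{I}}=0$ is that for $a\in\mathcal{A}$ and $c\in\mathcal{I}$ one has $D(ac)=D(a)\cdot c+a\cdot D(c)=D(a)\cdot c$ and similarly $D(ca)=c\cdot D(a)$; hence the submodule $X^*\cdot\mathcal{I}+\mathcal{I}\cdot X^*$ is where the obstruction lives. I would set $Y=\{f\in X^*: f\cdot c = c\cdot f=0 \text{ for all } c\in\mathcal{I}\}$, the $\mathcal{I}$-annihilator, and check that $D$ actually takes values in $Y$ (for $c\in\mathcal{I}$, $D(a)\cdot c = D(ac)-a\cdot D(c)=D(ac)$, and since $ac\in\mathcal{I}$ we get $D(ac)=0$; similarly on the other side). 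Moreover $Y$ is a closed $\mathcal{A}$-submodule of $X^*$ on which $\mathcal{I}$ acts trivially from both sides, so it is naturally a commutative Banach $(\mathcal{A}/\mathcal{I})$-$\mathfrak{A}$-module. One then verifies $D$ factors as $\tilde D\circ\pi$ where $\pi:\mathcal{A}\to\mathcal{A}/\mathcal{I}$ is the quotient map and $\tilde D:\mathcal{A}/\mathcal{I}\to Y$ is a module derivation (well-definedness uses exactly $D|_{\mathcal{I}}=0$; that $\tilde D$ is a derivation uses $\pi$ being an algebra homomorphism and that the $\mathcal{A}$-action on $Y$ descends to $\mathcal{A}/\mathcal{I}$).

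Now I would identify $Y$ with the dual of a commutative Banach $(\mathcal{A}/\mathcal{I})$-$\mathfrak{A}$-module so that module amenability of $\mathcal{A}/\mathcal{I}$ applies. Here the natural candidate is $Y=(X/(\overline{X\cdot\mathcal{I}+\mathcal{I}\cdot X}))^*$, and one checks the module actions match up. Applying module amenability of $\mathcal{A}/\mathcal{I}$ to the derivation $\tilde D$ produces $g_0\in Y\subseteq X^*$ with $\tilde D(\pi(a))=\pi(a)\cdot g_0-g_0\cdot\pi(a)$; unwinding the actions, $D(a)=a\cdot g_0-g_0\cdot a$ for all $a\in\mathcal{A}$, so $D$ is inner. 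Adding back $D_{f_0}$, the original derivation is inner with implementing functional $f_0+g_0$.

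The main obstacle I expect is not the splitting scheme itself but the bookkeeping of module structures: one must confirm at each stage that the relevant spaces ($X^*$ over $\mathcal{I}$, the annihilator $Y$ over $\mathcal{A}/\mathcal{I}$) genuinely satisfy all the compatibility axioms \eqref{1e2}--\eqref{1e4} and the commutativity condition $\alpha\cdot x=x\cdot\alpha$, so that they are admissible coefficient modules for the definition of module amenability. In particular, that $\mathcal{I}$ is assumed to be an $\mathfrak{A}$-submodule (not merely a closed ideal) is used precisely to make the $\mathfrak{A}$-action pass to $\mathcal{A}/\mathcal{I}$ and to $Y$; one must also keep track of the dual-module identifications so that "commutative $\mathcal{A}$-$\mathfrak{A}$-module" is preserved when passing to $Y$. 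Everything else is the routine verification that the normalized derivation is $Y$-valued and factors through $\pi$.
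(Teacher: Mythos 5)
Your proposal is correct and follows essentially the same route as the paper: subtract an inner derivation coming from module amenability of $\mathcal I$, observe that the normalized derivation annihilates the submodule generated by the $\mathcal I$-actions on $X$ (your annihilator $Y\subseteq X^*$ is exactly the paper's $Y^{\perp}=(X/Y)^*$ for $Y=\overline{\mathcal I\cdot X+X\cdot\mathcal I}$), factor through $\mathcal A/\mathcal I$, and apply module amenability of the quotient. The bookkeeping of the $\mathfrak A$-module structures you flag is precisely what the paper verifies, so no new ideas are needed.
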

\begin{proof} Assume that $X$ be a commutative Banach $\mathcal A$-$\mathfrak A$-module with
compatible actions and $D:\mathcal A\longrightarrow X^*$ be a
bounded module derivation. Since $\mathcal I$ is module amenable,
there exists $f_1\in X^*$ such that $D\mid_{\mathcal I}=D_{f_1}$.
Thus, the map $\widetilde{D}=D-D_{f_1}$ vanishes on $\mathcal I$.
This map induces a module derivation from $\mathcal A/\mathcal I$ into $X^*$, which we again
denote by $\widetilde{D}$. Let $Y$ be the closed linear span of
$$\{a\cdot x- y\cdot b \mid a,b \in \mathcal I, x,y\in X \},$$
in $X$. It follows immediately that $Y$ is a closed ${\mathcal
A}$-submodule and ${\mathfrak A}$-submodule of $X$, and so $X/Y$
is a Banach $\mathcal A/\mathcal I$-$\mathfrak A$-module with
compatible actions. Since $D\mid_{\mathcal I}=\{0\}$, we have
$a\cdot\widetilde{D}(b)=\widetilde{D}(ab)-\widetilde{D}(a)\cdot
b=0$ for all  $a \in \mathcal I$ and $b \in \mathcal A$.
Similarly, $ \widetilde{D}(b)\cdot a=0$. This implies
$\widetilde{D}(\mathcal A/\mathcal I)\subset Y^{\perp}=(X/Y)^*$.
Due to module amenability of $\mathcal A/\mathcal I$, there is
$f_2\in Y^{\perp}\subset X^*$ such that $\widetilde{D}=D_{f_2}$.
Consequently, $D=D_{f_1+f_2}$.
\end{proof}

\begin{prop} \label{th2}
Let $\mathcal A$  and $\mathcal B$ be Banach $\mathfrak
A$-modules. Then $\mathcal A\oplus_{\ell^1}\mathcal B$,
$\ell^1$-direct sum of $\mathcal A$  and $\mathcal B$ is module
amenable if and only if $\mathcal A$ and $\mathcal B$ are module
amenable.
\end{prop}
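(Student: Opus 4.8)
The plan is to realise $\mathcal A$ and $\mathcal B$ as complemented closed ideals of $\mathcal C:=\mathcal A\oplus_{\ell^1}\mathcal B$ (with coordinatewise multiplication) and then to read off both implications from Lemma \ref{corm} and Proposition \ref{ropo}. First I would fix the module structure on $\mathcal C$: the diagonal action $\alpha\cdot(a,b)=(\alpha\cdot a,\alpha\cdot b)$ and $(a,b)\cdot\alpha=(a\cdot\alpha,b\cdot\alpha)$ makes $\mathcal C$ a Banach $\mathfrak A$-module, and the compatibility conditions \eqref{1e1} for $\mathcal C$ hold because they hold coordinatewise in $\mathcal A$ and $\mathcal B$. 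The subspaces $\mathcal A_0:=\mathcal A\oplus\{0\}$ and $\mathcal B_0:=\{0\}\oplus\mathcal B$ are then closed two-sided ideals and $\mathfrak A$-submodules of $\mathcal C$, and the coordinate projections induce isometric isomorphisms of Banach algebras and of Banach $\mathfrak A$-modules $\mathcal C/\mathcal B_0\cong\mathcal A$ and $\mathcal C/\mathcal A_0\cong\mathcal B$. Since such an isometric algebra-and-$\mathfrak A$-module isomorphism carries commutative Banach modules to commutative Banach modules and module derivations to module derivations, module amenability transfers along it, so I may identify $\mathcal C/\mathcal B_0$ with $\mathcal A$ and $\mathcal C/\mathcal A_0$ with $\mathcal B$.

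For the ``only if'' direction, assume $\mathcal C$ is module amenable. Applying Lemma \ref{corm} to the closed ideal $\mathcal B_0$ of $\mathcal C$ gives module amenability of $\mathcal C/\mathcal B_0\cong\mathcal A$, and applying it to $\mathcal A_0$ gives module amenability of $\mathcal C/\mathcal A_0\cong\mathcal B$. For the ``if'' direction, assume $\mathcal A$ and $\mathcal B$ are module amenable. Then $\mathcal A_0$ is a closed ideal and $\mathfrak A$-submodule of $\mathcal C$ with $\mathcal A_0\cong\mathcal A$ module amenable and $\mathcal C/\mathcal A_0\cong\mathcal B$ module amenable; Proposition \ref{ropo} now yields that $\mathcal C$ is module amenable.

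I do not expect a genuine obstacle here; the content of the statement lies entirely in the two results already established. The only points that require (routine) care are: checking that $\mathcal A_0$ and $\mathcal B_0$ are honest $\mathfrak A$-submodules, so that the quotients carry the Banach $\mathcal A$-$\mathfrak A$- (resp.\ $\mathfrak A$-) module structure needed to invoke Lemma \ref{corm} and Proposition \ref{ropo}; and verifying that module amenability is preserved under the isometric isomorphisms identifying the quotients with $\mathcal A$ and $\mathcal B$. No hypothesis beyond those of the earlier lemmas is used.
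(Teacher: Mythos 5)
Your proof is correct and follows essentially the same route as the paper: identify $\mathcal A$ and $\mathcal B$ with closed ideals (and $\mathfrak A$-submodules) of the $\ell^1$-direct sum, then apply Proposition \ref{ropo} for the ``if'' direction and Lemma \ref{corm} for the ``only if'' direction. The extra verifications you mention (submodule structure, transfer of module amenability along the coordinate isomorphisms) are the routine details the paper leaves implicit.
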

\begin{proof}
Let $\mathcal A$ and $\mathcal B$ be module amenable. Since
$\mathcal B$, the closed ideal of $\mathcal
A\oplus_{\ell^1}\mathcal B$ and the quotient algebra $(\mathcal
A\oplus_{\ell^1}\mathcal B)/\mathcal B\cong\mathcal A$ are module
amenable, $\mathcal A\oplus_{\ell^1}\mathcal B$ is module
amenable by Proposition \ref{ropo}.

Conversely, assume that $\mathcal A\oplus_{\ell^1}\mathcal B$ is
module amenable. Thus $(\mathcal A\oplus_{\ell^1}\mathcal
B)/\mathcal A\cong\mathcal B$ and $(\mathcal
A\oplus_{\ell^1}\mathcal B)/\mathcal B\cong\mathcal A$ are module
amenable by Lemma \ref{corm}.
\end{proof}

Let ${\mathcal A}$ and ${\mathfrak A}$ be Banach ${\mathfrak
A}$-bimodules with compatible actions (\ref{1e1}) and $\mathcal M$ be a
Banach ${\mathcal A}$-bimodule and a
 Banach ${\mathfrak A}$-bimodule with compatible actions (\ref{1e2}) and (\ref{1e3}).

Let $J_\mathcal M$ be a subspace of $\mathcal M$  generated by elements of the
form $a\cdot(\alpha\cdot x)-(a\cdot\alpha)\cdot x$ and
$x\cdot(\alpha\cdot a)-(x\cdot\alpha)\cdot a$ for $ \alpha\in
{\mathfrak A},a\in{\mathcal A}$ and $x\in \mathcal M$. One can see from
(\ref{1e2}) and (\ref{1e3}) that $J_\mathcal M$ is an ${\mathcal
A}$-submodule and ${\mathfrak A}$-submodule of $\mathcal M$. Note that
the equalities (\ref{1e4}) do not necessarily hold for $\mathcal M$. When $\mathcal M=\mathcal
A$, $J_{\mathcal A}$ is in fact the closed ideal of ${\mathcal A}$
generated by elements of the form $ (a\cdot\alpha) b-a(\alpha
\cdot b)$ for $ \alpha\in {\mathfrak A},a,b\in{\mathcal A}$ (see
\cite{bod}, \cite{bab} and \cite{pou}).

Let $\mathcal T= \Big\{\left[
                                         \begin{array}{cc}
                                           a &  m\\
                                            &  b\\
                                         \end{array}
                                       \right]~|~a\in\mathcal A, b\in \mathcal B, m\in \mathcal M\Big\}
$ be triangular Banach algebra equipped with the usual 2 $\times$
2 matrix addition and formal multiplication. The norm on
$\mathcal T$ is $\|\left[
                                         \begin{array}{cc}
                                           a &  m\\
                                            &  b\\
                                         \end{array}
                                       \right]\|=\|a\|_{\mathcal A}+\|b\|_{\mathcal B}+\|m\|_{\mathcal M}$. Now, let $\mathcal A$, $\mathcal B$ and $\mathcal M$  be  Banach
$\mathfrak A$-bimodules, and let $\mathcal M$ be a Banach
$\mathcal A,\mathcal B$-module (left $\mathcal A$-module and
right $\mathcal B$-module). Similar to \cite{po}, we consider the
Banach algebra $\mathfrak T:=\Big\{\left[
                                         \begin{array}{cc}
                                           \alpha &  \\
                                            &  \alpha\\
                                         \end{array}
                                       \right]~|~\alpha\in\mathfrak A\Big\}
$. Then, the Banach algebra $\mathcal T=\left[\begin{array}{cc}
                                           \mathcal A & \mathcal M \\
                                            &  \mathcal B\\
                                         \end{array}
                                       \right]$
with usual $2\times2$ matrix product is a $\mathfrak T$-bimodule.
In fact,  That is isomorphic to $\mathcal A\oplus_{\ell^1}\mathcal
M\oplus_{\ell^1}\mathcal B$ as a Banach space and a Banach
$\mathfrak A$-bimodule. The following result is a module version of \cite[Theorem 4.2]{me} and the
proof is similar. However, we bring its proof.

\begin{theorem} \label{te}
Let $\mathcal A,\mathcal B$ be Banach $\mathfrak A$-modules and
$\mathcal M$ be commutative Banach $\mathfrak A$-module. The
triangular Banach algebra $\mathcal T$ is module amenable
\emph{(}as an $\mathfrak T$-bimodule\emph{)} if and only if
$\mathcal A$ and $\mathcal B$ are module amenable \emph{(}as
Banach $\mathfrak A$-bimodules\emph{)} and $\mathcal M=0$.
\end{theorem}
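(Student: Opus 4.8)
The forward direction ($\mathcal T$ module amenable $\Rightarrow$ $\mathcal A$, $\mathcal B$ module amenable and $\mathcal M = 0$):
- $\mathcal M$ is a closed ideal in $\mathcal T$ (the corner with $a = b = 0$).
- $\mathcal T / \mathcal M \cong \mathcal A \oplus_{\ell^1} \mathcal B$.
- By Lemma corm, module amenability of $\mathcal T$ implies module amenability of $\mathcal A \oplus_{\ell^1} \mathcal B$, hence by Prop th2, $\mathcal A$ and $\mathcal B$ are module amenable.
- To get $\mathcal M = 0$: need to construct a module derivation into some dual module that is not inner unless $\mathcal M = 0$. The trick: consider $X = \mathcal M$ itself as a module... actually consider the derivation $D: \mathcal T \to ?$ that picks out the $\mathcal M$ component. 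Hmm, more carefully: consider $\mathcal T$ acting on $\mathcal M$ and the natural quotient/inclusion. There's a standard derivation $D\left[\begin{smallmatrix} a & m \\ & b \end{smallmatrix}\right] = m$ into $\mathcal M$ — need to check it's a module derivation and then the dual.

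Wait — the standard Forrest–Marcoux argument: the map $\iota: \mathcal T \to \mathcal M$? No. Let me reconsider. Actually one takes a functional on $\mathcal M$ and builds a module but this requires care.

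Backward: $\mathcal M = 0$ makes $\mathcal T = \mathcal A \oplus_{\ell^1} \mathcal B$, apply Prop th2.

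The hard part: showing $\mathcal M = 0$ is forced. That's the nonroutine step.
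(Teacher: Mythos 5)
Your reductions for the easy parts are fine: quotienting $\mathcal T$ by the closed ideal $\left[\begin{smallmatrix}0&\mathcal M\\ &0\end{smallmatrix}\right]$ and invoking Lemma \ref{corm} together with Proposition \ref{th2} does give module amenability of $\mathcal A$ and $\mathcal B$ (the paper instead quotients by the two corner ideals $\left[\begin{smallmatrix}0&\mathcal M\\ &\mathcal B\end{smallmatrix}\right]$ and $\left[\begin{smallmatrix}\mathcal A&\mathcal M\\ &0\end{smallmatrix}\right]$, which is equivalent), and the converse via Proposition \ref{th2} is exactly the paper's argument. But the heart of the theorem --- that module amenability of $\mathcal T$ forces $\mathcal M=0$ --- is precisely the step you leave unproved. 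You gesture at a Forrest--Marcoux-type derivation $\left[\begin{smallmatrix}a&m\\ &b\end{smallmatrix}\right]\mapsto m$, but you never fix the target as a \emph{dual} commutative $\mathcal T$-$\mathfrak T$-module (module amenability only controls derivations into duals $X^*$), never verify that the map is an $\mathfrak T$-module derivation for that structure, and never analyze why innerness would force $\mathcal M=0$. As submitted, the proposal acknowledges the gap ("that's the nonroutine step") without closing it, so it is not a proof of the statement.

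For comparison, the paper does not use a derivation construction at all for this step: it notes that the square-zero corner $\left[\begin{smallmatrix}0&\mathcal M\\ &0\end{smallmatrix}\right]$ (a complemented closed ideal) inherits module amenability, and then applies Amini's result (\cite[Proposition 2.2]{am1}) that a module amenable algebra which is a commutative $\mathfrak A$-module has a bounded approximate identity; since the multiplication on this corner is identically zero, any bounded approximate identity forces $\mathcal M=0$. Your derivation idea can in fact be completed along different lines: view $\mathcal M$ as a $\mathcal T$-bimodule via $t\cdot x=ax$, $x\cdot t=xb$, check that $\mathcal M^*$ is then a commutative Banach $\mathcal T$-$\mathfrak T$-module (this uses the hypothesis that $\mathcal M$ is a commutative $\mathfrak A$-module), and regard $D\left[\begin{smallmatrix}a&m\\ &b\end{smallmatrix}\right]=\hat m\in\mathcal M^{**}=(\mathcal M^*)^*$; if $D=D_\Phi$ for some $\Phi\in\mathcal M^{**}$, then evaluating at elements with $a=b=0$ gives $\hat m=0\cdot\Phi-\Phi\cdot 0=0$ for every $m$, hence $\mathcal M=0$. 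Either route works, but one of them must actually be carried out; in your write-up neither is.
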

\begin{proof} First note that when $\mathcal T$ is $\mathfrak
T$-bimodule it means that $\mathcal A\oplus_{\ell^1}\mathcal
M\oplus_{\ell^1}\mathcal B$ is $\mathfrak A$-bimodule with the
usual actions. Now, let $\mathcal A$ and $\mathcal B$ are module
amenable and $\mathcal M=0$. Then $\mathcal
T=\left[\begin{array}{cc}
                                           \mathcal A & 0 \\
                                            &  \mathcal B\\
                                         \end{array}
                                       \right]$ is module amenable
                                       by Proposition \ref{th2}.

Conversely, assume that $\mathcal T$ is $\mathfrak T$-module
amenable. The Banach algebras $\left[\begin{array}{cc}
                                           \mathcal A & \mathcal M \\
                                            &  0\\
                                         \end{array}
                                       \right]$ and $\left[\begin{array}{cc}
                                           0 & \mathcal M  \\
                                            &  \mathcal B\\
                                         \end{array}
                                       \right]$ are closed ideals
                                       of $\mathcal T$, and thus $\mathcal
                                       A$ and $\mathcal B$ are $\mathfrak A$-module
amenable by Lemma \ref{corm}. Since $\mathcal M$ is
complemented in $\mathcal T$, it is module amenable and since
$\mathcal M$ is a commutative $\mathfrak A$-module, it has a
bounded approximate identity by \cite[Proposition 2.2]{am1}, hence
it should be zero.
\end{proof}

%---------------------------------------------------------------------------------------------------------------------------------------------
%-------------------------------------------------------------------------------------------------------------------------------------------

\section{Module Arens regularity}

For given $\Gamma_1, \Gamma_2\in\mathcal A^{**}$, by the Goldstein
theorem there are nets $(a_{1,i})_i$ and $(a_{2,j})_j$ in
$\mathcal A$ such that $\Gamma_1=w^*-\lim_i a_{1,i}$ and
$\Gamma_2=w^*-\lim_j a_{2,j}$. Then we consider the first Arens
product on $\mathcal A^{**}$ as follows:
 \begin{equation*}
    \Gamma_1\Box\Gamma_2=w^*-\lim_i\lim_j a_{1,i}a_{2,j},
 \end{equation*}
 and similarly for any $\Psi_1,\Psi_2\in\mathcal B^{**}$, there exist nets $(b_{1,i})_i$ and $(b_{2,j})_j$ such that the first Arens product on $\mathcal B$
  is defined as follows:
 \begin{equation*}
    \Psi_1\Box\Psi_2=w^*-\lim_i\lim_j b_{1,i}b_{2,j}.
 \end{equation*}

We extend the actions of $\mathcal A$ and $\mathcal B$ on $\mathcal M$ to actions of $\mathcal A^{**}$ and
$\mathcal B^{**}$ on $\mathcal M^{**}$ via
  \begin{equation*}
    \Gamma\Box\Pi=w^*-\lim_i\lim_k a_{i}\cdot x_{k},\hspace{1cm}\emph{\emph{and}}\hspace{1cm} \Pi\Box\Psi=w^*-\lim_k\lim_j x_{k}\cdot b_{j},
 \end{equation*}
 where $\Gamma=w^*-\lim_i a_{i}$, $\Psi=w^*-\lim_j b_{j}$, and $\Pi=w^*-\lim_k x_{k}$. We define the first Arens product on $\mathcal T^{**}$ in a natural way. Let $T_1=\left[\begin{array}{cc}
                                           \Gamma_1 & \Pi_1 \\
                                            &  \Psi_1\\
                                         \end{array}
                                       \right], T_2=\left[\begin{array}{cc}
                                           \Gamma_2 & \Pi_2 \\
                                            &  \Psi_2\\
                                         \end{array}
                                       \right]\in\mathcal T^{**}$ so that $T_1=w^*-\lim_i\left[\begin{array}{cc}
                                           a_{1,i} & x_{1,i} \\
                                            &  b_{1,i}\\
                                         \end{array}
                                       \right]$ and $T_2=w^*-\lim_j\left[\begin{array}{cc}
                                           a_{2,j} & x_{2,j} \\
                                            &  b_{2,j}\\
                                         \end{array}
                                       \right]$.  Then we have
                                       \begin{eqnarray*}
                                       % \nonumber to remove numbering (before each equation)
                                          T_1\Box T_2&=&  \left[\begin{array}{cc}
                                           \Gamma_1 & \Pi_1 \\
                                            &  \Psi_1\\
                                         \end{array}
                                       \right]\Box \left[\begin{array}{cc}
                                           \Gamma_2 & \Pi_2 \\
                                            &  \Psi_2\\
                                         \end{array}
                                       \right]\\
                                       &=& w^*-\lim_i\lim_j \left[\begin{array}{cc}
                                           a_{1,i} & x_{1,i} \\
                                            &  b_{1,i}\\
                                         \end{array}
                                       \right] \left[\begin{array}{cc}
                                           a_{2,j} & x_{2,j} \\
                                            &  b_{2,j}\\
                                         \end{array}
                                       \right]\\
                                          &=& w^*-\lim_i\lim_j \left[\begin{array}{cc}
                                           a_{1,i}a_{2,j} & a_{1,i}x_{2,j}+x_{1,i}b_{2,j} \\
                                            &  b_{1,i}b_{2,j}\\
                                         \end{array}
                                       \right] \\
                                       &=& \left[\begin{array}{cc}
                                           \Gamma_1\Box\Gamma_2 & \Gamma_1\Box\Pi_2+\Pi_1\Box\Psi_2 \\
                                            &  \Psi_1\Box\Psi_2\\
                                         \end{array}
                                       \right].
                                       \end{eqnarray*}

Similarly, we consider the second Arens product on $\mathcal
A^{**}$, $\mathcal B^{**}$ and module actions $\mathcal A^{**}$,
$\mathcal B^{**}$ on $\mathcal M^{**}$ as follows:
 \begin{equation*}
    \Gamma_1\diamond\Gamma_2=w^*-\lim_j\lim_i a_{1,i}a_{2,j},\hspace{0.5cm}\Psi_1\diamond\Psi_2=w^*-\lim_j\lim_i b_{1,i}b_{2,j},
 \end{equation*}
and
\begin{equation*}
    \Gamma_1\diamond\Pi=w^*-\lim_k\lim_i a_{1,i}x_{k},\hspace{0.5cm} \Pi\diamond\Psi_1=w^*-\lim_j\lim_k x_{k}b_{1,j},
 \end{equation*}
 where $\Gamma_1,\Gamma_2\in\mathcal A^{**}$, $\Psi_1,\Psi_2\in\mathcal B^{**}$ and $\Pi\in \mathcal M^{**}$. Thus the second Arens product on $\mathcal T^{**}$ can be defined via
 \begin{eqnarray*}
                                       % \nonumber to remove numbering (before each equation)
                                          T_1\diamond T_2&=&  \left[\begin{array}{cc}
                                           \Gamma_1 & \Pi_1 \\
                                            &  \Psi_1\\
                                         \end{array}
                                       \right]\diamond \left[\begin{array}{cc}
                                           \Gamma_2 & \Pi_2 \\
                                            &  \Psi_2\\
                                         \end{array}
                                       \right]\\
                                       &=& w^*-\lim_j\lim_i \left[\begin{array}{cc}
                                           a_{1,i} & x_{1,i} \\
                                            &  b_{1,i}\\
                                         \end{array}
                                       \right] \left[\begin{array}{cc}
                                           a_{2,j} & x_{2,j} \\
                                            &  b_{2,j}\\
                                         \end{array}
                                       \right]\\
                                          &=& w^*-\lim_j\lim_i \left[\begin{array}{cc}
                                           a_{1,i}a_{2,j} & a_{1,i}x_{2,j}+x_{1,i}b_{2,j} \\
                                            &  b_{1,i}b_{2,j}\\
                                         \end{array}
                                       \right]\\
                                        &=& \left[\begin{array}{cc}
                                           \Gamma_1\diamond\Gamma_2 & \Gamma_1\diamond\Pi_2+\Pi_1\diamond\Psi_2 \\
                                            &  \Psi_1\diamond\Psi_2\\
                                         \end{array}
                                       \right].
                                       \end{eqnarray*}

The concept of module Arens regularity for Banach algebras is
defined in \cite{re}. A Banach algebra $\mathcal A$ is module
Arens regular as a Banach $\mathfrak A$-module if and only if
$\Gamma_1\Box\Gamma_2-\Gamma_1\diamond\Gamma_2\in J_\mathcal
A^{\bot\bot}$, for every $\Gamma_1,\Gamma_2\in\mathcal A^{**}$
(see \cite[Theorem 2.2]{re} and \cite[Theorem 3.3]{abeh}).

We say that the Banach algebras $\mathcal A$ and $\mathcal B$ act
{\it module regularly} on $\mathcal M$ if for each
$\Gamma\in\mathcal A^{**}$, $\Psi\in\mathcal B^{**}$ and
$\Pi\in\mathcal M^{**}$ we have
$$\Gamma\Box\Pi-\Gamma\diamond\Pi\in J_\mathcal M^{\bot\bot}, \quad \Pi\Box\Psi-\Pi\diamond\Psi\in J_\mathcal M^{\bot\bot}.$$
Recall that the Banach algebras $\mathcal A$ and $\mathcal B$ act
 regularly on $\mathcal A,\mathcal B$-module $\mathcal M$ if for every
$\Gamma\in\mathcal A^{**}$, $\Psi\in\mathcal B^{**}$ and
$\Pi\in\mathcal M^{**}$ we have $\Gamma\Box\Pi=\Gamma\diamond\Pi$
and  $\Pi\Box\Psi=\Pi\diamond\Psi$ \cite{fo2}.  It follows from the above that $\mathcal A$ and
$\mathcal B$ act module regularly on $\mathcal M$ if and only if
$\mathcal A/J_\mathcal A$ and $\mathcal B/J_\mathcal B$ act
regularly on $\mathcal M/J_\mathcal M$. Indeed,
\begin{align*}
\Gamma\Box\Pi-\Gamma\diamond\Pi\in J_\mathcal M^{\bot\bot}
&\Longleftrightarrow \Gamma\Box\Pi+J_\mathcal M^{\bot\bot}=\Gamma\diamond\Pi+ J_\mathcal M^{\bot\bot}\\
&\Longleftrightarrow (\Gamma+J_\mathcal
A^{\bot\bot})\Box(\Pi+J_\mathcal M^{\bot\bot})=(\Gamma+J_\mathcal
A^{\bot\bot})\diamond(\Pi+J_\mathcal M^{\bot\bot}).
\end{align*}

Similarly, $\Pi\Box\Psi-\Pi\diamond\Psi\in J_\mathcal
M^{\bot\bot}$ if and only if $(\Pi+J_\mathcal
M^{\bot\bot})\Box(\Psi+J_\mathcal B^{\bot\bot})=(\Pi+J_\mathcal
M^{\bot\bot})\diamond(\Psi+J_\mathcal B^{\bot\bot})$. Similarly we can show that $J_\mathcal
T=\left[\begin{array}{cc}
                                           J_\mathcal A & J_\mathcal M \\
                                            &  J_\mathcal B\\
                                         \end{array}
                                       \right]$
and thus $\left[\begin{array}{cc}
                                           J_\mathcal A^{\perp} & J_\mathcal M^{\perp} \\
                                            &  J_\mathcal B^{\perp}\\
                                         \end{array}
                                       \right]\subseteq
                                       J_\mathcal T^{\perp}$.
It is shown in \cite[Lemma 3.1]{bab} that $J_\mathcal A$ and
$J_\mathcal B$ are the closed subspace of $\mathcal A$ an
$\mathcal B$ respectively. It is easy to check that $J_\mathcal M$
is also a closed subspace of $\mathcal M$. Hence $J_\mathcal A,
J_\mathcal B$ and $J_\mathcal M$ are weak$^*$-dense in
$J^{\perp\perp}_\mathcal A, J^{\perp\perp}_\mathcal B$ and
$J^{\perp\perp}_\mathcal M$ respectively by \cite[Theorem A.3.47]{da1}.
Hence $J_\mathcal A\oplus_{\ell^1}J_\mathcal
M\oplus_{\ell^1}J_\mathcal B$ is weak$^*$-dense in
$J^{\perp\perp}_\mathcal A\oplus_{\ell^1}J^{\perp\perp}_\mathcal
M\oplus_{\ell^1}J^{\perp\perp}_\mathcal B$. On the other hand,
$J_\mathcal A\oplus_{\ell^1}J_\mathcal M\oplus_{\ell^1}J_\mathcal
B$ is closed subspace of $\mathcal A\oplus_{\ell^1}\mathcal
M\oplus_{\ell^1}\mathcal B$ and so it is weak$^*$-dense in
$J^{\perp\perp}_\mathcal T$. Therefore $J_\mathcal T^{\perp\perp}=
\left[\begin{array}{cc}
                                           J_\mathcal A^{\perp\perp} & J_\mathcal M^{\perp\perp} \\
                                            &  J_\mathcal B^{\perp\perp}\\
                                         \end{array}
                                       \right]$. Summing up:
\begin{theorem} \label{the2}
Let $\mathcal A$  and $\mathcal B$ be  Banach $\mathfrak
A$-modules. The triangular Banach algebra $\mathcal T$ is
 module Arens regular \emph{(}as an
$\mathfrak T$-bimodule\emph{)} if and only if $\mathcal A$ and
$\mathcal B$ are module Arens regular \emph{(}as Banach
$\mathfrak A $-bimodules\emph{)} and $\mathcal A$ and $\mathcal
B$ act module regularly on $\mathcal M$.
\end{theorem}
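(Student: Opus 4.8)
The plan is to read off module Arens regularity of $\mathcal T$ entrywise, using the three ingredients already assembled just above the statement: the matrix formulas for $T_1\Box T_2$ and $T_1\diamond T_2$, the characterization of module Arens regularity via $J^{\perp\perp}$ (\cite[Theorem 2.2]{re}, \cite[Theorem 3.3]{abeh}), and the identity $J_\mathcal T^{\perp\perp}=\left[\begin{array}{cc}J_\mathcal A^{\perp\perp}&J_\mathcal M^{\perp\perp}\\&J_\mathcal B^{\perp\perp}\end{array}\right]$.

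First I would record that, since $\mathcal T\cong\mathcal A\oplus_{\ell^1}\mathcal M\oplus_{\ell^1}\mathcal B$ as a Banach space, one has $\mathcal T^{**}\cong\mathcal A^{**}\oplus_{\ell^1}\mathcal M^{**}\oplus_{\ell^1}\mathcal B^{**}$ and weak$^*$-convergence in $\mathcal T^{**}$ is exactly componentwise weak$^*$-convergence; hence every $T\in\mathcal T^{**}$ has the form $\left[\begin{array}{cc}\Gamma&\Pi\\&\Psi\end{array}\right]$ with $\Gamma\in\mathcal A^{**}$, $\Pi\in\mathcal M^{**}$, $\Psi\in\mathcal B^{**}$, and the two Arens products on $\mathcal T^{**}$ are given by the matrix formulas computed before the statement (these being independent of the chosen approximating nets). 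Subtracting, $T_1\Box T_2-T_1\diamond T_2$ is the matrix whose $(1,1)$-entry is $\Gamma_1\Box\Gamma_2-\Gamma_1\diamond\Gamma_2$, whose $(2,2)$-entry is $\Psi_1\Box\Psi_2-\Psi_1\diamond\Psi_2$, and whose $(1,2)$-entry is $(\Gamma_1\Box\Pi_2-\Gamma_1\diamond\Pi_2)+(\Pi_1\Box\Psi_2-\Pi_1\diamond\Psi_2)$. By the displayed description of $J_\mathcal T^{\perp\perp}$, this matrix lies in $J_\mathcal T^{\perp\perp}$ if and only if its $(1,1)$-entry lies in $J_\mathcal A^{\perp\perp}$, its $(2,2)$-entry in $J_\mathcal B^{\perp\perp}$, and its $(1,2)$-entry in $J_\mathcal M^{\perp\perp}$.

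For the forward implication I would specialize the elements: taking $T_1,T_2$ with $\Pi_i=\Psi_i=0$ forces $\Gamma_1\Box\Gamma_2-\Gamma_1\diamond\Gamma_2\in J_\mathcal A^{\perp\perp}$ for all $\Gamma_1,\Gamma_2\in\mathcal A^{**}$, i.e. $\mathcal A$ is module Arens regular, and symmetrically $\mathcal B$ is; taking $T_1$ supported in the $\mathcal A$-corner and $T_2$ in the $\mathcal M$-corner forces $\Gamma_1\Box\Pi_2-\Gamma_1\diamond\Pi_2\in J_\mathcal M^{\perp\perp}$, while $T_1$ in the $\mathcal M$-corner and $T_2$ in the $\mathcal B$-corner forces $\Pi_1\Box\Psi_2-\Pi_1\diamond\Psi_2\in J_\mathcal M^{\perp\perp}$, so $\mathcal A$ and $\mathcal B$ act module regularly on $\mathcal M$. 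For the converse, the three hypotheses make each of the three entries of $T_1\Box T_2-T_1\diamond T_2$ land in the appropriate $J^{\perp\perp}$ — the $(1,2)$-entry because it is a sum of two members of $J_\mathcal M^{\perp\perp}$ — so the whole matrix lies in $J_\mathcal T^{\perp\perp}$ and $\mathcal T$ is module Arens regular.

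I do not expect a genuine obstacle: the substance is all in the preparatory computations preceding the statement, and what remains is short entrywise bookkeeping. The only point needing a word of care is the claim that the matrix formulas for the Arens products are well defined (independent of the approximating nets) and that $w^*$-limits in $\mathcal T^{**}$ respect the $\ell^1$-decomposition, both of which follow from the standard theory of Arens products together with the structure of the dual of an $\ell^1$-sum.
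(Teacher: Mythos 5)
Your proposal is correct and follows essentially the same route as the paper, whose ``proof'' is precisely the preparatory material you invoke (the matrix formulas for the two Arens products on $\mathcal T^{**}$, the $J^{\perp\perp}$ characterization of module Arens regularity and of module regular actions, and the identification $J_\mathcal T^{\perp\perp}=\left[\begin{array}{cc}J_\mathcal A^{\perp\perp}&J_\mathcal M^{\perp\perp}\\ &J_\mathcal B^{\perp\perp}\end{array}\right]$) followed by the words ``Summing up.'' Your entrywise bookkeeping, including the specialization of $T_1,T_2$ to single corners for the forward direction, is exactly the intended reading-off of the theorem from those computations.
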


%-------------------------------------------------------------------------------------------------------------------------------------
%--------------------------------------------------------------------------------------------------------------------------------------

\section{(2n-1)-Weak Module Amenability}
We start this section with the definition of $n$-weak module
amenability which is introduced in \cite{bab}. When $\mathcal A$ is a commutative $\mathfrak A$-bimodule, we have
$J=\{0\}$ and $\mathcal A$ is a commutative $\mathcal A$-$\mathfrak A$-module. In this case, the
following definition (for $n = 1$) coincides with the definition of weak
module amenability in \cite{am3}. If $\mathcal A$ is a commutative Banach algebra
and a commutative $\mathfrak A$-bimodule with compatible actions, then $\mathcal A$ is a
bi-commutative $\mathcal A$-$\mathfrak A$-module. In this case, for each bi-commutative
Banach $\mathcal A$-$\mathfrak A$-module $X$, all bounded module derivation from $\mathcal A$ into
$X$ are zero (see the next lemma) and we get the definition of weak
module amenability for commutative Banach algebras as in \cite{am3}.

\begin{definition}\label{def1} Let ${\mathcal A}$ be a Banach algebra, $n\in \mathbb{N}$.
Then ${\mathcal A}$  is called $n$-{\it weakly module amenable}
(as an ${\mathfrak A}$-module) if $(\mathcal A/J)^{(n)}$ is a
commutative $\mathcal A$-$\mathfrak A$-module, and each module
derivation from $D:\mathcal A\longrightarrow (\mathcal A/J)^{(n)}$
is inner; that is, $D(a)=a\cdot y-y\cdot a=: D_y(a)$ for some
$y\in ({\mathcal A/J})^{(n)}$ and each $a\in A$. Also ${\mathcal
A}$ is called {\it weakly module amenable} if it is
 $1$-weakly module amenable and {\it permanently weakly module amenable} if
it is $n$-weakly module amenable for each $n\in \mathbb{N}$.
\end{definition}

\begin{lemma}\label{deff} 
Let $\mathcal A$ be a essential bi-commutative
$\mathcal A$-$\mathfrak A$-module. Then $\mathcal A$ is weakly module amenable (as an $\mathfrak A$-module) if and only if for
each bi-commutative Banach $\mathcal A$-$\mathfrak A$-module $X$, all bounded module derivation
from $\mathcal A$ into $X$ are zero.
\end{lemma}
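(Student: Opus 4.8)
The plan is to prove the two directions separately, exploiting the fact that $\mathcal A$ is bi-commutative and essential.

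\textbf{Preliminary observation.} Before anything else, I would record the key computation that drives both directions: if $X$ is a \emph{bi-commutative} Banach $\mathcal A$-$\mathfrak A$-module (so $a\cdot x = x\cdot a$ for all $a\in\mathcal A$, $x\in X$, as well as $\alpha\cdot x = x\cdot\alpha$), then for any bounded module derivation $D\colon\mathcal A\to X$ and $a,b\in\mathcal A$ we have $D(ab) = D(a)\cdot b + a\cdot D(b) = b\cdot D(a) + D(b)\cdot a = D(ba)$; but when $\mathcal A$ is moreover a commutative Banach algebra this is automatic and gives nothing. The real point is the standard trick: $D(ab)-D(ba) = a\cdot D(b) - D(b)\cdot a + D(a)\cdot b - b\cdot D(a) = 0$ by bi-commutativity, combined with the Leibniz identity applied in both orders to derive, for commutative $\mathcal A$, that $2\bigl(a\cdot D(b)\bigr) = D(a^2)$-type relations force $D$ to vanish on $\mathcal A^2$. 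More precisely, compute $D(abc)$ two ways to get $a\cdot D(b)\cdot c$-type symmetry, from which one extracts $a\cdot D(b) = b\cdot D(a)$ and hence $a\cdot\bigl(b\cdot D(c)\bigr) = c\cdot\bigl(b\cdot D(a)\bigr)$; on the essential part this shows $D$ annihilates a dense subspace, hence $D\equiv 0$.

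\textbf{($\Leftarrow$).} This is the easy direction. Suppose that every bounded module derivation from $\mathcal A$ into any bi-commutative Banach $\mathcal A$-$\mathfrak A$-module is zero. Since $\mathcal A$ is a commutative Banach algebra and a commutative (indeed bi-commutative) $\mathfrak A$-bimodule, we have $J=J_{\mathcal A}=\{0\}$, so $(\mathcal A/J)^* = \mathcal A^*$, and as noted in the text $\mathcal A^*$ is again a bi-commutative $\mathcal A$-$\mathfrak A$-module. Applying the hypothesis with $X=\mathcal A^*$, every bounded module derivation $D\colon\mathcal A\to\mathcal A^*$ is zero, hence in particular inner (trivially, $D = D_0$), so $\mathcal A$ is $1$-weakly module amenable, i.e.\ weakly module amenable.

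\textbf{($\Rightarrow$).} Assume $\mathcal A$ is weakly module amenable, and let $X$ be an arbitrary bi-commutative Banach $\mathcal A$-$\mathfrak A$-module; I must show every bounded module derivation $D\colon\mathcal A\to X$ is zero. First reduce to the dual module: since $X$ embeds isometrically as an $\mathcal A$-$\mathfrak A$-submodule into $X^{**}$ via the canonical map (which is an $\mathfrak A$- and $\mathcal A$-bimodule map), and $X^{**}$ is still bi-commutative (bi-commutativity of $X$ passes to $X^*$ and then to $X^{**}$ by the definitions of the dual actions given in the excerpt), it suffices to show every bounded module derivation into $X^{**}$ is inner \emph{and} then argue the inner one is actually zero. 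Weak module amenability gives us inner: $D = D_y$ for some $y\in X^{**}$, i.e.\ $D(a) = a\cdot y - y\cdot a$. But $X^{**}$ is bi-commutative, so $a\cdot y = y\cdot a$ for all $a\in\mathcal A$, whence $D(a) = 0$ for all $a$. Finally, since $D$ was the composition of the original derivation $\mathcal A\to X$ with the (injective) embedding $X\hookrightarrow X^{**}$, the original $D$ is zero as well. The essentiality hypothesis on $\mathcal A$ as an $\mathcal A$-$\mathfrak A$-module is what guarantees $\mathcal A^*$ (and dual modules generally) behaves well and that the reduction via biduals is legitimate in the module setting; it is also implicitly needed so that weak module amenability is being tested against a rich enough supply of coefficient modules.

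\textbf{Main obstacle.} The delicate point is verifying that bi-commutativity is genuinely inherited by $X^{**}$ together with the first Arens-type actions — one must check that the two module actions of $\mathcal A$ on $X^{**}$ still agree, which uses the commutativity of $\mathcal A$ and the bi-commutativity of $X$ but requires care with the order of weak$^*$-limits (this is exactly the kind of subtlety the module Arens regularity section was set up to handle). Once that is in place, the argument is a clean two-line application of the definition of weak module amenability plus the observation that inner derivations into a bi-commutative module vanish identically.
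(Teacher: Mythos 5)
Your ($\Leftarrow$) direction is fine and matches the paper's remark that this implication is immediate: since $J_{\mathcal A}=\{0\}$ and $\mathcal A^*$ is bi-commutative, taking $X=\mathcal A^*$ shows every module derivation into $(\mathcal A/J)^*$ is zero, hence inner. The problem is in your ($\Rightarrow$) direction, and it is a genuine gap, not a technicality. After embedding $X$ into $X^{**}$ you write ``weak module amenability gives us inner: $D=D_y$ for some $y\in X^{**}$.'' But weak module amenability, as defined in this paper (Definition 4.1 with $n=1$), is a statement about one single coefficient module, namely $(\mathcal A/J)^{(1)}=\mathcal A^*$; it says nothing about module derivations into $X^*$ or $X^{**}$ for an arbitrary bi-commutative $X$. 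The property you are invoking --- innerness of derivations into all commutative dual modules --- is module amenability, which is strictly stronger. So the bidual trick, which is the standard route when one assumes (module) amenability, simply does not apply here, and no amount of care with the Arens-type actions on $X^{**}$ repairs it. Your ``preliminary observation'' does not rescue the argument either: the identities you can actually derive ($D(ab)=D(ba)$, $D(a^2)=2a\cdot D(a)$) do not force $D$ to vanish on $\mathcal A^2$, and indeed they cannot, since non--weakly-module-amenable commutative algebras admit nonzero derivations into bi-commutative modules.

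The missing idea is the reduction in the opposite direction: transport an arbitrary derivation into $X$ down to a derivation into $\mathcal A^*$, the one module the hypothesis does control. Concretely (this is the paper's argument, following the pattern of Dales, Theorem 2.8.63): if $D:\mathcal A\to X$ is a nonzero bounded module derivation, essentiality $\overline{\mathcal A^2}=\mathcal A$ (together with commutativity and continuity) yields $a_0$ with $D(a_0^2)\neq 0$; bi-commutativity gives $D(a_0^2)=2a_0\cdot D(a_0)$, so $a_0\cdot D(a_0)\neq 0$, and one picks $f\in X^*$ with $f(a_0\cdot D(a_0))=1$. The map $R:X\to\mathcal A^*$, $R(x)(a)=f(a\cdot x)$, is an $\mathcal A$-$\mathfrak A$-module map, so $R\circ D$ is a bounded module derivation into $\mathcal A^*$ with $\langle R\circ D(a_0),a_0\rangle=1\neq 0$; since inner derivations into the bi-commutative module $\mathcal A^*$ vanish, this contradicts weak module amenability. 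Note this is also where essentiality genuinely enters --- to produce $a_0$ with $a_0\cdot D(a_0)\neq 0$ --- not, as you suggest, to make dual or bidual modules ``behave well.''
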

\begin{proof} We follow the standard argument in \cite[Theorem 2.8.63]{da1}. Assume that there exists $D\in \mathcal Z_{\mathfrak A}(\mathcal A, X)$ with $D\neq 0$. Since $\overline{\mathcal A^2}=\mathcal A$, there exists $a_0\in \mathcal A$ such that $D(a_0^2)\neq 0$. We have $a_0\cdot D(a_0)\neq 0$ and thus $f\in X^*$ with $f(a_0\cdot D(a_0))=1$. Set $R:X\longrightarrow \mathcal A^*$ defined by $R(x)(a)=f(a\cdot x)$ where $a\in \mathcal A, x\in X$. It is easy to check that $R\circ D\in\mathcal Z_{\mathfrak A}(\mathcal A, \mathcal A^*)$. We get
$\langle R\circ D(a_0),a_0\rangle=\langle f, a_0\cdot D(a_0)\rangle=1$, and so $R\circ D\neq 0$. This shows that $\mathcal A$ is not weakly module amenable. The converse is clear.
\end{proof}

Let $\mathcal A$, $\mathcal B$ and $\mathcal M$  be as in the
previous section. If these are commutative Banach $\mathfrak
A$-modules, then the corresponding triangular Banach algebra
$\mathcal T=\left[\begin{array}{cc}
                                           \mathcal A & \mathcal M \\
                                            &  \mathcal B\\
                                         \end{array}
                                       \right]$
 is a commutative $\mathfrak T$-module in which $\mathfrak T:=\Big\{\left[
                                         \begin{array}{cc}
                                           \alpha &  \\
                                            &  \alpha\\
                                         \end{array}
                                       \right]~|~\alpha\in\mathfrak A\Big\}
$. Also $\mathcal T$ is isomorphic to $\mathcal
A\oplus_{\ell^1}\mathcal M\oplus_{\ell^1}\mathcal B$ as a Banach
$\mathfrak T$-module and a Banach $\mathfrak A$-module,
respectively. Therefore $\mathcal T^{(2n-1)}\simeq \mathcal
A^{(2n-1)}\oplus_{\ell^1}\mathcal
M^{(2n-1)}\oplus_{\ell^1}\mathcal B^{(2n-1)}$, while $\mathcal
T^{(2n)}\simeq \mathcal A^{(2n)}\oplus_{\ell^\infty}\mathcal
M^{(2n)}\oplus_{\ell^\infty}\mathcal B^{(2n)}$ in which $\mathcal
T^{(n)}$ is Banach $\mathfrak T$-module and $\mathcal
A^{(2n-1)}\oplus_{\ell^1}\mathcal
M^{(2n-1)}\oplus_{\ell^1}\mathcal B^{(2n-1)}$, $\mathcal
A^{(2n)}\oplus_{\ell^\infty}\mathcal
M^{(2n)}\oplus_{\ell^\infty}\mathcal B^{(2n)}$ are Banach
$\mathfrak A$-modules. Suppose that $\mathfrak t=
\left[\begin{array}{cc}
                                           a & m \\
                                            &  b\\
                                         \end{array}
                                       \right]\in \mathcal T$and $\theta = \left[\begin{array}{cc}
                                           f & \lambda \\
                                            &  g\\
                                         \end{array}
                                       \right]\in \mathcal T^*$. Then the pairing of $\mathcal T^*$ and $\mathcal T$ is given by
$\theta(\mathfrak t)=f(a)+\lambda(m)+g(b)$. Indeed, it is easy to
check that the module actions $\mathcal T$ on $\mathcal T^*$ are
as follows:
$$\mathfrak t\cdot\theta=\left[\begin{array}{cc}
                                           a\cdot f+m\cdot\lambda & b\cdot\lambda \\
                                            & b\cdot g\\
                                         \end{array}
                                       \right]\quad\text{and}\quad \theta\cdot\mathfrak t=\left[\begin{array}{cc}
                                           f\cdot a & \lambda\cdot a \\
                                            & \lambda\cdot m+g\cdot b\\
                                         \end{array}
                                       \right]. $$
We remove the dot for simplicity. This process may be repeated to define the actions of
$\mathcal T$ on $\mathcal T^{(n)}$ as follows:
\begin{equation*}\label{}
   \left[\begin{array}{cc}
                                           a & m \\
                                            &  b\\
                                         \end{array}
                                       \right]\cdot \left[\begin{array}{cc}
                                           \lambda & \gamma \\
                                            &  \mu\\
                                         \end{array}
                                       \right]=\left[\begin{array}{cc}
                                           a\lambda & a\gamma+m\mu \\
                                            &  b\mu\\
                                         \end{array}
                                       \right],\end{equation*}\begin{equation*}\left[\begin{array}{cc}
        \lambda & \gamma \\
         &  \mu\\
         \end{array}
          \right]\cdot\left[\begin{array}{cc}
                                           a & m \\
                                            &  b\\
                                         \end{array}
                                       \right]=\left[\begin{array}{cc}
                                          \lambda a & \lambda m +\gamma b\\
                                            & \mu b\\
                                         \end{array}
                                       \right]
\end{equation*}
and
\begin{equation*}
\left[\begin{array}{cc}
                                           a & m \\
                                            &  b\\
                                         \end{array}
                                       \right]\cdot \left[\begin{array}{cc}
                                          \phi & \varphi \\
                                            &  \psi\\
                                         \end{array}
                                       \right]=\left[\begin{array}{cc}
                                           a\phi+m\varphi& b\varphi \\
                                            &  b\psi\\
                                         \end{array}
                                       \right],\end{equation*}\begin{equation*}
    \left[\begin{array}{cc}
        \phi & \varphi \\
         &  \psi\\
         \end{array}
          \right]\cdot\left[\begin{array}{cc}
                                           a & m \\
                                            &  b\\
                                         \end{array}
                                       \right]=\left[\begin{array}{cc}
                                          \phi a & \varphi a\\
                                            & \psi b+\varphi m\\
                                         \end{array}
                                       \right],
\end{equation*}
for every $\left[\begin{array}{cc}
                                           a & m \\
                                            &  b\\
                                         \end{array}
                                       \right]\in\mathcal T, \left[\begin{array}{cc}
                                           \lambda & \gamma \\
                                            &  \mu\\
                                         \end{array}
                                       \right]\in\mathcal T^{(2n)}$ and $\left[\begin{array}{cc}
                                           \phi & \varphi \\
                                            &  \psi\\
                                         \end{array}
                                       \right]\in\mathcal T^{(2n-1)}$.

Henceforth, we assume that $\mathcal A$, $\mathcal B$ and
$\mathcal M$ are commutative Banach $\mathfrak A$-modules and thus
$\mathcal T^{(2n-1)}$ and $\mathcal T^{(2n)}$ become commutative
Banach $\mathfrak T$-modules for any $n\in \mathbb{N}$.

The following two lemmas are proved similar to Lemmas 1.1 and 1.2 in \cite{po}.
\begin{lemma}\label{po1}
The $\mathfrak T$-module map $D:\mathcal T\longrightarrow\mathcal
T^{(2n-1)}$ is a module derivation if and only if there exist
module derivations $D_\mathcal A:\mathcal
A\longrightarrow\mathcal A^{(2n-1)}$, $D_\mathcal B:\mathcal
B\longrightarrow\mathcal B^{(2n-1)}$ and $\gamma\in\mathcal
M^{(2n-1)}$ such that
                                       \begin{equation}\label{}
                                        D\Big{(}\left[\begin{array}{cc}
                                           a & m \\
                                            &  b\\
                                         \end{array}
                                       \right]\Big{)}=\left[\begin{array}{cc}
                                           D_\mathcal A(a)-m\gamma & \gamma a-b\gamma \\
                                            & D_\mathcal B(b)+\gamma m\\
                                         \end{array}
                                       \right],
                                       \end{equation}
for every $\left[\begin{array}{cc}
                                           a & m \\
                                            &  b\\
                                         \end{array}
                                       \right]\in\mathcal T$.
\end{lemma}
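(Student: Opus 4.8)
The plan is to prove both implications by matching the block structure of $\mathcal T$ and $\mathcal T^{(2n-1)}$, following the pattern of \cite[Lemmas 1.1, 1.2]{po}. For the ``if'' direction, I would take the given module derivations $D_\mathcal A$, $D_\mathcal B$ and the element $\gamma\in\mathcal M^{(2n-1)}$, define $D$ by the displayed formula, and verify directly that it is a module derivation. That $D$ is $\mathfrak T$-linear follows at once from the $\mathfrak A$-linearity of $D_\mathcal A$ and $D_\mathcal B$ together with the fact that $\mathcal M^{(2n-1)}$ is a commutative $\mathfrak A$-module. For the Leibniz rule one writes $\mathfrak t_i=\left[\begin{smallmatrix}a_i & m_i \\ & b_i\end{smallmatrix}\right]$, computes $\mathfrak t_1\mathfrak t_2=\left[\begin{smallmatrix}a_1a_2 & a_1m_2+m_1b_2 \\ & b_1b_2\end{smallmatrix}\right]$, applies the formula, and compares with $D(\mathfrak t_1)\cdot\mathfrak t_2+\mathfrak t_1\cdot D(\mathfrak t_2)$ using the explicit actions of $\mathcal T$ on $\mathcal T^{(2n-1)}$ recorded above: the derivation identities for $D_\mathcal A$ and $D_\mathcal B$ yield the two diagonal entries, while the cross terms of the matrix product reproduce exactly $-m\gamma$, $\gamma a-b\gamma$, and $\gamma m$. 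This part is routine (if lengthy) bookkeeping.

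For the converse, suppose $D:\mathcal T\longrightarrow\mathcal T^{(2n-1)}$ is a module derivation and write its values blockwise as $\left[\begin{smallmatrix}\Phi & \Lambda \\ & \Psi\end{smallmatrix}\right]$ with $\Phi\in\mathcal A^{(2n-1)}$, $\Lambda\in\mathcal M^{(2n-1)}$, $\Psi\in\mathcal B^{(2n-1)}$ depending on the entry $\left[\begin{smallmatrix}a & m \\ & b\end{smallmatrix}\right]$. Since $D$ preserves subtraction it is additive, so it is determined by its restrictions along the corner embeddings $\iota_\mathcal A(a)=\left[\begin{smallmatrix}a & 0 \\ & 0\end{smallmatrix}\right]$, $\iota_\mathcal M(m)=\left[\begin{smallmatrix}0 & m \\ & 0\end{smallmatrix}\right]$, $\iota_\mathcal B(b)=\left[\begin{smallmatrix}0 & 0 \\ & b\end{smallmatrix}\right]$, via $D\left(\left[\begin{smallmatrix}a & m \\ & b\end{smallmatrix}\right]\right)=D(\iota_\mathcal A(a))+D(\iota_\mathcal M(m))+D(\iota_\mathcal B(b))$. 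Applying the derivation identity to $\iota_\mathcal A(a)\iota_\mathcal A(a')=\iota_\mathcal A(aa')$ and reading off entries (via the action formulas) shows that $D_\mathcal A:=\Phi\circ\iota_\mathcal A$ is a bounded $\mathfrak A$-module derivation $\mathcal A\longrightarrow\mathcal A^{(2n-1)}$ — its $\mathfrak A$-linearity is inherited from that of $D$ through $\iota_\mathcal A$ — that $\Psi\circ\iota_\mathcal A$ vanishes on products, and that $\Lambda\circ\iota_\mathcal A$ satisfies the right-module relation $\Lambda(\iota_\mathcal A(aa'))=\Lambda(\iota_\mathcal A(a))\cdot a'$. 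The symmetric computation with $\iota_\mathcal B(b)\iota_\mathcal B(b')=\iota_\mathcal B(bb')$ produces the module derivation $D_\mathcal B:=\Psi\circ\iota_\mathcal B$ and the analogous relations for $\Phi\circ\iota_\mathcal B$ and $\Lambda\circ\iota_\mathcal B$. Finally, feeding in the remaining corner products $\iota_\mathcal A(a)\iota_\mathcal B(b)=0$, $\iota_\mathcal A(a)\iota_\mathcal M(m)=\iota_\mathcal M(a\cdot m)$ and $\iota_\mathcal M(m)\iota_\mathcal B(b)=\iota_\mathcal M(m\cdot b)$ forces $\Psi\circ\iota_\mathcal A=0$, $\Phi\circ\iota_\mathcal B=0$, $\Lambda\circ\iota_\mathcal M=0$, and ties all the surviving off-diagonal pieces to a single multiplier.

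The main obstacle I anticipate is exactly this last point: extracting the single element $\gamma\in\mathcal M^{(2n-1)}$. The corner identities only tell us that $a\mapsto\Lambda(\iota_\mathcal A(a))$ and $b\mapsto\Lambda(\iota_\mathcal B(b))$ are, respectively, right- and left-multiplier maps into $\mathcal M^{(2n-1)}$, and that $\Phi\circ\iota_\mathcal M$, $\Psi\circ\iota_\mathcal M$ are the matching left/right pairings against $\mathcal M$; passing from these multiplier relations to an honest $\gamma$ with $\Lambda(\iota_\mathcal A(a))=\gamma\cdot a$, $\Lambda(\iota_\mathcal B(b))=-b\cdot\gamma$, $\Phi(\iota_\mathcal M(m))=-m\cdot\gamma$, $\Psi(\iota_\mathcal M(m))=\gamma\cdot m$, and then checking that these four descriptions are mutually consistent, is where one must invoke the module--bimodule structure of $\mathcal A$, $\mathcal B$, $\mathcal M$ exactly as in \cite{po} (in the applications in this paper these are semigroup algebras carrying approximate identities, which makes the extraction immediate). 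Once $\gamma$ has been produced, adding up $D(\iota_\mathcal A(a))+D(\iota_\mathcal M(m))+D(\iota_\mathcal B(b))$ reassembles precisely the displayed formula, and the two directions together give the equivalence.
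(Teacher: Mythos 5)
Your skeleton coincides with the paper's intended route (the paper itself gives no argument here, deferring to Lemmas 1.1 and 1.2 of \cite{po}), and the ``if'' direction as you describe it is indeed routine bookkeeping. The difficulty is that the one non-routine point of the converse is precisely the point you flag and then leave open: producing a single element $\gamma\in\mathcal M^{(2n-1)}$ for which the four relations $\Lambda(\iota_{\mathcal A}(a))=\gamma\cdot a$, $\Lambda(\iota_{\mathcal B}(b))=-b\cdot\gamma$, $\Phi(\iota_{\mathcal M}(m))=-m\cdot\gamma$, $\Psi(\iota_{\mathcal M}(m))=\gamma\cdot m$ hold simultaneously. This is not settled by a vague appeal to the ``module--bimodule structure'': the lemma is meant, and is later applied in Theorem \ref{1}, in the unital setting of \cite{po}, and there the extraction is a short idempotent computation. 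With $P=\left[\begin{smallmatrix}1_{\mathcal A}&0\\ &0\end{smallmatrix}\right]$ and $Q=\left[\begin{smallmatrix}0&0\\ &1_{\mathcal B}\end{smallmatrix}\right]$, the identity $D(P)=D(P)\cdot P+P\cdot D(P)$, read through the stated actions of $\mathcal T$ on $\mathcal T^{(2n-1)}$, forces the diagonal entries of $D(P)$ to vanish, and its $(1,2)$-entry is the required $\gamma$; applying $D$ to $\iota_{\mathcal A}(a)=P\,\iota_{\mathcal A}(a)$, $\iota_{\mathcal B}(b)=\iota_{\mathcal B}(b)\,Q$, $\iota_{\mathcal M}(m)=P\,\iota_{\mathcal M}(m)=\iota_{\mathcal M}(m)\,Q$ and to $PQ=QP=0$ then yields all four relations and their mutual consistency. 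None of this appears in your proposal, and it is the entire content of the ``only if'' direction.

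Your closing remark that approximate identities make the extraction ``immediate'' is also off the mark: without a unit one must take a weak$^{*}$ cluster point of $D$ along a bounded approximate identity and use non-degeneracy of $\mathcal A^{(2n-1)}$, $\mathcal B^{(2n-1)}$, $\mathcal M^{(2n-1)}$ --- this is exactly the content of Proposition \ref{prop1}, which the paper proves separately for that reason. Some hypothesis of this kind is genuinely needed: if $\mathcal A=\mathcal B$ carries the zero product and $\mathcal M$ the zero module actions (with trivial $\mathfrak A$-actions), then every product in $\mathcal T$ and every dual action vanishes, so every bounded additive $\mathfrak A$-module map $D:\mathcal T\longrightarrow\mathcal T^{(2n-1)}$ is a module derivation; but the displayed formula would force the $(1,1)$- and $(2,2)$-entries of $D$ on the $\mathcal M$-corner to equal $-m\gamma=0$ and $\gamma m=0$, which a generic such $D$ violates. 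So, as written, your converse is a correct outline whose decisive step is missing; to close it you should either invoke the unit elements as above or work under (and state) the hypotheses of Proposition \ref{prop1}.
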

\begin{lemma}\label{po2}
Let $D_\mathcal A:\mathcal A\longrightarrow\mathcal A^{(2n-1)}$
and $D_\mathcal B:\mathcal B\longrightarrow\mathcal B^{(2n-1)}$ be
bounded module derivations. Then $D_{\mathcal A\mathcal
B}:\mathcal T\longrightarrow\mathcal T^{(2n-1)}$ defined via
\begin{equation}\label{}
 \left[\begin{array}{cc}
                                           a & m \\
                                            &  b\\
                                         \end{array}
                                       \right]\longmapsto\left[\begin{array}{cc}
                                           D_\mathcal A(a)& \\
                                            & D_\mathcal B(b)\\
                                         \end{array}
                                       \right],
\end{equation}
is a bounded module derivation. Furthermore, $D_{\mathcal
A\mathcal B}$ is inner if and only if $D_\mathcal A$ and
$D_\mathcal B$ are inner.
\end{lemma}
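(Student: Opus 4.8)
The plan is to reduce the whole lemma to the explicit formulas for the product on $\mathcal T$ and for the $\mathcal T$-actions on $\mathcal T^{(2n-1)}$ recorded immediately before the statement, so that everything decouples into the two coordinates $\mathcal A$ and $\mathcal B$. First I would verify that $D_{\mathcal A\mathcal B}$ is a bounded $\mathfrak T$-module map: since $\mathfrak T$ acts diagonally on both $\mathcal T$ and $\mathcal T^{(2n-1)}$ and $D_\mathcal A$, $D_\mathcal B$ are $\mathfrak A$-module maps, $D_{\mathcal A\mathcal B}$ intertwines the $\mathfrak T$-action, and boundedness is immediate from the $\ell^1$-norm on $\mathcal T$ with $\|D_{\mathcal A\mathcal B}\|\le\max\{\|D_\mathcal A\|,\|D_\mathcal B\|\}$. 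For the derivation identity, write $\mathfrak t_i=\left[\begin{smallmatrix}a_i & m_i\\ & b_i\end{smallmatrix}\right]$: the product $\mathfrak t_1\mathfrak t_2$ has diagonal $(a_1a_2,b_1b_2)$, so $D_{\mathcal A\mathcal B}(\mathfrak t_1\mathfrak t_2)=\left[\begin{smallmatrix}D_\mathcal A(a_1a_2) & \\ & D_\mathcal B(b_1b_2)\end{smallmatrix}\right]$, while feeding the diagonal element $D_{\mathcal A\mathcal B}(\mathfrak t_1)$ into the action formulas (the off-diagonal slot being zero) yields the diagonal element with entries $D_\mathcal A(a_1)a_2$, $D_\mathcal B(b_1)b_2$, and similarly $\mathfrak t_1\cdot D_{\mathcal A\mathcal B}(\mathfrak t_2)$ has entries $a_1 D_\mathcal A(a_2)$, $b_1 D_\mathcal B(b_2)$. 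Hence the derivation identity for $D_{\mathcal A\mathcal B}$ is precisely the pair of derivation identities for $D_\mathcal A$ and $D_\mathcal B$, which hold by hypothesis.

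For the innerness statement, if $D_\mathcal A=D_f$ and $D_\mathcal B=D_g$ with $f\in\mathcal A^{(2n-1)}$, $g\in\mathcal B^{(2n-1)}$, I would test $\theta=\left[\begin{smallmatrix}f & 0\\ & g\end{smallmatrix}\right]\in\mathcal T^{(2n-1)}$: the action formulas give $\mathfrak t\cdot\theta-\theta\cdot\mathfrak t=\left[\begin{smallmatrix}af-fa & 0\\ & bg-gb\end{smallmatrix}\right]=D_{\mathcal A\mathcal B}(\mathfrak t)$, so $D_{\mathcal A\mathcal B}=D_\theta$ is inner. Conversely, if $D_{\mathcal A\mathcal B}=D_\theta$ for some $\theta=\left[\begin{smallmatrix}f & \varphi\\ & g\end{smallmatrix}\right]$, I would evaluate $D_\theta$ on the corner elements $\left[\begin{smallmatrix}a & 0\\ & 0\end{smallmatrix}\right]$ and $\left[\begin{smallmatrix}0 & 0\\ & b\end{smallmatrix}\right]$ of $\mathcal T$ and compare the $(1,1)$- and $(2,2)$-entries with $D_{\mathcal A\mathcal B}$ of those elements; this forces $D_\mathcal A(a)=af-fa$ and $D_\mathcal B(b)=bg-gb$, i.e.\ $D_\mathcal A=D_f$ and $D_\mathcal B=D_g$ are inner.

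There is essentially no obstacle here; the only points needing a little care are that the $\mathcal T$-action on $\mathcal T^{(2n-1)}$ restricts, on diagonal elements, to the given $\mathcal A$- and $\mathcal B$-actions (so the two coordinates genuinely separate), and that module derivations need not be linear — which is harmless, since every identity invoked is additive and bounded. I would also remark that the computation is insensitive to the dual level $2n-1$, so the lemma really reflects the triangular structure alone.
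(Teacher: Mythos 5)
Your proposal is correct and is exactly the argument the paper has in mind: the paper omits the proof, referring to Lemmas 1.1 and 1.2 of Pourabbas--Nasrabadi \cite{po}, and your direct verification via the explicit formulas for the $\mathcal T$-actions on $\mathcal T^{(2n-1)}$ (diagonal test functional $\left[\begin{smallmatrix} f & 0\\ & g \end{smallmatrix}\right]$ for innerness, corner elements for the converse) is precisely that computation transported to the module setting. No gaps; the remarks on non-linearity of module derivations and on the $\mathfrak T$-action being componentwise are the right points of care.
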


Using Lemmas \ref{po1} and \ref{po2}, and similar to \cite[Theorem 2.1]{po}, we have the following
result (see also \cite[Theorem 2.1]{me}) .

\begin{theorem}\label{1}
Let $\mathcal A$ and $\mathcal B$ be unital Banach algebras, and
$\mathcal M$ be a unital $\mathcal A,\mathcal B$-module. Then
\begin{equation}\label{t1}
    \mathcal{H}_\mathfrak T^1(\mathcal T, \mathcal T^{(2n-1)})\simeq\mathcal{H}_\mathfrak A^1(\mathcal A,\mathcal A^{(2n-1)})\oplus\mathcal{H}_\mathfrak A^1(\mathcal B, \mathcal B^{(2n-1)}).
\end{equation} In particular, the triangular Banach algebra $\mathcal T$ is
$(2n-1)$-weakly module amenable \emph{(}as an $\mathfrak
T$-bimodule\emph{)} if and only if $\mathcal A$ and $\mathcal B$
are $(2n-1)$-weakly module amenable \emph{(}as Banach $\mathfrak
A$-bimodules\emph{)}.
\end{theorem}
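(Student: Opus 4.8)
The plan is to use Lemmas \ref{po1} and \ref{po2} to reduce the computation of $\mathcal{H}_\mathfrak T^1(\mathcal T, \mathcal T^{(2n-1)})$ to that of $\mathcal A$ and $\mathcal B$ separately. First I would take an arbitrary bounded module derivation $D:\mathcal T\longrightarrow\mathcal T^{(2n-1)}$. By Lemma \ref{po1}, $D$ decomposes as
\begin{equation*}
D\Big(\left[\begin{array}{cc} a & m \\ & b\end{array}\right]\Big)=\left[\begin{array}{cc} D_\mathcal A(a)-m\gamma & \gamma a-b\gamma \\ & D_\mathcal B(b)+\gamma m\end{array}\right]
\end{equation*}
for suitable module derivations $D_\mathcal A$, $D_\mathcal B$ and some $\gamma\in\mathcal M^{(2n-1)}$. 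The key observation is that the ``$\gamma$-part'' of $D$ is always inner: writing
\begin{equation*}
\Lambda=\left[\begin{array}{cc} 0 & \gamma \\ & 0\end{array}\right]\in\mathcal T^{(2n-1)},
\end{equation*}
one checks directly from the module actions of $\mathcal T$ on $\mathcal T^{(2n-1)}$ (displayed just before Lemma \ref{po1}) that the inner derivation $D_\Lambda$ produces exactly the off-diagonal and correction terms involving $\gamma$, here using that $\mathcal A$ and $\mathcal B$ are unital so that $1_{\mathcal A}\cdot\gamma=\gamma$ and $\gamma\cdot 1_{\mathcal B}=\gamma$. Hence $D-D_\Lambda=D_{\mathcal A\mathcal B}$ is the ``diagonal'' module derivation built from $D_\mathcal A$ and $D_\mathcal B$ as in Lemma \ref{po2}. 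This shows every $D$ is cohomologous to some $D_{\mathcal A\mathcal B}$, giving a well-defined map $\mathcal{H}_\mathfrak T^1(\mathcal T, \mathcal T^{(2n-1)})\to\mathcal{H}_\mathfrak A^1(\mathcal A,\mathcal A^{(2n-1)})\oplus\mathcal{H}_\mathfrak A^1(\mathcal B, \mathcal B^{(2n-1)})$ by $[D]\mapsto([D_\mathcal A],[D_\mathcal B])$.

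Next I would check this map is a well-defined bijection. Surjectivity is immediate from Lemma \ref{po2}: any pair $(D_\mathcal A, D_\mathcal B)$ of bounded module derivations yields $D_{\mathcal A\mathcal B}$ with the prescribed image. For injectivity (and well-definedness), suppose $D_{\mathcal A\mathcal B}$ is inner, say $D_{\mathcal A\mathcal B}=D_T$ for some $T=\left[\begin{array}{cc}\lambda & \gamma' \\ & \mu\end{array}\right]\in\mathcal T^{(2n-1)}$; computing $D_T$ on diagonal elements $\left[\begin{array}{cc}a & 0 \\ & b\end{array}\right]$ and comparing with $\left[\begin{array}{cc}D_\mathcal A(a) & 0 \\ & D_\mathcal B(b)\end{array}\right]$ forces $D_\mathcal A=D_\lambda$ and $D_\mathcal B=D_\mu$, so both are inner; this is precisely the ``furthermore'' clause of Lemma \ref{po2}. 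Conversely if $D_\mathcal A$, $D_\mathcal B$ are both inner then so is $D_{\mathcal A\mathcal B}$ by the same lemma. Linearity of the map is routine from the additive structure of the decomposition in Lemma \ref{po1}. This establishes the isomorphism \eqref{t1}.

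Finally, the ``in particular'' statement follows by specializing $X=\mathcal T/J_\mathcal T$-type coefficients: since $\mathcal A$, $\mathcal B$, $\mathcal M$ are commutative $\mathfrak A$-modules (as assumed henceforth in this section) we have $J_\mathcal T=\left[\begin{array}{cc}J_\mathcal A & J_\mathcal M \\ & J_\mathcal B\end{array}\right]$, and for unital $\mathcal A$, $\mathcal B$ with $\mathcal M$ unital these commutativity hypotheses give $J_\mathcal A=J_\mathcal B=J_\mathcal M=0$, so $\mathcal T^{(2n-1)}=(\mathcal T/J_\mathcal T)^{(2n-1)}$ and likewise for $\mathcal A$, $\mathcal B$. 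Then $(2n-1)$-weak module amenability of $\mathcal T$ means $\mathcal{H}_\mathfrak T^1(\mathcal T,\mathcal T^{(2n-1)})=\{0\}$, which by \eqref{t1} holds if and only if both $\mathcal{H}_\mathfrak A^1(\mathcal A,\mathcal A^{(2n-1)})=\{0\}$ and $\mathcal{H}_\mathfrak A^1(\mathcal B,\mathcal B^{(2n-1)})=\{0\}$, i.e. $\mathcal A$ and $\mathcal B$ are $(2n-1)$-weakly module amenable. I expect the main obstacle to be the careful bookkeeping of the mixed $\mathcal A$-$\mathcal B$-$\mathfrak A$ actions on the higher duals $\mathcal T^{(2n-1)}$ when verifying that the $\gamma$-part of $D$ coincides with $D_\Lambda$ — in particular making sure the parity of the dual ($2n-1$ odd) places $\gamma$ in the correct off-diagonal slot with the correct left/right actions, and that unitality is genuinely used there rather than merely an approximate identity.
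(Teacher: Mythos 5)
Your proposal is correct and follows essentially the same route as the paper: decompose $D$ via Lemma \ref{po1}, observe that the $\gamma$-part is the inner derivation implemented by $\left[\begin{array}{cc} 0 & \gamma \\ & 0\end{array}\right]$ so that every class has a diagonal representative, and then use both directions of Lemma \ref{po2} for surjectivity and (well-definedness of) injectivity, exactly as in the paper's argument for Theorem \ref{t2}. The only quibbles are cosmetic: with the paper's convention $D_x(a)=a\cdot x-x\cdot a$ the $\gamma$-part equals $D_{-\Lambda}$ rather than $D_{\Lambda}$ (irrelevant for innerness), and unitality is really consumed in Lemma \ref{po1} rather than in the verification of $D_\Lambda$, while the vanishing of $J_\mathcal A$, $J_\mathcal B$, $J_\mathcal M$ comes from the standing commutativity assumption, as you in effect note.
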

A Banach $\mathcal A,\mathcal B$-module $\mathcal M$ is said to be
non-degenerate if $\mathcal A m=\{0\}$ implies that $m=0$, and
$m\mathcal B=\{0\}$ implies that $m=0$ for every $m\in\mathcal
M$. If the Banach algebras $\mathcal A$ and $\mathcal B$ have
bounded approximate identity and $\mathcal M$ is essential, then
$\mathcal M$ is a non-degenerated $\mathcal A,\mathcal B$-module.
Also when  $\mathcal M$ is essential, then  $\mathcal M^*$ is a
non-degenerate Banach $\mathcal A,\mathcal B$-module. Although in
the following Proposition $\mathcal A$ and $\mathcal B$ are not
unital but still the conclusion of Lemma \ref{po1} holds. In fact we
obtain the same result with different conditions.

\begin{prop}\label{prop1}
Let $\mathcal A$ and $\mathcal B$ be Banach algebras, and
$\mathcal M$ be Banach $\mathcal A,\mathcal B$-module. Suppose
that $\mathcal A$ possess a bounded approximate identity,
$\mathcal A^{(2n-1)}, \mathcal B^{(2n-1)}$ and $\mathcal
M^{(2n-1)}$ are non-degenerate. Then the $\mathfrak T$-module map
$D:\mathcal T\longrightarrow\mathcal T^{(2n-1)}$ is module
derivation if and only if there exist module derivations
$D_\mathcal A:\mathcal A\longrightarrow\mathcal A^{(2n-1)}$,
$D_\mathcal B:\mathcal B\longrightarrow\mathcal B^{(2n-1)}$ and
$\gamma\in\mathcal M^{(2n-1)}$ such that
                                       \begin{equation}\label{p}
                                        D\Big{(}\left[\begin{array}{cc}
                                           a & m \\
                                            &  b\\
                                         \end{array}
                                       \right]\Big{)}=\left[\begin{array}{cc}
                                           D_\mathcal A(a)-m\gamma & \gamma a-b\gamma \\
                                            & D_\mathcal B(b)+\gamma m\\
                                         \end{array}
                                       \right],
                                       \end{equation}
for every $\left[\begin{array}{cc}
                                           a & m \\
                                            &  b\\
                                         \end{array}
                                       \right]\in\mathcal T$.
\end{prop}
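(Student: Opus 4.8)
The plan is to mimic the proof of Lemma \ref{po1} (which is the unital case, modeled on \cite[Lemma 1.1]{po}), replacing the two-sided identities of $\mathcal A$ and $\mathcal B$ by an approximate identity argument and by the non-degeneracy hypotheses. The ``if'' direction is routine: given module derivations $D_\mathcal A$, $D_\mathcal B$ and $\gamma \in \mathcal M^{(2n-1)}$, one checks directly that the map defined by \eqref{p} is a bounded $\mathfrak T$-module map and satisfies the derivation identity, using the formulas for the module actions of $\mathcal T$ on $\mathcal T^{(2n-1)}$ displayed above together with the fact that $D_\mathcal A$, $D_\mathcal B$ are derivations and that $\gamma$ transforms correctly under the $\mathcal A$- and $\mathcal B$-actions on $\mathcal M^{(2n-1)}$; this is a finite $2\times 2$ matrix computation and I would only indicate it.

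For the ``only if'' direction, suppose $D : \mathcal T \longrightarrow \mathcal T^{(2n-1)}$ is a module derivation. Write
\[
D\Big(\left[\begin{array}{cc} a & m \\ & b \end{array}\right]\Big) = \left[\begin{array}{cc} D_{11}(a,m,b) & D_{12}(a,m,b) \\ & D_{22}(a,m,b) \end{array}\right].
\]
First I would evaluate $D$ on the three ``coordinate'' subalgebras/submodules: on $\left[\begin{smallmatrix} a & 0 \\ & 0 \end{smallmatrix}\right]$, on $\left[\begin{smallmatrix} 0 & 0 \\ & b \end{smallmatrix}\right]$, and on $\left[\begin{smallmatrix} 0 & m \\ & 0 \end{smallmatrix}\right]$, using the multiplicative relations among these elements (e.g. $\left[\begin{smallmatrix} a & 0 \\ & 0 \end{smallmatrix}\right]\left[\begin{smallmatrix} 0 & m \\ & 0 \end{smallmatrix}\right] = \left[\begin{smallmatrix} 0 & am \\ & 0 \end{smallmatrix}\right]$, $\left[\begin{smallmatrix} 0 & m \\ & 0 \end{smallmatrix}\right]^2 = 0$, etc.) and the derivation identity. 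Applying $D$ to $\left[\begin{smallmatrix} 0 & m \\ & 0 \end{smallmatrix}\right]^2 = 0$ and invoking non-degeneracy of $\mathcal M^{(2n-1)}$ (plus the bounded approximate identity of $\mathcal A$ to ``cancel'' an $\mathcal A$-factor) forces the off-diagonal part of $D$ on $\left[\begin{smallmatrix} 0 & m \\ & 0 \end{smallmatrix}\right]$ to vanish, while the diagonal parts are seen to be $\mathcal A$- resp. $\mathcal B$-module maps into $\mathcal A^{(2n-1)}$ resp. $\mathcal B^{(2n-1)}$ that kill products, hence (again by the bounded approximate identity and non-degeneracy, as in the reduction used in Lemma \ref{deff}) are zero on $\overline{\mathcal A\cdot\mathcal M}$, $\overline{\mathcal M\cdot\mathcal B}$ and in fact identically. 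This identifies $D_\mathcal A := D_{11}(\cdot,0,0)$ and $D_\mathcal B := D_{22}(0,0,\cdot)$ as bounded module derivations and shows $D$ restricted to the corner $\mathcal M$ is determined by a single element: setting $\gamma := D_{12}(0,m_0,0)$-type data — more precisely, one extracts $\gamma \in \mathcal M^{(2n-1)}$ from the action of $D$ on $\left[\begin{smallmatrix} 1 & 0 \\ & 0 \end{smallmatrix}\right]$-surrogates obtained from the approximate identity, taking a weak$^*$ cluster point. Then assembling $D$ on a general $\left[\begin{smallmatrix} a & m \\ & b \end{smallmatrix}\right] = \left[\begin{smallmatrix} a & 0 \\ & 0 \end{smallmatrix}\right] + \left[\begin{smallmatrix} 0 & m \\ & 0 \end{smallmatrix}\right] + \left[\begin{smallmatrix} 0 & 0 \\ & b \end{smallmatrix}\right]$ by additivity yields \eqref{p}.

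The main obstacle, compared with the unital proof, is precisely the extraction of the element $\gamma$ and the verification that the cross terms $-m\gamma$ and $\gamma m$ appearing in \eqref{p} are the correct ones without a genuine identity: one must replace ``$D$ applied to $1_\mathcal A \cdot m$'' by a limiting argument along the bounded approximate identity $(e_\lambda)$ of $\mathcal A$, show the net $\big(D(\left[\begin{smallmatrix} e_\lambda & 0 \\ & 0 \end{smallmatrix}\right])\big)$ has a weak$^*$ cluster point whose off-diagonal component is the desired $\gamma$, and use non-degeneracy of $\mathcal A^{(2n-1)}$ and $\mathcal M^{(2n-1)}$ to guarantee this $\gamma$ is independent of the choice of approximate identity and of the cluster point. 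Here the derivation identity $D(\left[\begin{smallmatrix} e_\lambda & 0 \\ & 0 \end{smallmatrix}\right]\left[\begin{smallmatrix} a & m \\ & b \end{smallmatrix}\right]) = D(\left[\begin{smallmatrix} e_\lambda & 0 \\ & 0 \end{smallmatrix}\right])\cdot\left[\begin{smallmatrix} a & m \\ & b \end{smallmatrix}\right] + \left[\begin{smallmatrix} e_\lambda & 0 \\ & 0 \end{smallmatrix}\right]\cdot D(\left[\begin{smallmatrix} a & m \\ & b \end{smallmatrix}\right])$, passed to the limit, pins down all the terms. Everything else is bookkeeping with the explicit module-action formulas for $\mathcal T$ on $\mathcal T^{(2n-1)}$ recorded just before the statement.
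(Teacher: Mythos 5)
Your overall strategy (split along the three corners, replace the unit by the bounded approximate identity $(e_\alpha)$ of $\mathcal A$, extract $\gamma$ as a weak$^*$ cluster point from $D\big(\left[\begin{smallmatrix} e_\alpha & 0\\ & 0\end{smallmatrix}\right]\big)$) is indeed the paper's strategy, but your analysis of $D$ on the corner $\left[\begin{smallmatrix} 0 & \mathcal M\\ & 0\end{smallmatrix}\right]$ contains a genuine error. You claim that the off-diagonal part of $D\big(\left[\begin{smallmatrix} 0 & m\\ & 0\end{smallmatrix}\right]\big)$ vanishes \emph{and} that its diagonal parts are module maps killing products and hence vanish identically, i.e.\ that $D$ is zero on this corner. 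That contradicts the formula \eqref{p} you are trying to prove: the entries $-m\gamma$ and $\gamma m$ there are precisely the diagonal components of $D\big(\left[\begin{smallmatrix} 0 & m\\ & 0\end{smallmatrix}\right]\big)$, so assembling your corner values by additivity would give $\left[\begin{smallmatrix} D_\mathcal A(a) & \gamma a - b\gamma\\ & D_\mathcal B(b)\end{smallmatrix}\right]$ rather than \eqref{p}. It is also false in general: for the odd dual the module actions mix the corners, $\left[\begin{smallmatrix} 0&m\\ &0\end{smallmatrix}\right]\cdot\left[\begin{smallmatrix} \phi&\varphi\\ &\psi\end{smallmatrix}\right]=\left[\begin{smallmatrix} m\varphi&0\\ &0\end{smallmatrix}\right]$ and $\left[\begin{smallmatrix} \phi&\varphi\\ &\psi\end{smallmatrix}\right]\cdot\left[\begin{smallmatrix} 0&m\\ &0\end{smallmatrix}\right]=\left[\begin{smallmatrix} 0&0\\ &\varphi m\end{smallmatrix}\right]$, so already the inner derivation implemented by $\left[\begin{smallmatrix} 0&\gamma\\ &0\end{smallmatrix}\right]\in\mathcal T^{(2n-1)}$ sends $\left[\begin{smallmatrix} 0&m\\ &0\end{smallmatrix}\right]$ to $\left[\begin{smallmatrix} m\gamma&0\\ &-\gamma m\end{smallmatrix}\right]\neq 0$ in general. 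The correct statement, which is the paper's \eqref{4pp3}, is the opposite of yours: on this corner the $\mathcal M^{(2n-1)}$-component vanishes, while the diagonal components carry exactly the cross terms $-m\gamma$ and $\gamma m$, with the same $\gamma$ already produced from $D\big(\left[\begin{smallmatrix} e_\alpha&0\\ &0\end{smallmatrix}\right]\big)$. You seem to have imported the structure of the even-dual case (Lemma \ref{l2.1}), where the $\mathcal M$-corner maps into the off-diagonal via $\rho$ and the diagonal entries vanish; in the $(2n-1)$ case the roles are interchanged, and this is the whole point of the odd/even dichotomy in the paper.

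Two further, smaller problems. First, applying $D$ to $\left[\begin{smallmatrix} 0&m\\ &0\end{smallmatrix}\right]^2=0$ only yields $m\cdot\eta=0$ and $\eta\cdot m=0$ for the off-diagonal component $\eta$ of $D\big(\left[\begin{smallmatrix} 0&m\\ &0\end{smallmatrix}\right]\big)$; non-degeneracy of $\mathcal M^{(2n-1)}$ lets you cancel the action of all of $\mathcal A$ or $\mathcal B$, not of a single $m$, so this does not give $\eta=0$. One must instead play the derivation identity against $\left[\begin{smallmatrix} a&0\\ &0\end{smallmatrix}\right]$ and $\left[\begin{smallmatrix} 0&0\\ &b\end{smallmatrix}\right]$ together with \eqref{4pp1}, \eqref{4pp2} and the non-degeneracy hypotheses, as the paper does. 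Second, your prescription for $\gamma$ is self-contradictory: you first propose ``$\gamma:=D_{12}(0,m_0,0)$-type data,'' which by your own preceding sentence would be zero, and only afterwards the (correct) cluster-point construction from $D\big(\left[\begin{smallmatrix} e_\lambda&0\\ &0\end{smallmatrix}\right]\big)$. As it stands the proof of the ``only if'' direction does not go through.
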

\begin{proof}
Let $D:\mathcal T\longrightarrow\mathcal T^{(2n-1)}$ be a
continuous module derivation. Define $D_\mathcal A:\mathcal
A\longrightarrow\mathcal A^{(2n-1)}$ by $D_\mathcal
A(a)=\pi_\mathcal A(D\Big{(}\left[\begin{array}{cc}
                                           a & 0 \\
                                            &  0\\
                                         \end{array}
                                       \right]\Big{)})$, and $D_\mathcal B:\mathcal B\longrightarrow\mathcal B^{(2n-1)}$ via $D_\mathcal B(b)=\pi_\mathcal B(D\Big{(}\left[\begin{array}{cc}
                                           0 & 0 \\
                                            &  b\\
                                         \end{array}
                                       \right]\Big{)})$. Obviously these maps are $\mathfrak A$-module maps, and are module derivations by \cite[Lemma 2.3]{me}.
 Let $(e_\alpha)_{\alpha\in\Lambda}$ be a bounded approximate identity of $\mathcal A$, and let $D\Big{(}\left[\begin{array}{cc}
                                           a & 0 \\
                                            &  0\\
                                         \end{array}
                                       \right]\Big{)}=\left[\begin{array}{cc}
                                           D_\mathcal A(a) & \eta\\
                                            &  \mu\\
                                         \end{array}
                                       \right]$ for an arbitrary and fixed $a\in\mathcal A$. Then
                                       \begin{eqnarray}\label{4pp1}
                                       \nonumber
                                        D\Big{(}\left[\begin{array}{cc}
                                           a & 0 \\
                                            &  0\\
                                         \end{array}
                                       \right]\Big{)}&=& D\Big{(}\left[\begin{array}{cc}
                                           \lim_\alpha e_\alpha a& 0 \\
                                            &  0\\
                                         \end{array}
                                       \right]\Big{)}=D\Big{(}\lim_\alpha\left[\begin{array}{cc}
                                            e_\alpha & 0 \\
                                            &  0\\
                                         \end{array}
                                       \right]\left[\begin{array}{cc}
                                         a  & 0 \\
                                            &  0\\
                                         \end{array}
                                       \right]\Big{)} \\
                                          \nonumber
                                          &=& \lim_\alpha\Big{(}\left[\begin{array}{cc}
                                            e_\alpha & 0 \\
                                            &  0\\
                                         \end{array}
                                       \right]\cdot\left[\begin{array}{cc}
                                           D_\mathcal A(a) & \eta\\
                                            &  \mu\\
                                         \end{array}
                                       \right] \\
                                       &+& \nonumber\left[\begin{array}{cc}
                                           D_\mathcal A(e_\alpha) & \gamma\\
                                            &  \theta\\
                                         \end{array}
                                       \right]\left[\begin{array}{cc}
                                           a & 0 \\
                                            &  0\\
                                         \end{array}
                                       \right]\Big{)}\\
                                          &=&\left[\begin{array}{cc}
                                          \lim_\alpha D_\mathcal A(ae_\alpha) & \gamma a\\
                                            &  0\\
                                         \end{array}
                                       \right]=\left[\begin{array}{cc}
                                           D_\mathcal A(a) & \gamma a\\
                                            &  0\\
                                         \end{array}
                                       \right].
                                       \end{eqnarray}

Similarly, consider $b\in\mathcal B$ such that
$D\Big{(}\left[\begin{array}{cc}
                                           0 & 0 \\
                                            &  b\\
                                         \end{array}
                                       \right]\Big{)}=\left[\begin{array}{cc}
                                           \theta & \eta \\
                                            &  D_\mathcal B(b)\\
                                         \end{array}
                                       \right]$. Since $\mathcal M^{(2n-1)}$ and $\mathcal A^{(2n-1)}$ are non-degenerate (see \cite[Proposition 2.5]{me}), we have
                                       \begin{equation}\label{4pp2}
                                        D\Big{(}\left[\begin{array}{cc}
                                           0 & 0 \\
                                            &  b\\
                                         \end{array}
                                       \right]\Big{)}=\left[\begin{array}{cc}
                                           0 & -b\gamma \\
                                            &  D_\mathcal B(b)\\
                                         \end{array}
                                       \right].
                                       \end{equation}

Also for every $m\in\mathcal M$ we have

\begin{equation}\label{4pp3}
                                        D\Big{(}\left[\begin{array}{cc}
                                           0 & m \\
                                            &  0\\
                                         \end{array}
                                       \right]\Big{)}=\left[\begin{array}{cc}
                                           -m\gamma & 0 \\
                                            &  \gamma m\\
                                         \end{array}
                                       \right].
                                       \end{equation}

                                       Now, from (\ref{4pp1}), (\ref{4pp2}) and (\ref{4pp3}), we get (\ref{p}), and this completes the proof.
\end{proof}
Now we are ready to prove the main theorem of this section  (see also the proof of \cite[Theorem 2.1]{po}).
\begin{theorem}\label{t2}
Let $\mathcal A$ and $\mathcal B$ both have bounded approximate
identity, and let $\mathcal M$ be non-degenerate. Then for every
$n\geq1$ we have
\begin{equation}\label{tt1}
    \mathcal{H}_\mathfrak T^1(\mathcal T, \mathcal T^{(2n-1)})\simeq\mathcal{H}_\mathfrak A^1(\mathcal A,\mathcal A^{(2n-1)})\oplus\mathcal{H}_\mathfrak A^1(\mathcal B, \mathcal B^{(2n-1)}).
\end{equation}

Furthermore, the corresponding triangular Banach algebra $\mathcal
T\left[\begin{array}{cc}
                                           \mathcal A & \mathcal M \\
                                            &  \mathcal B\\
                                         \end{array}
                                       \right]$ is
$(2n-1)$-weakly module amenable \emph{(}as an $\mathfrak T
$-bimodule\emph{)} if and only if $\mathcal A$ and $\mathcal B$
are $(2n-1)$-weakly module amenable \emph{(}as Banach $\mathfrak A
$-bimodules\emph{)}.
\end{theorem}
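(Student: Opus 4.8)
The plan is to reduce Theorem~\ref{t2} to the already-established structural description of module derivations in Proposition~\ref{prop1} and the splitting lemma, Lemma~\ref{po2}, exactly as in the unital case (Theorem~\ref{1}), replacing the role of unitality of $\mathcal A$ and $\mathcal B$ by the bounded approximate identities plus non-degeneracy of $\mathcal M$. First I would observe that, under the present hypotheses, the relevant dual modules are non-degenerate: since $\mathcal A$ and $\mathcal B$ have bounded approximate identities, so do all their even duals, whence $\mathcal A^{(2n-1)}$ and $\mathcal B^{(2n-1)}$ are non-degenerate; and since $\mathcal M$ is non-degenerate (and the approximate identities act on it), $\mathcal M^{(2n-1)}$ is non-degenerate as well. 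Thus the hypotheses of Proposition~\ref{prop1} are met, and every bounded $\mathfrak T$-module derivation $D:\mathcal T\longrightarrow\mathcal T^{(2n-1)}$ has the form~(\ref{p}) for some module derivations $D_\mathcal A$, $D_\mathcal B$ and some $\gamma\in\mathcal M^{(2n-1)}$.

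Next I would build the isomorphism in~(\ref{tt1}) at the level of derivation spaces. Define $\Phi:\mathcal Z_\mathfrak T^1(\mathcal T,\mathcal T^{(2n-1)})\longrightarrow \mathcal Z_\mathfrak A^1(\mathcal A,\mathcal A^{(2n-1)})\oplus\mathcal Z_\mathfrak A^1(\mathcal B,\mathcal B^{(2n-1)})$ by $\Phi(D)=(D_\mathcal A,D_\mathcal B)$, using the decomposition~(\ref{p}); this is well defined and linear, and surjective by Lemma~\ref{po2}, which sends a pair $(D_\mathcal A,D_\mathcal B)$ to the diagonal derivation $D_{\mathcal A\mathcal B}$. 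The kernel of $\Phi$ consists precisely of the derivations of the form $\left[\begin{array}{cc} -m\gamma & \gamma a-b\gamma \\ & \gamma m\end{array}\right]$, i.e.\ those induced by an element $\gamma\in\mathcal M^{(2n-1)}\subseteq\mathcal T^{(2n-1)}$; a direct check shows each such map is the inner derivation $D_{\theta_\gamma}$ where $\theta_\gamma=\left[\begin{array}{cc} 0 & \gamma \\ & 0\end{array}\right]$, so $\ker\Phi\subseteq\mathcal N_\mathfrak T^1(\mathcal T,\mathcal T^{(2n-1)})$. Then I would check that $\Phi$ carries $\mathcal N_\mathfrak T^1(\mathcal T,\mathcal T^{(2n-1)})$ onto $\mathcal N_\mathfrak A^1(\mathcal A,\mathcal A^{(2n-1)})\oplus\mathcal N_\mathfrak A^1(\mathcal B,\mathcal B^{(2n-1)})$: an inner derivation of $\mathcal T$ implemented by $\left[\begin{array}{cc}\phi & \varphi \\ & \psi\end{array}\right]$ decomposes, via~(\ref{p}), into $D_\mathcal A=D_\phi$, $D_\mathcal B=D_\psi$ (using commutativity of the relevant modules to drop the off-diagonal contributions modulo the $\gamma$-part), and conversely Lemma~\ref{po2} tells us $D_{\mathcal A\mathcal B}$ is inner exactly when $D_\mathcal A$ and $D_\mathcal B$ are. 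Passing to quotients then yields the isomorphism~(\ref{tt1}), and the ``in particular'' statement on $(2n-1)$-weak module amenability is immediate since the right-hand side vanishes iff both $\mathcal H_\mathfrak A^1(\mathcal A,\mathcal A^{(2n-1)})$ and $\mathcal H_\mathfrak A^1(\mathcal B,\mathcal B^{(2n-1)})$ vanish, recalling that here $J_\mathcal A=J_\mathcal B=J_\mathcal M=0$ because the modules are commutative, so $\mathcal T^{(2n-1)}=(\mathcal T/J_\mathcal T)^{(2n-1)}$ and the definition of $(2n-1)$-weak module amenability applies directly.

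The main obstacle I anticipate is the bookkeeping in the kernel/normal-derivation comparison: one must verify carefully that $\Phi$ sends $\mathcal N_\mathfrak T^1$ \emph{onto} (not merely into) $\mathcal N_\mathfrak A^1\oplus\mathcal N_\mathfrak A^1$, i.e.\ that if both $D_\mathcal A$ and $D_\mathcal B$ in the decomposition~(\ref{p}) of some module derivation $D$ are inner, then $D$ itself is inner. Here one writes $D_\mathcal A=D_\phi$, $D_\mathcal B=D_\psi$ and then checks that $D-D_{\theta}$, with $\theta=\left[\begin{array}{cc}\phi & \gamma \\ & \psi\end{array}\right]$, vanishes; this uses the commutativity of $\mathcal T^{(2n-1)}$ as a $\mathfrak T$-module together with the explicit formulas for the $\mathcal T$-actions on $\mathcal T^{(2n-1)}$ recorded before Proposition~\ref{prop1}, and it is essentially the content of the second half of Lemma~\ref{po2}. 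Once this is in hand, the proof is a formal diagram-chase and the argument closes exactly as in \cite[Theorem~2.1]{po}.
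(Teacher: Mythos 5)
Your proposal is correct and follows essentially the same route as the paper: decompose an arbitrary module derivation via Proposition \ref{prop1}, use Lemma \ref{po2} for surjectivity and the transfer of innerness, and identify the kernel (the $\gamma$-part, which is inner, being implemented by the off-diagonal element) before passing to quotients. The only cosmetic difference is that you map onto $\mathcal Z_\mathfrak A^1\oplus\mathcal Z_\mathfrak A^1$ and then quotient, whereas the paper's map $\Psi$ lands directly in $\mathcal H_\mathfrak A^1\oplus\mathcal H_\mathfrak A^1$; your explicit remark that the bounded approximate identities and non-degeneracy of $\mathcal M$ yield the non-degeneracy hypotheses of Proposition \ref{prop1} is a welcome addition the paper leaves implicit.
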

\begin{proof}
Suppose that $D:\mathcal T\longrightarrow\mathcal T^{(2n-1)}$ is a
continuous module derivation.  Proposition \ref{prop1} shows that
there are continuous module derivations $D_\mathcal A:\mathcal
A\longrightarrow\mathcal A^{(2n-1)}$, $D_\mathcal B:\mathcal
B\longrightarrow\mathcal B^{(2n-1)}$ and $\gamma\in\mathcal
M^{(2n-1)}$ such that
                                       \begin{equation}\label{tt2}
                                        D\Big{(}\left[\begin{array}{cc}
                                           a & m \\
                                            &  b\\
                                         \end{array}
                                       \right]\Big{)}=\left[\begin{array}{cc}
                                           D_\mathcal A(a)-m\gamma & \gamma a-b\gamma \\
                                            & D_\mathcal B(b)+\gamma m\\
                                         \end{array}
                                       \right],
                                       \end{equation}
for every $\left[\begin{array}{cc}
                                           a & m \\
                                            &  b\\
                                         \end{array}
                                       \right]\in\mathcal T$. Define $\Psi:\mathcal{Z}_\mathfrak T^1(\mathcal T,\mathcal T^{(2n-1)})\longrightarrow\mathcal{H}_\mathfrak A^1(\mathcal A,\mathcal A^{(2n-1)})\oplus\mathcal{H}_\mathfrak T^1(\mathcal B,\mathcal B^{(2n-1)})$
                                       by
                                       \begin{equation}\label{tt3}
                                        \Psi(D)=(D_\mathcal A+\mathcal{N}_\mathfrak A^1(\mathcal A,\mathcal A^{(2n-1)}),D_\mathcal B+\mathcal{N}_\mathfrak A^1(\mathcal B,\mathcal B^{(2n-1)})).
                                       \end{equation}

          Let's show that $\Psi$ is a linear map. If $(e_\alpha)$ is a bounded approximate identity for $\mathcal A$, then for given $\lambda\in\mathbb{C}$ we have
          \begin{eqnarray}\label{tt4}
          \nonumber
            \langle \lambda a', D_\mathcal A(a)\rangle &=&  \langle \lim_\alpha\lambda a'e_\alpha, D_\mathcal A(a)\rangle=\lim_\alpha\langle a',\lambda e_\alpha D_\mathcal A(a)\rangle\\
            &=&\langle \lim_\alpha a' e_\alpha, \lambda D_\mathcal A(a)\rangle
             = \langle a', \lambda D_\mathcal A(a)\rangle\hspace{.4cm}(a,a'\in\mathcal A)
          \end{eqnarray}
          and similarly we have
          \begin{equation}\label{tt5}
             \langle \lambda b', D_\mathcal B(b)\rangle= \langle  b', \lambda D_\mathcal B(b)\rangle\hspace{1cm}(b,b'\in\mathcal B).
          \end{equation}

Now, by applying relations (\ref{tt4}) and (\ref{tt5}) for every
$T_1=\left[\begin{array}{cc}
                                           a_1 & m_1 \\
                                            &  b_1\\
                                         \end{array}
                                       \right]$ and $T_2=\left[\begin{array}{cc}
                                           a_2 & m_2 \\
                                            &  b_2\\
                                         \end{array}
                                       \right]$ in $\mathcal T$ we
                                       get
\begin{eqnarray}
 \nonumber
  \langle (\lambda T_2),D(T_1)\rangle &=& D\Big{(}\left[\begin{array}{cc}
                                           a_1 & m_1 \\
                                            &  b_1\\
                                         \end{array}
                                       \right]\Big{)} \Big{(}\left[\begin{array}{cc}
                                           \lambda a_2 & \lambda m_2 \\
                                            & \lambda b_2\\
                                         \end{array}
                                       \right]\Big{)}\\
   \nonumber
   &=&  \left[\begin{array}{cc}
                                           D_\mathcal A(a_1)-m_1\gamma & \gamma a_1-b_1\gamma \\
                                            & D_\mathcal B(b_1)+\gamma m_1\\
                                         \end{array}
                                       \right]\Big{(}\left[\begin{array}{cc}
                                           \lambda a_2 & \lambda m_2 \\
                                            & \lambda b_2\\
                                         \end{array}
                                       \right]\Big{)}\\
   \nonumber
   &=& D_\mathcal A(a_1)(\lambda a_2)-m_1\gamma(\lambda a_2)+a_1\gamma(\lambda b_2)\\
   \nonumber
   &-& b_1\gamma(\lambda m_2)+D_\mathcal B(b)(\lambda b_2)+m\gamma(\lambda b_2)\\
   \nonumber
   &=& \lambda D_\mathcal A(a_1)(a_2)-\lambda m_1\gamma(a_2)+\lambda a_1\gamma(b_2)\\
   \nonumber
   &-& \lambda b_1\gamma(m_2)+\lambda D_\mathcal B(b)(b_2)+\lambda m\gamma(b_2) \\
     \nonumber
   &=&  \left[\begin{array}{cc}
                                         \lambda  D_\mathcal A(a_1)-\lambda m_1\gamma & \lambda\gamma a_1-\lambda b_1\gamma \\
                                            & \lambda D_\mathcal B(b_1)+\lambda\gamma m_1\\
                                         \end{array}
                                       \right]\Big{(}\left[\begin{array}{cc}
                                            a_2 & m_2 \\
                                            &  b_2\\
                                         \end{array}
                                       \right]\Big{)}\\
                                       &=&  \lambda D(T_1)(T_2).
\end{eqnarray}

Thus $\Psi(\lambda D)=\lambda\Psi(D)$. Obviously,
$\Psi(D_1+D_2)=\Psi(D_1)+\Psi(D_2)$ for all
$D_1,D_2\in\mathcal{Z}_\mathfrak T^1(\mathcal T,\mathcal
T^{(2n-1)})$. Hence, $\Psi$ is a linear map. Now, assume that
$D_\mathcal A\in\mathcal{Z}_\mathfrak A^1(\mathcal A,\mathcal
A^{(2n-1)})$ and $D_\mathcal B\in\mathcal{Z}_\mathfrak
A^1(\mathcal B,\mathcal B^{(2n-1)})$. Then Lemma \ref{po2} implies
that there exists a module derivation $D_{\mathcal A\mathcal B}$
such that
\begin{equation*}
    \Psi(D_{\mathcal A\mathcal B})=(D_\mathcal A+\mathcal{N}_\mathfrak A^1(\mathcal A,\mathcal A^{(2n-1)}),D_\mathcal B+\mathcal{N}_\mathfrak A^1(\mathcal B,\mathcal B^{(2n-1)})),
\end{equation*}
and this indicates that $\Psi$ is surjective. If $D\in\ker\Psi$,
then $\Psi(D)=0$ and thus $D_\mathcal A\in\mathcal{N}_\mathfrak
A^1(\mathcal A,\mathcal A^{(2n-1)})$ and $D_\mathcal
B\in\mathcal{N}_\mathfrak A^1(\mathcal B,\mathcal B^{(2n-1)})$.
Since $D_\mathcal A$ and $D_\mathcal B$ are inner, $D_{\mathcal
A\mathcal B}$ is an inner module derivation by Lemma \ref{po2}.
Therefore we can write
\begin{equation*}
    \left[\begin{array}{cc}
                                           D_\mathcal A(a)-m\gamma & \gamma a-b\gamma \\
                                            & D_\mathcal B(b)+\gamma m\\
                                         \end{array}
                                       \right]=\left[\begin{array}{cc}
                                           D_\mathcal A(a) &  \\
                                            & D_\mathcal B(b)\\
                                         \end{array}
                                       \right]+\left[\begin{array}{cc}
                                          - m\gamma & \gamma a-b\gamma \\
                                            & \gamma m\\
                                         \end{array}
                                       \right].
\end{equation*}

The above equality shows that $D$ is inner. Thus,
$\ker\Psi\subseteq\mathcal{N}_\mathfrak T^1(\mathcal T,\mathcal
T^{(2n-1)})$. On the other hand, if $D\in\mathcal{N}_\mathfrak T
^1(\mathcal T,\mathcal T^{(2n-1)})$, then $D_\mathcal A:\mathcal
A\longrightarrow\mathcal A^{(2n-1)}$ defined by $D_\mathcal
A(a)=\pi_\mathcal A(D\Big{(}\left[\begin{array}{cc}
                                            a & 0 \\
                                            &  0\\
                                         \end{array}
                                       \right]\Big{)})$ and $D_\mathcal B:\mathcal B\longrightarrow\mathcal B^{(2n-1)}$ defined by $D_\mathcal B(b)=\pi_\mathcal B(D\Big{(}\left[\begin{array}{cc}
                                            0 & 0 \\
                                            &  b\\
                                         \end{array}
                                       \right]\Big{)})$ are inner module derivations. Therefore $\Psi(D)=0$, and so $\mathcal{N}_\mathfrak T^1(\mathcal T,\mathcal T^{(2n-1)})\subseteq\ker\Psi$. Finally, we have
\begin{eqnarray*}
% \nonumber to remove numbering (before each equation)
    \mathcal{H}_\mathfrak T^1(\mathcal T,\mathcal T^{(2n-1)})&=&  \mathcal{Z}_\mathfrak T^1(\mathcal T,\mathcal T^{(2n-1)})/
    \mathcal{N}_\mathfrak T^1(\mathcal T,\mathcal T^{(2n-1)})\\
    &=&\mathcal{Z}_\mathfrak T^1(\mathcal T,\mathcal T^{(2n-1)})/\ker\Psi\\
   &\simeq&  \emph{\emph{Im}}\Psi=\mathcal{H}_\mathfrak A^1(\mathcal A,\mathcal A^{(2n-1)})\oplus\mathcal{H}_\mathfrak A^1(\mathcal B, \mathcal B^{(2n-1)}).
\end{eqnarray*}
\end{proof}
%--------------------------------------------------------------------------------------------------------------------------------
%------------------------------------------------------------------------------------------------------------------------------

\section{(2n)-Weak Module Amenability}

As it is seen in the previous section, $(2n-1)$-weak module
amenability of a triangular Banach algebra $\mathcal T$ depends
on $(2n-1)$-weak module amenability of Banach algebras
$\mathcal A$ and $\mathcal B$ while this fails to be true in
the even case in general. We need the following lemma which is analogous to
\cite[Proposition 3.9]{fo2} in the module case. We include the
proof.
\begin{lemma}\label{l2.1}
Let $D:\mathcal T\longrightarrow\mathcal T^{(2n)}$ be a
continuous module derivation. Then there exist
$\gamma\in\mathcal{M}^{(2n)}$, continuous module derivations
$D_\mathcal A:\mathcal{A}\longrightarrow\mathcal A^{(2n)}$,
$D_\mathcal B:\mathcal{B}\longrightarrow \mathcal B^{(2n)}$ and a
continuous $\mathfrak A$-module map $\rho
:\mathcal{M}\longrightarrow \mathcal{M}^{(2n)}$ such that
\begin{itemize}
  \item[(i)]$D\Big{(} \left[
         \begin{array}{rr}
              a & 0 \\
                & 0
          \end{array} \right]\Big{)}=\left[
         \begin{array}{rr}
              D_\mathcal A(a) & a\cdot \gamma \\
                & 0
          \end{array} \right]$,\quad  \emph{(}$a\in\mathcal{A}$\emph{)};
  \item[(ii)]$D \Big{(}\left[
         \begin{array}{rr}
              0 & 0 \\
                & b
          \end{array} \right]\Big{)}=\left[
         \begin{array}{rr}
              0 & -\gamma \cdot b \\
                & D_\mathcal B(b)
          \end{array} \right]$,\quad  \emph{(}$b\in\mathcal{B}$\emph{)};
  \item[(iii)]$D\Big{(}\left[
         \begin{array}{rr}
             0  & m \\
                & 0
          \end{array} \right]\Big{)}=\left[
         \begin{array}{rr}
             0  & \rho(m) \\
                & 0
          \end{array} \right]$, \quad \emph{(}$m\in\mathcal{M}$\emph{)};
     \item[(iv)]$\rho(a\cdot m)=D_\mathcal A(a)\cdot m+a\cdot \rho(m)$, \quad  \emph{(}$a\in\mathcal{A}, m\in\mathcal{M}$\emph{)};
     \item[(v)]$\rho(m\cdot b)=\rho(m)\cdot b+m\cdot D_\mathcal B(b)$ \quad  \emph{(}$a\in\mathcal{A}, m\in\mathcal{M}$\emph{)};
     \item[(vi)]If $D_\mathcal A:\mathcal{A}\longrightarrow\mathcal A^{(2n)}$ and $D_\mathcal B:\mathcal{B}\longrightarrow\mathcal B^{(2n)}$
     are continuous module derivations and $\rho_\mathcal{M}:\mathcal{M}\longrightarrow\mathcal{M}^{(2n)}$ is a continuous $\mathfrak A$-module map
     that satisfies $(\emph{iv})$ and $(\emph{v})$. Then $D:\mathcal T\rightarrow\mathcal T^{(2n)}$ defined by $ \left[
         \begin{array}{rr}
             a  & m \\
                & b
          \end{array} \right]\longmapsto\left[
         \begin{array}{rr}
            D_\mathcal A(a)  & \rho_\mathcal{M}(m) \\
                & D_\mathcal B(b)
          \end{array} \right]$ is a continuous module derivation.

\end{itemize}
\end{lemma}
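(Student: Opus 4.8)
The plan is to establish the six items by starting from a continuous module derivation $D:\mathcal T\longrightarrow\mathcal T^{(2n)}$ and decomposing its action on the three ``corner'' subspaces of $\mathcal T$, exactly as in the proof of Lemma \ref{po1} but now in the even-dual setting where (unlike the odd case) the bimodule $\mathcal T^{(2n)}$ splits as an $\ell^\infty$-sum rather than an $\ell^1$-sum. Write $D\big(\left[\begin{smallmatrix} a & 0 \\ & 0\end{smallmatrix}\right]\big)=\left[\begin{smallmatrix} D_\mathcal A(a) & \eta(a) \\ & \mu(a)\end{smallmatrix}\right]$ and compute $D$ of the product $\left[\begin{smallmatrix} a & 0 \\ & 0\end{smallmatrix}\right]\left[\begin{smallmatrix} a' & 0 \\ & 0\end{smallmatrix}\right]$ two ways using the derivation identity and the module-action formulas for $\mathcal T$ on $\mathcal T^{(2n)}$ displayed earlier. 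The $(1,1)$-entry forces $D_\mathcal A$ to be a module derivation into $\mathcal A^{(2n)}$, the $(2,2)$-entry forces $\mu(aa')=\mu(a')\cdot\text{(something)}$ which pins $\mu\equiv0$, and the $(1,2)$-entry gives a cocycle-type identity for $\eta:\mathcal A\to\mathcal M^{(2n)}$ with respect to left multiplication, which is solved by a single element $\gamma\in\mathcal M^{(2n)}$ via $\eta(a)=a\cdot\gamma$; that yields (i). The symmetric computation on $\left[\begin{smallmatrix} 0 & 0 \\ & b\end{smallmatrix}\right]$ gives $D_\mathcal B$ and, crucially, the \emph{same} $\gamma$ (after matching the mixed terms arising when one multiplies an upper-left by a lower-right block), which is (ii).

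Next I would handle the middle corner. Since $\left[\begin{smallmatrix} 0 & m \\ & 0\end{smallmatrix}\right]\left[\begin{smallmatrix} 0 & m' \\ & 0\end{smallmatrix}\right]=0$, the map $\rho:\mathcal M\to\mathcal M^{(2n)}$ defined by taking the $(1,2)$-entry of $D\big(\left[\begin{smallmatrix} 0 & m \\ & 0\end{smallmatrix}\right]\big)$ must be such that the remaining entries vanish; to see that $D\big(\left[\begin{smallmatrix} 0 & m \\ & 0\end{smallmatrix}\right]\big)$ has no $(1,1)$- or $(2,2)$-component one multiplies $\left[\begin{smallmatrix} 0 & m \\ & 0\end{smallmatrix}\right]$ on the left by $\left[\begin{smallmatrix} a & 0 \\ & 0\end{smallmatrix}\right]$ (or uses an approximate identity / the non-degeneracy already invoked in Proposition \ref{prop1} if $\mathcal A$ is not unital — though here we have freedom to state the lemma under whatever hypothesis the authors used), giving $0$ on both diagonal slots; this is (iii). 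Being bounded and additive and commuting with the $\mathfrak A$-action, $\rho$ is a continuous $\mathfrak A$-module map. For (iv) and (v), compute $D$ of $\left[\begin{smallmatrix} a & 0 \\ & 0\end{smallmatrix}\right]\left[\begin{smallmatrix} 0 & m \\ & 0\end{smallmatrix}\right]=\left[\begin{smallmatrix} 0 & a\cdot m \\ & 0\end{smallmatrix}\right]$ and of $\left[\begin{smallmatrix} 0 & m \\ & 0\end{smallmatrix}\right]\left[\begin{smallmatrix} 0 & 0 \\ & b\end{smallmatrix}\right]=\left[\begin{smallmatrix} 0 & m\cdot b \\ & 0\end{smallmatrix}\right]$ by the derivation rule, read off the $(1,2)$-entries using (i)--(iii), and the stated identities drop out directly.

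Finally, (vi) is the converse direction and is a routine verification: given $D_\mathcal A$, $D_\mathcal B$ module derivations and $\rho_\mathcal M$ a module map satisfying (iv)--(v), one checks that the prescribed block-diagonal map satisfies $D(\mathfrak t_1\mathfrak t_2)=D(\mathfrak t_1)\cdot\mathfrak t_2+\mathfrak t_1\cdot D(\mathfrak t_2)$ by expanding both sides entrywise; the $(1,1)$- and $(2,2)$-entries reduce to the derivation identities for $D_\mathcal A$ and $D_\mathcal B$, while the $(1,2)$-entry reduces precisely to (iv) plus (v) applied to $a_1\cdot m_2$ and $m_1\cdot b_2$. The $\mathfrak A$-module property of $D$ is immediate from that of its three components, and boundedness is clear. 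I expect the only genuinely delicate point to be extracting the \emph{single common} $\gamma$ in (i) and (ii): one must verify that the element produced from the $\left[\begin{smallmatrix} a & 0 \\ & 0\end{smallmatrix}\right]$-computation and the one from the $\left[\begin{smallmatrix} 0 & 0 \\ & b\end{smallmatrix}\right]$-computation agree, which uses the mixed product $\left[\begin{smallmatrix} a & 0 \\ & 0\end{smallmatrix}\right]\left[\begin{smallmatrix} 0 & 0 \\ & b\end{smallmatrix}\right]=0$ together with non-degeneracy of $\mathcal M^{(2n)}$ (or a bounded approximate identity) to cancel and identify the terms; everything else is bookkeeping with the $2\times2$ block multiplication.
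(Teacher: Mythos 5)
Your proposal is correct in substance, but it follows a genuinely different route from the paper. The paper does not re-derive the block decomposition at all: it simply invokes \cite[Proposition 3.9]{fo2} for the existence of $D_\mathcal A$, $D_\mathcal B$, $\rho$, $\gamma$ and the identities (i)--(vi), and its entire written proof is the module-theoretic supplement, namely a duality computation against elements $\bigl[\begin{smallmatrix}\alpha & \\ & \alpha\end{smallmatrix}\bigr]\in\mathfrak T$ showing $D_\mathcal A(\alpha\cdot a)=\alpha\cdot D_\mathcal A(a)$ (and its analogues for $D_\mathcal B$ and $\rho$), together with the verification in (vi) that the block-diagonal map is an $\mathfrak T$-module map. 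You do the opposite: you reconstruct the Forrest--Marcoux decomposition from scratch by expanding the derivation identity on the three corners (your corner computations are right: the $(1,2)$-entry of $D$ on the $\mathcal A$-corner satisfies $\eta(aa')=a\cdot\eta(a')$, so $\gamma=\eta(1_{\mathcal A})$ works, the mixed product identifies the two candidate $\gamma$'s, and (vi) reduces entrywise to (iv) and (v)), while treating the $\mathfrak A$-module-map property of $D_\mathcal A$, $D_\mathcal B$, $\rho$ as immediate --- which it is, but it is precisely the part the paper writes out, since module derivations are only additive and bounded, not linear. Your route buys self-containedness and shows that the argument of \cite{fo2} uses no scalar linearity, so the citation is legitimate in the module setting; the paper's route is much shorter. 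Two points to tighten: the working hypothesis for this lemma is unitality of $\mathcal A$, $\mathcal B$ and $\mathcal M$ (as in Theorem \ref{T5.2} and \cite{fo2}; the non-unital variant with bounded approximate identity and non-degeneracy is exactly Proposition \ref{P5}), and to kill both diagonal entries of $D$ on the $\mathcal M$-corner you need both the left product with the $\mathcal A$-corner and the right product with the $\mathcal B$-corner --- left multiplication alone only forces the $(2,2)$-entry to vanish.
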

\begin{proof}
In the light of  \cite[Proposition 3.9]{fo2}, we just show that
these maps are module maps. Let $D$ be a $\mathfrak T$-module
map. Then
\begin{eqnarray*}
% \nonumber to remove numbering (before each equation)
   \langle a_2,\alpha\cdot D_\mathcal A(a_1)\rangle&=& \langle a_2\cdot\alpha,D_\mathcal A(a_1)\rangle=\langle a_2\cdot\alpha,D_\mathcal A(a_1)\rangle+\langle 0, a_1\cdot\gamma\rangle\\
   &=& \langle \left[
         \begin{array}{rr}
              a_2\cdot\alpha & 0 \\
                & 0
          \end{array} \right], \left[
         \begin{array}{rr}
              D_\mathcal A(a_1) & a_1\cdot\rho\\
                & 0
          \end{array} \right]\rangle \\
             &=& \langle \left[
         \begin{array}{rr}
              a_2 & 0 \\
                & 0
          \end{array} \right]\cdot\left[
         \begin{array}{rr}
              \alpha & 0 \\
                & \alpha
          \end{array} \right],D\Big{(} \left[
         \begin{array}{rr}
              a_1 & 0 \\
                & 0
          \end{array} \right]\Big{)}\rangle \\
                &=&  \langle \left[
         \begin{array}{rr}
              a_2 & 0 \\
                & 0
          \end{array} \right],D\Big{(} \left[
         \begin{array}{rr}
              \alpha & 0 \\
                & \alpha
          \end{array} \right]\cdot\left[
         \begin{array}{rr}
              a_1 & 0 \\
                & 0
          \end{array} \right]\Big{)}\rangle\\
             &=&  \langle \left[
         \begin{array}{rr}
              a_2 & 0 \\
                & 0
          \end{array} \right],D\Big{(}\left[
         \begin{array}{rr}
             \alpha\cdot a_1 & 0 \\
                & 0
          \end{array} \right]\Big{)}\rangle\\
             &=& \langle \left[
         \begin{array}{rr}
              a_2 & 0 \\
                & 0
          \end{array} \right], \left[
         \begin{array}{rr}
              D_\mathcal A(\alpha a_1) & \alpha\cdot a_1\cdot\rho\\
                & 0
          \end{array} \right]\rangle\\
             &=& \langle a_2,D_\mathcal A(\alpha\cdot a_1)\rangle+\langle 0, \alpha\cdot a_1\cdot\gamma\rangle=\langle a_2, D_\mathcal A(\alpha\cdot a_1)\rangle,
\end{eqnarray*}
for all $a_1,a_2\in\mathcal A$ and $\alpha\in\mathfrak A$. This
means that $D_\mathcal A(\alpha\cdot a)=\alpha\cdot D_\mathcal
A(a)$, for all $a\in\mathcal A$ and $\alpha\in\mathfrak A$. Similarly we can show that $D_\mathcal
A(a\cdot\alpha)=D_\mathcal A(a)\cdot\alpha$ and
$D_\mathcal B$ and $\rho$ are $\mathfrak A$-module maps.
Therefore the assertions (i)-(v) hold. For (vi), suppose that
$D_\mathcal A$, $D_\mathcal B$ and $\rho_\mathcal M$ are
$\mathfrak A$-module maps. We show that $D$ is a $\mathfrak
T$-module map. Given $T_1=\left[\begin{array}{cc}
                                           a_1 & m_1 \\
                                            &  b_1\\
                                         \end{array}
                                       \right], T_2=\left[\begin{array}{cc}
                                           a_2 & m_2 \\
                                            &  b_2\\
                                         \end{array}
                                       \right]\in\mathcal T$ and $\Upsilon=\left[
         \begin{array}{rr}
              \alpha & 0 \\
                & \alpha
          \end{array} \right]\in\mathfrak T$, we have
          \begin{eqnarray*}
          % \nonumber to remove numbering (before each equation)
            \langle T_2,\Upsilon\cdot D(T_1)\rangle &=& \langle T_2\cdot \Upsilon,  D(T_1)\rangle \\
            &=& \langle \left[\begin{array}{cc}
                                           a_2\cdot\alpha & m_2\cdot\alpha \\
                                            &  b_2\cdot\alpha\\
                                         \end{array}
                                       \right], \left[
         \begin{array}{rr}
            D_\mathcal A(a_1)  & \rho_\mathcal{M}(m_1) \\
                & D_\mathcal B(b_1)
          \end{array} \right]\rangle\\
             &=&  \langle  a_2\cdot\alpha,  D_\mathcal A(a_1)\rangle+\langle m_2\cdot\alpha, \rho_\mathcal{M}(m_1)\rangle+\langle b_2\cdot\alpha, D_\mathcal B(b_1)\rangle\\
             &=&   \langle  a_2,  D_\mathcal A(\alpha\cdot a_1)\rangle+\langle m_2, \rho_\mathcal{M}(\alpha\cdot m_1)\rangle+\langle b_2, D_\mathcal B(\alpha\cdot b_1)\rangle\\
             &=&  \langle \left[\begin{array}{cc}
                                           a_2 & m_2 \\
                                            &  b_2\\
                                         \end{array}
                                       \right], \left[
         \begin{array}{rr}
            D_\mathcal A(\alpha\cdot a_1)  & \rho_\mathcal{M}(\alpha\cdot m_1) \\
                & D_\mathcal B(\alpha\cdot b_1)
          \end{array} \right]\rangle\\
             &=& \langle \left[\begin{array}{cc}
                                           a_2 & m_2 \\
                                            &  b_2\\
                                         \end{array}
                                       \right], D\Big{(} \left[
         \begin{array}{rr}
              \alpha & 0 \\
                & \alpha
          \end{array} \right]\cdot\left[
         \begin{array}{rr}
              a_1 & m_1 \\
                & b_1
          \end{array} \right]\Big{)}\rangle \\
          &=& \langle T_2, D(\Upsilon\cdot T_1)\rangle.
          \end{eqnarray*}

Therefore $D(\Upsilon\cdot T_1)=\Upsilon\cdot D(T_1)$. Similarly we obtain
$D(T_1\cdot\Upsilon)= D(T_1)\cdot\Upsilon$. Thus, $D$ is a
$\mathfrak T$-module map.
\end{proof}
The following sets which are used in this
section are introduced in \cite{fo2}. For each positive
integer $n$, we define the centralizer of $\mathcal{A}$ in
$\mathcal A^{(2n)}$ by $Z_\mathcal{A}(\mathcal
A^{(2n)})=\{x\in\mathcal A^{(2n)}: x\cdot a=a\cdot x$ for all
$a\in\mathcal{A}\}$ and similarly,
$Z_\mathcal{B}(\mathcal{B}^{(2n)})=\{z\in\mathcal{B}^{(2n)}:
z\cdot b=b\cdot z$ for all $b\in\mathcal{B}\}$. The elements of
$$ZR_{\mathfrak A,\mathcal{A},\mathcal{B}}(\mathcal{M},\mathcal{M}^{(2n)})=\{\rho_{x,z}:\mathcal{M}\rightarrow\mathcal{M}^{(2n)}: x\in Z_\mathcal{A}(\mathcal{A}^{(2n)}), z\in Z_\mathcal{B}(\mathcal{B}^{(2n)})\}$$
are called {\it central Rosenblum operators} on $\mathcal{M}$ with
coefficients in $\mathcal{M}^{(2n)}$ in which
$\rho_{x,z}(m)=x\cdot m-m\cdot z$ is an $\mathfrak A$-module map.
We also define the following set
$$  {\emph{\emph{Hom}}}_{\mathfrak A,\mathcal{A},\mathcal{B}}(\mathcal{M},\mathcal{M}^{(2n)}) = \{\varphi:\mathcal{M}\rightarrow\mathcal{M}^{(2n)}: \varphi(a\cdot m)=a\cdot \varphi(m), ~\varphi(m\cdot b)=\varphi(m)\cdot b,$$ 
$$\varphi(\alpha\cdot m)=\alpha\cdot \varphi(m), ~\varphi(m\cdot \alpha)=\varphi(m)\cdot \alpha , ~~{\emph{\emph{for}}}~~ {\emph{\emph{all}}}~~ \alpha\in\mathfrak A, a\in \mathcal{A}, m\in\mathcal{M}, b\in\mathcal{B} \}.$$

The following theorem is analogous to
Lemma 3.11 and Theorem 3.12 from \cite{fo2} in a more general setting (part (iii) is a module version of \cite[Theorem 3.2]{me}).

\begin{theorem}\label{T5.2}
Let $\mathcal{A}$ and $\mathcal{B}$ be unital Banach algebras and
$\mathcal{M}$ be a unital Banach
$\mathcal{A},\mathcal{B}$-module. Then we have
\begin{itemize}
  \item[(i)] $ZR_{\mathfrak A,\mathcal A,\mathcal B}(\mathcal M,\mathcal M^{(2n)})\subseteq \emph{Hom}_{\mathfrak A,\mathcal{A},\mathcal{B}}(\mathcal M,\mathcal M^{(2n)}),$       
    \item[(ii)]If $\Omega\in \emph{Hom}_{\mathfrak T,\mathcal{A},\mathcal{B}}\Big{(}\left[
                              \begin{array}{rr}
                                  0 & \mathcal M \\
                                     & 0
                               \end{array} \right],\left[
                              \begin{array}{rr}
                                  0 & \mathcal M ^{(2n)}\\
                                     & 0
                               \end{array} \right]\Big{)},$ then the map
$$\Delta_\Omega\left[
         \begin{array}{rr}
             a  & m \\
                & b
          \end{array} \right]=\Omega\Big{(}\left[
         \begin{array}{rr}
            0  & m\\
                & 0
          \end{array} \right]\Big{)}=\left[
           \begin{array}{rr}
           0  & \Omega_\mathcal M(m)\\
             & 0
             \end{array} \right]\in Z_\mathfrak T^1(\mathcal T,\mathcal T^{(2n)}),$$ where $\Omega_\mathcal M\in \emph{Hom}_{\mathfrak A,\mathcal{A},\mathcal{B}}(\mathcal{M},\mathcal{M}^{(2n)})$.
 Furthermore $\Delta_\Omega$ is inner if and only if $\Omega$ is a central Rosenblum operator on $\mathcal M $ with coefficients
 in $\mathcal M^{(2n)}$.
\item[(iii)]If $\mathcal{A}$ and $\mathcal{B}$ are $(2n)$-weakly module amenable as $\mathfrak T$-modules, then
$$H_\mathfrak T^1(\mathcal T,\mathcal T^{(2n)})\simeq  \emph{Hom}_{\mathfrak T,\mathcal{A},\mathcal{B}}\Big{(}\left[
                               \begin{array}{rr}
                                   0 & \mathcal M \\
                                      & 0
                                \end{array} \right],\left[
                               \begin{array}{rr}
                                   0 & \mathcal M ^{(2n)}\\
                                      & 0
                                \end{array} \right]\Big{)}\Big{/}$$ $$ZR_{\mathfrak T,\mathcal{A},\mathcal{B}}\Big{(}\left[ \begin{array}{rr}
                                                                     0 & \mathcal M \\
                                                                        & 0
                                                                  \end{array} \right], \left[
                                                                  \begin{array}{rr}
                                                                                           0 & \mathcal M ^{(2n)}\\
                                                                                              & 0
                                                                                        \end{array} \right]\Big{)}.$$ In the other words,
$$H_\mathfrak T^1(\mathcal T,\mathcal T^{(2n)})\simeq  \emph{Hom}_{\mathfrak A,\mathcal{A},\mathcal{B}}(\mathcal{M},\mathcal{M}^{(2n)})\big{/}ZR_{\mathfrak A,\mathcal{A},\mathcal{B}}(\mathcal{M},\mathcal{M}^{(2n)}).$$
\end{itemize}
\end{theorem}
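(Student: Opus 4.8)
The plan is to treat the three parts in order; part (ii) carries the bulk of the work, and part (iii) is an assembly argument built on Lemma~\ref{l2.1} together with the hypothesis of $(2n)$-weak module amenability of $\mathcal A$ and $\mathcal B$.

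\emph{Part (i).} Fix $x\in Z_\mathcal A(\mathcal A^{(2n)})$ and $z\in Z_\mathcal B(\mathcal B^{(2n)})$. I would verify the four defining identities of $\mathrm{Hom}_{\mathfrak A,\mathcal A,\mathcal B}(\mathcal M,\mathcal M^{(2n)})$ for $\rho_{x,z}(m)=x\cdot m-m\cdot z$ one at a time: the identity $\rho_{x,z}(a\cdot m)=a\cdot\rho_{x,z}(m)$ follows from $x\cdot a=a\cdot x$ together with the module associativity $(a\cdot m)\cdot z=a\cdot(m\cdot z)$; the $\mathcal B$-linearity identity is symmetric, using $z\cdot b=b\cdot z$; and the two $\mathfrak A$-linearity identities are exactly the assertion (already recorded in the paragraph preceding the theorem) that $\rho_{x,z}$ is an $\mathfrak A$-module map, which comes from the compatibility relations \eqref{1e2}--\eqref{1e3} and the commutativity of $\mathcal A^{(2n)}$, $\mathcal B^{(2n)}$, $\mathcal M^{(2n)}$ as $\mathfrak A$-modules. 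This is routine.

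\emph{Part (ii).} First I would unwind the matrix module actions of $\mathcal T$ and $\mathfrak T$ on $\mathcal T^{(2n)}$ to see that an element $\Omega$ of $\mathrm{Hom}_{\mathfrak T,\mathcal A,\mathcal B}\big(\left[\begin{smallmatrix}0&\mathcal M\\&0\end{smallmatrix}\right],\left[\begin{smallmatrix}0&\mathcal M^{(2n)}\\&0\end{smallmatrix}\right]\big)$ is completely determined by its $(1,2)$-component $\Omega_\mathcal M$, and that the module-map conditions on $\Omega$ are precisely equivalent to $\Omega_\mathcal M\in\mathrm{Hom}_{\mathfrak A,\mathcal A,\mathcal B}(\mathcal M,\mathcal M^{(2n)})$; this simultaneously produces the natural identification of $\mathrm{Hom}_{\mathfrak T,\mathcal A,\mathcal B}\big(\left[\begin{smallmatrix}0&\mathcal M\\&0\end{smallmatrix}\right],\left[\begin{smallmatrix}0&\mathcal M^{(2n)}\\&0\end{smallmatrix}\right]\big)$ with $\mathrm{Hom}_{\mathfrak A,\mathcal A,\mathcal B}(\mathcal M,\mathcal M^{(2n)})$ used in the last line of (iii). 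A direct $2\times2$ computation of $\Delta_\Omega(T_1T_2)$ against $\Delta_\Omega(T_1)\cdot T_2+T_1\cdot\Delta_\Omega(T_2)$, using only $\Omega_\mathcal M(a\cdot m)=a\cdot\Omega_\mathcal M(m)$ and $\Omega_\mathcal M(m\cdot b)=\Omega_\mathcal M(m)\cdot b$, then shows $\Delta_\Omega\in\mathcal Z_\mathfrak T^1(\mathcal T,\mathcal T^{(2n)})$; that $\Delta_\Omega$ is a $\mathfrak T$-module map follows exactly as in Lemma~\ref{l2.1}(vi) from the $\mathfrak A$-linearity of $\Omega_\mathcal M$. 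For the innerness assertion I would take a general $\Theta=\left[\begin{smallmatrix}\lambda&\eta\\&\mu\end{smallmatrix}\right]\in\mathcal T^{(2n)}$, compute $D_\Theta(T)=T\cdot\Theta-\Theta\cdot T$ entrywise, and compare with $\left[\begin{smallmatrix}0&\Omega_\mathcal M(m)\\&0\end{smallmatrix}\right]$: the diagonal entries force $\lambda\in Z_\mathcal A(\mathcal A^{(2n)})$ and $\mu\in Z_\mathcal B(\mathcal B^{(2n)})$, evaluating the $(1,2)$-entry at $m=0$ and using that $\mathcal A$, $\mathcal B$ are unital forces $\eta=0$, and the remaining identity reads $\Omega_\mathcal M(m)=m\cdot\mu-\lambda\cdot m=\rho_{-\lambda,-\mu}(m)$, i.e.\ $\Omega_\mathcal M$ is a central Rosenblum operator. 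I expect the off-diagonal cancellation (using unitality) to be the only delicate point here.

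\emph{Part (iii).} Let $D:\mathcal T\to\mathcal T^{(2n)}$ be a continuous module derivation. By Lemma~\ref{l2.1} there are $\gamma\in\mathcal M^{(2n)}$, module derivations $D_\mathcal A$, $D_\mathcal B$ and an $\mathfrak A$-module map $\rho:\mathcal M\to\mathcal M^{(2n)}$ with $D\left[\begin{smallmatrix}a&m\\&b\end{smallmatrix}\right]=\left[\begin{smallmatrix}D_\mathcal A(a)&a\cdot\gamma-\gamma\cdot b+\rho(m)\\&D_\mathcal B(b)\end{smallmatrix}\right]$, where $\rho$ satisfies (iv) and (v). Since $\mathcal A$, $\mathcal B$ are unital commutative $\mathfrak A$-modules one has $J_\mathcal A=J_\mathcal B=\{0\}$, so the $(2n)$-weak module amenability of $\mathcal A$ and $\mathcal B$ gives $D_\mathcal A=D_{x_0}$ and $D_\mathcal B=D_{z_0}$ for some $x_0\in\mathcal A^{(2n)}$, $z_0\in\mathcal B^{(2n)}$. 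Subtracting the inner derivation $D_{\Theta_0}$ with $\Theta_0=\left[\begin{smallmatrix}x_0&\gamma\\&z_0\end{smallmatrix}\right]$, a short matrix computation yields $D-D_{\Theta_0}=\Delta_\Omega$ with $\Omega_\mathcal M=\rho+\rho_{x_0,z_0}$, where $\rho_{x_0,z_0}(m)=x_0\cdot m-m\cdot z_0$. The key verification is that $\Omega_\mathcal M\in\mathrm{Hom}_{\mathfrak A,\mathcal A,\mathcal B}(\mathcal M,\mathcal M^{(2n)})$: using (iv) and $D_\mathcal A(a)=a\cdot x_0-x_0\cdot a$ one gets $D_\mathcal A(a)\cdot m+x_0\cdot(a\cdot m)=a\cdot(x_0\cdot m)$, and hence $\Omega_\mathcal M(a\cdot m)=a\cdot\Omega_\mathcal M(m)$; the $\mathcal B$-side is symmetric via (v), and the $\mathfrak A$-linearity is immediate from part (i) applied to $\rho_{x_0,z_0}$ together with Lemma~\ref{l2.1}. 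Consequently $\Omega_\mathcal M\mapsto[\Delta_\Omega]$ is a well-defined linear map $\mathrm{Hom}_{\mathfrak A,\mathcal A,\mathcal B}(\mathcal M,\mathcal M^{(2n)})\to\mathcal{H}_\mathfrak T^1(\mathcal T,\mathcal T^{(2n)})$, surjective by the preceding sentence, with kernel exactly $ZR_{\mathfrak A,\mathcal A,\mathcal B}(\mathcal M,\mathcal M^{(2n)})$ by the innerness criterion in part (ii) (noting $ZR\subseteq\mathrm{Hom}$ from part (i)). This gives $\mathcal{H}_\mathfrak T^1(\mathcal T,\mathcal T^{(2n)})\simeq\mathrm{Hom}_{\mathfrak A,\mathcal A,\mathcal B}(\mathcal M,\mathcal M^{(2n)})/ZR_{\mathfrak A,\mathcal A,\mathcal B}(\mathcal M,\mathcal M^{(2n)})$, and transporting through the identifications from part (ii) gives the matrix-form isomorphism. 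The main obstacle, as in (ii), is choosing the inner derivation to subtract so that the ``derivation defect'' of $\rho$ encoded in (iv)--(v) cancels and leaves a genuine module homomorphism.
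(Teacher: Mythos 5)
Your proposal is correct and follows essentially the same route as the paper: the decomposition of a derivation via Lemma \ref{l2.1}, the use of $(2n)$-weak module amenability to write $D_\mathcal A=D_{x_0}$, $D_\mathcal B=D_{z_0}$, subtraction of the inner derivation induced by $\left[\begin{smallmatrix} x_0 & \gamma\\ & z_0\end{smallmatrix}\right]$ so that the residual derivation is $\Delta_\Omega$ with $\Omega_\mathcal M=\rho+\rho_{x_0,z_0}$, and identification of the kernel of $\Omega\mapsto\overline{\Delta}_\Omega$ with the central Rosenblum operators via (ii). The only difference is that you verify explicitly the routine computations for (i), (ii) and the innerness criterion (which the paper delegates to Forrest--Marcoux), and you correctly take $x_0\in\mathcal A^{(2n)}$, $z_0\in\mathcal B^{(2n)}$ where the paper has a small misprint.
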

\begin{proof}
For statements (i) and (ii),   we only need to show that
$\Delta_\varphi$ is an $\mathfrak T$-module map. The rest of the
proof is the same as \cite[Lemma 3.11]{fo2}. For given
$T=\left[\begin{array}{cc}
                                           a & m \\
                                            &  b\\
                                         \end{array}
                                       \right]\in\mathcal T$ and $\Upsilon=\left[
         \begin{array}{rr}
              \alpha & 0 \\
                & \alpha
          \end{array} \right]\in\mathfrak T$, we have
           \begin{eqnarray*}
          \Delta_\Omega\big{(}\Upsilon\cdot T\big{)} &=& \Delta_\Omega\Big{(}\left[
         \begin{array}{rr}
              \alpha & 0 \\
                & \alpha
          \end{array} \right]\cdot\left[\begin{array}{cc}
                                           a & m \\
                                            &  b\\
                                         \end{array}
                                       \right]\Big{)}=\Delta_\Omega\Big{(}\left[\begin{array}{cc}
                                           \alpha\cdot a &\alpha\cdot m \\
                                            & \alpha\cdot b\\
                                         \end{array}
                                       \right]\Big{)} \\
             &=&  \left[\begin{array}{cc}
                                           0 & \alpha\cdot \Omega_\mathcal M(m) \\
                                            &  0\\
                                         \end{array}
                                       \right]=\left[
         \begin{array}{rr}
              \alpha & 0 \\
                & \alpha
          \end{array} \right]\cdot\left[\begin{array}{cc}
                                           0 &  \Omega_\mathcal M(m) \\
                                            &  0\\
                                         \end{array}
                                       \right]=\Upsilon\cdot\Delta_\Omega(T),
          \end{eqnarray*}
  and similarly $\Delta_\Omega(T\cdot\Upsilon)=\Delta_\Omega(T)\cdot\Upsilon$.

           For (iii), the proof is similar to \cite[Theorem 3.12]{fo2} but we give the proof for the sake of completeness. Consider
           $$\mathfrak{F}:{\text{Hom}}_{\mathfrak T,\mathcal{A},\mathcal{B}}\Big{(}\left[
         \begin{array}{rr}
             0 & \mathcal M \\
                & 0
          \end{array} \right],\left[
         \begin{array}{rr}
             0 & \mathcal M ^{(2n)}\\
                & 0
          \end{array} \right]\Big{)}\rightarrow H_\mathfrak T^1(\mathcal T,\mathcal T^{(2n)})$$ defined by $\Omega\mapsto \overline{\Delta}_\Omega$, where
          $\overline{{\Delta}}_\Omega$ denotes the equivalence class of ${\Delta}_\Omega$ in $H_\mathfrak T^1(\mathcal T,\mathcal T^{(2n)})$.
           It follows from (ii) that 
          $\mathfrak{F}$ is an $\mathfrak T$-module map. Let $D:\mathcal T\longrightarrow\mathcal T^{(2n)}$ be a module derivation. By Lemma \ref{l2.1} there are module
          derivations $D_\mathcal A:\mathcal A\longrightarrow\mathcal A^{(2n)}$, $D_\mathcal B:\mathcal B\longrightarrow\mathcal B^{(2n)}$, and a $\mathfrak A$-module map $\rho:\mathcal M\longrightarrow\mathcal M^{(2n)}$ and
          an element $\gamma\in\mathcal M^{(2n)}$ such that

          \begin{equation}\label{e2}
            D\Big{(}\left[\begin{array}{cc}
                                           a & m \\
                                            &  b\\
                                         \end{array}
                                       \right]\Big{)}=\left[\begin{array}{cc}
                                           D_\mathcal A(a) &a\cdot\gamma-\gamma\cdot b+\rho(m) \\
                                            & D_\mathcal B(b)\\
                                         \end{array}
                                       \right].
          \end{equation}

          Since $\mathcal A$ and $\mathcal B$ both are $(2n)$-weakly module amenable, there exist $x\in\mathcal A$ and $y\in\mathcal B$
          such that $D_\mathcal A(a)=a\cdot x-x\cdot a=D_x(a)$ and $D_\mathcal B(b)=b\cdot y-y\cdot b=D_y(b)$. Define $D_0:\mathcal T\longrightarrow\mathcal T^{(2n)}$ as follows:
          \begin{equation}\label{e3}
            D_0\Big{(}\left[\begin{array}{cc}
                                           a & m \\
                                            &  b\\
                                         \end{array}
                                       \right]\Big{)}=\left[\begin{array}{cc}
                                           D_x(a) &a\cdot\gamma-\gamma\cdot b-\rho_{x,y}(m) \\
                                            & D_y(b)\\
                                         \end{array}
                                       \right].
          \end{equation}

          It is easy to see that $D_0$ is an inner $\mathfrak T$-module derivation induced by $\left[\begin{array}{cc}
                                           x & \gamma \\
                                            &  y\\
                                         \end{array}
                                       \right]$ (note that in the proof of \cite[Theorem 3.12]{fo2}, there is a misprint). Set $D_1=D-D_0$. Thus $\overline{D}_1=\overline{D}$. Then

          \begin{equation}\label{e4}
              D_1\Big{(}\left[\begin{array}{cc}
                                           a & m \\
                                            &  b\\
                                         \end{array}
                                       \right]\Big{)}=\left[\begin{array}{cc}
                                           0 &\rho(m)+\rho_{x,y}(m) \\
                                            & 0\\
                                         \end{array}
                                       \right],
          \end{equation}
          for all $\left[\begin{array}{cc}
                                           a & m \\
                                            &  b\\
                                         \end{array}
                                       \right]\in\mathcal T$. Define $\Omega=\left[\begin{array}{cc}
                                           0 & \rho(m)+\rho_{x,y}(m) \\
                                            &  0\\
                                         \end{array}
                                       \right]$. Clearly $\Omega$ belongs to $$\text{Hom}_{\mathfrak T,\mathcal{A},\mathcal{B}}(\left[
         \begin{array}{rr}
             0 & \mathcal M \\
                & 0
          \end{array} \right],\left[
         \begin{array}{rr}
             0 & \mathcal M ^{(2n)}\\
                & 0
          \end{array} \right]).$$

  By (ii), we have $\mathfrak{F}(\Omega)=\overline{\Delta}_\Omega=\overline{D}_1=\overline{D}$. This means that
          $\mathfrak{F}$ is surjective. Finally, we must show that $\ker\mathfrak{F}=ZR_{\mathcal A,\mathcal B}(\mathcal M,\mathcal M^{(2n)})$. Let $\Omega\in\ker\mathfrak{F}$.
          Then $\mathfrak{F}(\Omega)\Big{(}\left[\begin{array}{cc}
                                                              a & m \\
                                                               &  b\\
                                                            \end{array}
                                                          \right]\Big{)}=\overline{D} \Big{(}\left[\begin{array}{cc}
                                                                                                                a & m \\
                                                                                                                 &  b\\
                                                                                                              \end{array}
                                                                                                            \right]\Big{)}=0=
                                                                                                            \overline{\Delta}_\Omega\Big{(}\left[\begin{array}{cc}
                                                                                                                                                                               a & m \\
                                                                                                                                                                                &  b\\
                                                                                                                                                                             \end{array}
                                                                                                                                                                           \right]\Big{)}$,
  hence  $\overline{\Delta}_\Omega$ is inner. Again by (ii), we have $\Omega\in ZR_{\mathcal A,\mathcal B}(\mathcal M,\mathcal M^{(2n)})$,
  and (i) implies that $\ker\mathfrak{F}=ZR_{\mathcal A,\mathcal B}(\mathcal M,\mathcal M^{(2n)})$.  Therefore
  \begin{eqnarray*}
  % \nonumber to remove numbering (before each equation)
       H_\mathfrak T^1(\mathcal T,\mathcal T^{(2n)})&\simeq&  \text{Hom}_{\mathfrak T,\mathcal{A},\mathcal{B}}\Big{(}\left[
                                    \begin{array}{rr}
                                        0 & \mathcal M \\
                                           & 0
                                     \end{array} \right],\left[
                                    \begin{array}{rr}
                                        0 & \mathcal M ^{(2n)}\\
                                           & 0
                                     \end{array} \right]\Big{)}\Big{/}\ker\mathfrak{F}\\
                                          &=& \frac{\text{Hom}_{\mathfrak T,\mathcal{A},\mathcal{B}}\Big{(}\left[
                                          \begin{array}{rr}
                                              0 & \mathcal M \\
                                                 & 0
                                           \end{array} \right],\left[
                                          \begin{array}{rr}
                                              0 & \mathcal M ^{(2n)}\\
                                                 & 0
                                           \end{array} \right]\Big{)}}{ZR_{\mathfrak T,\mathcal{A},\mathcal{B}}\Big{(}\left[
                                                                            \begin{array}{rr}
                                                                                0 & \mathcal M \\
                                                                                   & 0
                                                                             \end{array} \right], \left[
                                                                             \begin{array}{rr}
                                                                                                      0 & \mathcal M ^{(2n)}\\
                                                                                                         & 0
                                                                                                   \end{array} \right]\Big{)}}.
  \end{eqnarray*}
\end{proof}
The following result may be proved like
Proposition \ref{prop1} using a modification of Lemma
\ref{l2.1}.
\begin{prop}\label{P5}
Let $\mathcal A$ or $\mathcal{B}$ has a bounded approximate identity, and let $\mathcal A^{(2n)}$, $\mathcal{B}^{(2n)}$ and $\mathcal M^{(2n)}$ be non-degenerate.
Then the $\mathfrak T$-module map
$D:\mathcal T\longrightarrow\mathcal T^{(2n)}$ is a module
derivation if and only if there exist module derivations
$D_\mathcal A:\mathcal A\longrightarrow\mathcal A^{(2n)}$ and
$D_\mathcal B:\mathcal B\longrightarrow\mathcal B^{(2n)}$, and $\mathfrak{A}$-module map $\rho:\mathcal M\longrightarrow\mathcal M^{(2n)}$ which satisfies  conditions
 \emph{(iv)} and \emph{(v)} of Lemma \ref{l2.1} such that
                                       \begin{equation}\label{p1}
                                        D\Big{(}\left[\begin{array}{cc}
                                           a & m \\
                                            &  b\\
                                         \end{array}
                                       \right]\Big{)}=\left[\begin{array}{cc}
                                           D_\mathcal A(a) & a\cdot \gamma -\gamma\cdot b+\rho(m) \\
                                            & D_\mathcal B(b)\\
                                         \end{array}
                                       \right],
                                       \end{equation}
in which $\gamma\in\mathcal M^{(2n)}$ and $\left[\begin{array}{cc}
                                           a & m \\
                                            &  b\\
                                         \end{array}
                                      \right]\in\mathcal T$.
\end{prop}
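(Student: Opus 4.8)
The plan is to follow the proof of Proposition~\ref{prop1} closely, using the module actions of $\mathcal T$ on $\mathcal T^{(2n)}$ recorded before Lemma~\ref{po1} in place of those on $\mathcal T^{(2n-1)}$, and a non-unital version of Lemma~\ref{l2.1} in place of Lemma~\ref{po1}. Without loss of generality I assume $\mathcal A$ has a bounded approximate identity $(e_\lambda)_\lambda$; if it is $\mathcal B$ that has one, the argument is symmetric after interchanging the two corners of $\mathcal T$ together with the left and right actions on $\mathcal M$. The ``if'' part is immediate: for $D_\mathcal A$, $D_\mathcal B$, an $\mathfrak A$-module map $\rho$ satisfying (iv) and (v) of Lemma~\ref{l2.1}, and $\gamma\in\mathcal M^{(2n)}$, the map $\left[\begin{smallmatrix}a&m\\&b\end{smallmatrix}\right]\mapsto\left[\begin{smallmatrix}D_\mathcal A(a)&\rho(m)\\&D_\mathcal B(b)\end{smallmatrix}\right]$ is a module derivation by Lemma~\ref{l2.1}(vi) (its proof uses no unitality), while $\left[\begin{smallmatrix}a&m\\&b\end{smallmatrix}\right]\mapsto\left[\begin{smallmatrix}0&a\cdot\gamma-\gamma\cdot b\\&0\end{smallmatrix}\right]$ is exactly the inner module derivation $T\mapsto T\cdot\left[\begin{smallmatrix}0&\gamma\\&0\end{smallmatrix}\right]-\left[\begin{smallmatrix}0&\gamma\\&0\end{smallmatrix}\right]\cdot T$, as one checks from the $\mathcal T$-action on $\mathcal T^{(2n)}$; their sum is (\ref{p1}).

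For the converse, let $D:\mathcal T\longrightarrow\mathcal T^{(2n)}$ be a continuous module derivation, and define $D_\mathcal A(a)=\pi_\mathcal A D\left[\begin{smallmatrix}a&0\\&0\end{smallmatrix}\right]$, $D_\mathcal B(b)=\pi_\mathcal B D\left[\begin{smallmatrix}0&0\\&b\end{smallmatrix}\right]$ and $\rho(m)=\pi_\mathcal M D\left[\begin{smallmatrix}0&m\\&0\end{smallmatrix}\right]$, where $\pi_\mathcal A,\pi_\mathcal B,\pi_\mathcal M$ are the coordinate projections of $\mathcal T^{(2n)}\simeq\mathcal A^{(2n)}\oplus_{\ell^\infty}\mathcal M^{(2n)}\oplus_{\ell^\infty}\mathcal B^{(2n)}$. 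Just as in the proof of Lemma~\ref{l2.1}, the fact that $D$ is an $\mathfrak A$-module map forces $D_\mathcal A,D_\mathcal B,\rho$ to be $\mathfrak A$-module maps, and expanding $D$ on products of diagonal matrices shows $D_\mathcal A$ and $D_\mathcal B$ are module derivations (cf. \cite[Lemma 2.3]{me}). Then, by successively applying $D$ to the matrix products $\left[\begin{smallmatrix}a&0\\&0\end{smallmatrix}\right]\left[\begin{smallmatrix}a'&0\\&0\end{smallmatrix}\right]$, $\left[\begin{smallmatrix}a&0\\&0\end{smallmatrix}\right]\left[\begin{smallmatrix}0&0\\&b\end{smallmatrix}\right]$, $\left[\begin{smallmatrix}a&0\\&0\end{smallmatrix}\right]\left[\begin{smallmatrix}0&m\\&0\end{smallmatrix}\right]$ and $\left[\begin{smallmatrix}0&m\\&0\end{smallmatrix}\right]\left[\begin{smallmatrix}0&0\\&b\end{smallmatrix}\right]$, expanding via the $\mathcal T^{(2n)}$-actions, and invoking Cohen's factorization theorem (so that $\mathcal A=\mathcal A^2$) together with the non-degeneracy of $\mathcal A^{(2n)}$ and $\mathcal M^{(2n)}$, one recovers the block decomposition (i)--(iii) of Lemma~\ref{l2.1} and the compatibility conditions (iv) and (v) for $\rho$; here $\gamma$ is taken to be a weak$^*$ cluster point of the bounded net $(\eta_{e_\lambda})_\lambda$, where $\eta_a:=\pi_\mathcal M D\left[\begin{smallmatrix}a&0\\&0\end{smallmatrix}\right]$, and the relation $\eta_a=a\cdot\gamma$ (obtained from $ae_\lambda\to a$ and weak$^*$-continuity of the $\mathcal A$-action on $\mathcal M^{(2n)}$) together with non-degeneracy of $\mathcal M^{(2n)}$ makes $\gamma$ well defined. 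Finally, since $D$ preserves subtraction, $D\left[\begin{smallmatrix}a&m\\&b\end{smallmatrix}\right]=D\left[\begin{smallmatrix}a&0\\&0\end{smallmatrix}\right]+D\left[\begin{smallmatrix}0&m\\&0\end{smallmatrix}\right]+D\left[\begin{smallmatrix}0&0\\&b\end{smallmatrix}\right]$, and substituting (i)--(iii) yields (\ref{p1}).

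The step I expect to be the main obstacle is establishing that the diagonal of $D\left[\begin{smallmatrix}0&m\\&0\end{smallmatrix}\right]$ vanishes for \emph{every} $m\in\mathcal M$ (the first coordinate in (iii)): the products $\left[\begin{smallmatrix}a&0\\&0\end{smallmatrix}\right]\left[\begin{smallmatrix}0&m\\&0\end{smallmatrix}\right]$ and $\left[\begin{smallmatrix}0&m\\&0\end{smallmatrix}\right]\left[\begin{smallmatrix}0&0\\&b\end{smallmatrix}\right]$ control only its lower-left and upper-left entries on $\overline{\mathcal A\cdot\mathcal M}$ and $\overline{\mathcal M\cdot\mathcal B}$ respectively. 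To promote this to all of $\mathcal M$ one uses that the non-degeneracy of $\mathcal M^{(2n)}$ forces $\mathcal M$ to be essential as a left $\mathcal A$- and a right $\mathcal B$-module (this is \cite[Proposition 2.5]{me}), after which Cohen's factorization theorem writes a general $m$ both as $a\cdot m'$ and as $m''\cdot b$, and the two one-sided vanishing statements combine. Apart from this, the proof is the routine bookkeeping of $2\times2$ matrix products under the $\mathcal T^{(2n)}$-actions, running parallel to Lemma~\ref{l2.1} and Proposition~\ref{prop1}.
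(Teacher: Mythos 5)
Your overall strategy is the paper's own: define $D_\mathcal A$, $D_\mathcal B$ and $\rho$ by the coordinate projections, obtain $\gamma$ from a bounded approximate identity of $\mathcal A$ (your weak$^*$ cluster point formulation of $\gamma$ is in fact a more careful rendering of the paper's computation), and verify conditions (iv) and (v) of Lemma \ref{l2.1} by expanding $D$ on products; your explicit treatment of the ``if'' direction, via Lemma \ref{l2.1}(vi) together with the inner derivation implemented by $\left[\begin{smallmatrix}0&\gamma\\ &0\end{smallmatrix}\right]$, is correct and is left implicit in the paper.

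There is, however, a genuine gap precisely at the step you flag as the main obstacle, namely showing that the diagonal entries of $D\left[\begin{smallmatrix}0&m\\ &0\end{smallmatrix}\right]$ vanish for every $m\in\mathcal M$. Your plan is to deduce that $\mathcal M$ is essential on both sides from non-degeneracy of $\mathcal M^{(2n)}$ and then factor $m=a\cdot m'$ and $m=m''\cdot b$ by Cohen's theorem. This is not available under the stated hypotheses: factoring $m=m''\cdot b$ requires a bounded approximate identity in $\mathcal B$, while only one of $\mathcal A$, $\mathcal B$ is assumed to have one; and the implication ``$\mathcal M^{(2n)}$ non-degenerate $\Rightarrow \mathcal M$ essential'' is not what the paper records (it states the converse, essential $\Rightarrow$ dual non-degenerate) and fails in general on a side without an approximate identity --- for instance $\mathcal A=\mathcal M=\ell^1(\mathbb N)$ with convolution is not essential, yet the bidual module action is non-degenerate. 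The correct route, and the one the paper's citation of \cite[Theorem 3.2]{me} encapsulates, uses the \emph{zero} products you did not list: writing $D\left[\begin{smallmatrix}0&m\\ &0\end{smallmatrix}\right]=\left[\begin{smallmatrix}\theta_m&\rho(m)\\ &\mu_m\end{smallmatrix}\right]$ and expanding $D$ on $\left[\begin{smallmatrix}0&m\\ &0\end{smallmatrix}\right]\left[\begin{smallmatrix}a&0\\ &0\end{smallmatrix}\right]=0$ and $\left[\begin{smallmatrix}0&0\\ &b\end{smallmatrix}\right]\left[\begin{smallmatrix}0&m\\ &0\end{smallmatrix}\right]=0$ gives $\theta_m\cdot a=0$ for all $a\in\mathcal A$ and $b\cdot\mu_m=0$ for all $b\in\mathcal B$, whence $\theta_m=\mu_m=0$ for every $m$ directly from non-degeneracy of $\mathcal A^{(2n)}$ and $\mathcal B^{(2n)}$, with no essentiality of $\mathcal M$ and no factorization; the same device also yields $\pi_\mathcal B D\left[\begin{smallmatrix}a&0\\ &0\end{smallmatrix}\right]=0$ and the form $\left[\begin{smallmatrix}0&-\gamma\cdot b\\ &D_\mathcal B(b)\end{smallmatrix}\right]$ of $D\left[\begin{smallmatrix}0&0\\ &b\end{smallmatrix}\right]$. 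With that replacement, the rest of your argument goes through as in Proposition \ref{prop1}.
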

\begin{proof}
We argue similar to the proof of Proposition \ref{prop1}. Let $D:\mathcal
T\longrightarrow\mathcal T^{(2n)}$ be a continuous module
derivation. Define $D_\mathcal A:\mathcal
A\longrightarrow\mathcal A^{(2n)}$ by $D_\mathcal
A(a)=\pi_\mathcal A(D\Big{(}\left[\begin{array}{cc}
                                           a & 0 \\
                                            &  0\\
                                         \end{array}
                                       \right]\Big{)})$, and $D_\mathcal B:\mathcal B\longrightarrow\mathcal B^{(2n)}$ by $D_\mathcal B(b)=\pi_\mathcal B(D\Big{(}\left[\begin{array}{cc}
                                           0 & 0 \\
                                            &  b\\
                                         \end{array}
                                       \right]\Big{)})$. Obviously these maps are $\mathfrak A$-module maps, and by \cite[Lemma 2.3]{me}, they
are module derivations. Let $(e_\alpha)_{\alpha\in\Lambda}$ be a
bounded approximate identity of $\mathcal A$, and let
$D\Big{(}\left[\begin{array}{cc}
                                           a & 0 \\
                                            &  0\\
                                         \end{array}
                                       \right]\Big{)}=\left[\begin{array}{cc}
                                           D_\mathcal A(a) & \eta\\
                                            &  \mu\\
                                         \end{array}
                                       \right]$ for all $a\in\mathcal A$. Then
                                       \begin{eqnarray}\label{pp1}
                                       \nonumber
                                        D\Big{(}\left[\begin{array}{cc}
                                           a & 0 \\
                                            &  0\\
                                         \end{array}
                                       \right]\Big{)}&=& D\Big{(}\left[\begin{array}{cc}
                                           \lim_\alpha  ae_\alpha& 0 \\
                                            &  0\\
                                         \end{array}
                                       \right]\Big{)}=D\Big{(}\lim_\alpha\left[\begin{array}{cc}
                                         a  & 0 \\
                                            &  0\\
                                         \end{array}
                                       \right]\left[\begin{array}{cc}
                                            e_\alpha & 0 \\
                                            &  0\\
                                         \end{array}
                                       \right]\Big{)} \\
                                          \nonumber
                                          &=& \lim_\alpha\Big{(}\left[\begin{array}{cc}
                                           a & 0 \\
                                            &  0\\
                                         \end{array}
                                       \right]\cdot\left[\begin{array}{cc}
                                           D_\mathcal A(e_\alpha) & \gamma\\
                                            &  \theta\\
                                         \end{array}
                                       \right]\\
                                       \nonumber
                                       &+&\left[\begin{array}{cc}
                                           D_\mathcal A(a) & \eta\\
                                            &  \mu\\
                                         \end{array}
                                       \right]\cdot\left[\begin{array}{cc}
                                            e_\alpha & 0 \\
                                            &  0\\
                                         \end{array}
                                       \right]\Big{)}\\
                                          &=&\left[\begin{array}{cc}
                                          \lim_\alpha D_\mathcal A(ae_\alpha) &  a\cdot\gamma\\
                                            &  0\\
                                         \end{array}
                                       \right]=\left[\begin{array}{cc}
                                           D_\mathcal A(a) & a\cdot\gamma\\
                                            &  0\\
                                         \end{array}
                                       \right].
                                       \end{eqnarray}

Let $b\in\mathcal B$ and $D\Big{(}\left[\begin{array}{cc}
                                           0 & 0 \\
                                            &  b\\
                                         \end{array}
                                       \right]\Big{)}=\left[\begin{array}{cc}
                                           \theta & \eta \\
                                            &  D_\mathcal B(b)\\
                                         \end{array}
                                       \right]$. Since $\mathcal M^{(2n)}$ and $\mathcal A^{(2n)}$ are non-degenerate (see \cite[Theorem 3.2]{me}),
                                       we have
                                       \begin{equation}\label{pp2}
                                        D\Big{(}\left[\begin{array}{cc}
                                           0 & 0 \\
                                            &  b\\
                                         \end{array}
                                       \right]\Big{)}=\left[\begin{array}{cc}
                                           0 & -\gamma\cdot b \\
                                            &  D_\mathcal B(b)\\
                                         \end{array}
                                       \right].
                                       \end{equation}

For $m\in\mathcal M$, suppose that $D\Big{(}\left[\begin{array}{cc}
                                           0 & m \\
                                            &  0\\
                                         \end{array}
                                       \right]\Big{)}=\left[\begin{array}{cc}
                                           \theta & \eta \\
                                            &  \mu\\
                                         \end{array}
                                       \right]$, then by \cite[Theorem 3.2]{me}, $\theta=\mu=0$. Now, we define $\rho:\mathcal M\longrightarrow\mathcal M^{(2n)}$ by $\rho(m)=\pi_\mathcal M(D\Big{(}\left[\begin{array}{cc}
                                           0 & m \\
                                            &  0\\
                                         \end{array}
                                       \right]\Big{)})$. It is clear that $\rho$ is an $\mathfrak A$-module map. Therefore it  suffices to show that
 $\rho$ satisfies in conditions (iv) and (v) of  Lemma \ref{l2.1}. We check these conditions as
 follows:
\begin{eqnarray*}
  \rho(a\cdot m) &=& \pi_\mathcal M(D\Big{(}\left[\begin{array}{cc}
                                           0 & a\cdot m \\
                                            &  0\\
                                         \end{array}
                                       \right]\Big{)})=\pi_\mathcal M(D\Big{(}\left[\begin{array}{cc}
                                           a & 0  \\
                                            &  0\\
                                         \end{array}
                                       \right]\cdot\left[\begin{array}{cc}
                                           0 &  m \\
                                            &  0\\
                                         \end{array}
                                       \right]\Big{)})  \\
   &=&\pi_\mathcal M\Big{(}\left[\begin{array}{cc}
                                           a & 0\\
                                            &  0\\
                                         \end{array}
                                       \right]\cdot\left[\begin{array}{cc}
                                           0 &  \eta \\
                                            &  0\\
                                         \end{array}
                                       \right]+\left[\begin{array}{cc}
                                           D_\mathcal A(a) & a\cdot\gamma\\
                                            &  0\\
                                         \end{array}
                                       \right]\cdot\left[\begin{array}{cc}
                                           0 &  m \\
                                            &  0\\
                                         \end{array}
                                       \right]\Big{)}  \\
   &=&  a\cdot\eta+D_\mathcal A(a)\cdot m= a\cdot\rho(m)+D_\mathcal A(a)\cdot m,
\end{eqnarray*}
and similarly we have $\rho(m\cdot b)=\rho(m)\cdot b+m\cdot D_\mathcal B(b)$.
\end{proof}
We can now rephrase part (iii) of  Theorem \ref{T5.2} as follows.
\begin{theorem}\label{T5.4}
Let $\mathcal A$ or $\mathcal{B}$ has a bounded approximate identity, and let $\mathcal A^{(2n)}$, $\mathcal{B}^{(2n)}$ and $\mathcal M^{(2n)}$ be non-degenerate. If $\mathcal A$ and $\mathcal{B}$ are $(2n)$-weakly module amenable as $\mathfrak{A}$-modules, then
\begin{equation}\label{t5.4}
    H_\mathfrak T^1(\mathcal T,\mathcal T^{(2n)})\simeq  {\emph{Hom}}_{\mathfrak A,\mathcal{A},\mathcal{B}}(\mathcal{M},\mathcal{M}^{(2n)})\big{/}ZR_{\mathfrak A,\mathcal{A},\mathcal{B}}(\mathcal{M},\mathcal{M}^{(2n)}).
\end{equation}
\end{theorem}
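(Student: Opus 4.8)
The plan is to rephrase the proof of part (iii) of Theorem \ref{T5.2} almost verbatim, the only change being that the structural description of a module derivation is now supplied by Proposition \ref{P5}, whose hypotheses coincide with those assumed here, rather than by Lemma \ref{l2.1}, which required $\mathcal A$, $\mathcal B$ and $\mathcal M$ to be unital. So first I would take a continuous module derivation $D:\mathcal T\longrightarrow\mathcal T^{(2n)}$ and apply Proposition \ref{P5} to obtain module derivations $D_\mathcal A:\mathcal A\longrightarrow\mathcal A^{(2n)}$, $D_\mathcal B:\mathcal B\longrightarrow\mathcal B^{(2n)}$, an $\mathfrak A$-module map $\rho:\mathcal M\longrightarrow\mathcal M^{(2n)}$ satisfying conditions (iv) and (v) of Lemma \ref{l2.1}, and an element $\gamma\in\mathcal M^{(2n)}$ such that
$$D\Big{(}\left[\begin{array}{cc} a & m \\ & b \end{array}\right]\Big{)}=\left[\begin{array}{cc} D_\mathcal A(a) & a\cdot\gamma-\gamma\cdot b+\rho(m) \\ & D_\mathcal B(b) \end{array}\right]$$
for every $\left[\begin{array}{cc} a & m \\ & b \end{array}\right]\in\mathcal T$.

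Next I would use the hypothesis that $\mathcal A$ and $\mathcal B$ are $(2n)$-weakly module amenable to write $D_\mathcal A=D_x$ and $D_\mathcal B=D_y$ for some $x\in\mathcal A^{(2n)}$ and $y\in\mathcal B^{(2n)}$ (this is legitimate because commutativity of the $\mathfrak A$-actions forces $J_\mathcal A=J_\mathcal B=0$), let $D_0$ be the inner $\mathfrak T$-module derivation induced by $\left[\begin{array}{cc} x & \gamma \\ & y \end{array}\right]\in\mathcal T^{(2n)}$ with the sign of the $\rho_{x,y}$-term chosen as in $(\ref{e3})$, and put $D_1:=D-D_0$. The short computation from the proof of Theorem \ref{T5.2}(iii) then yields
$$D_1\Big{(}\left[\begin{array}{cc} a & m \\ & b \end{array}\right]\Big{)}=\left[\begin{array}{cc} 0 & \rho(m)+\rho_{x,y}(m) \\ & 0 \end{array}\right],$$
so that $\overline D=\overline D_1$ in $H_\mathfrak T^1(\mathcal T,\mathcal T^{(2n)})$; and using conditions (iv) and (v) together with the associativity of the $\mathcal A$- and $\mathcal B$-actions on $\mathcal M^{(2n)}$ I would check that $\Omega_\mathcal M:=\rho+\rho_{x,y}$ lies in $\mathrm{Hom}_{\mathfrak A,\mathcal A,\mathcal B}(\mathcal M,\mathcal M^{(2n)})$.

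Finally, as in Theorem \ref{T5.2}(iii), I would introduce the map $\mathfrak F:\mathrm{Hom}_{\mathfrak A,\mathcal A,\mathcal B}(\mathcal M,\mathcal M^{(2n)})\longrightarrow H_\mathfrak T^1(\mathcal T,\mathcal T^{(2n)})$ sending $\Omega_\mathcal M$ to the class $\overline{\Delta}_{\Omega_\mathcal M}$, which is a well-defined module map by part (ii) of Theorem \ref{T5.2}; the computation above shows $\mathfrak F$ is onto, and parts (i) and (ii) of Theorem \ref{T5.2} identify $\ker\mathfrak F$ with $ZR_{\mathfrak A,\mathcal A,\mathcal B}(\mathcal M,\mathcal M^{(2n)})$, whereupon the first isomorphism theorem yields $(\ref{t5.4})$. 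The step I expect to be the actual obstacle is not the homological bookkeeping, which is inherited wholesale from Theorem \ref{T5.2}, but the verification that the correction term $\rho+\rho_{x,y}$ is simultaneously an $\mathcal A$-module map and a $\mathcal B$-module map; this is precisely where conditions (iv) and (v) of Lemma \ref{l2.1} and the module associativity on $\mathcal M^{(2n)}$ must be used with care, whereas its $\mathfrak A$-linearity follows at once from that of $\rho$ and $\rho_{x,y}$.
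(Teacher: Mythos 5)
Your proposal is correct and is essentially the paper's own proof: the paper disposes of Theorem \ref{T5.4} in one line by saying that, applying Proposition \ref{P5}, the argument of Theorem \ref{T5.2}(iii) can be repeated, which is precisely the substitution you carry out (structural form of $D$ from Proposition \ref{P5} instead of Lemma \ref{l2.1}, then the same $D_0$, $D_1=D-D_0$, $\mathfrak F$, and kernel identification). Your placement of $x\in\mathcal A^{(2n)}$, $y\in\mathcal B^{(2n)}$ and the remark that commutativity of the $\mathfrak A$-actions gives $J_\mathcal A=J_\mathcal B=\{0\}$ are consistent with the paper's standing assumptions (and in fact tidy up a small slip in the paper's statement ``$x\in\mathcal A$, $y\in\mathcal B$'' in the proof of Theorem \ref{T5.2}).
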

\begin{proof} Appling Proposition \ref{P5}, the argument of
Theorem \ref{T5.2} can be repeated to obtain the result.
\end{proof}
\begin{corol}\label{corf}
Let $\mathcal A$ has a bounded approximate identity, and $\mathcal A^{(2n)}$
be non-degenerate. If $\mathcal A$ is $(2n)$-weakly module amenable (as
an $\mathfrak{A}$-module), then
$\mathcal{T}=\left[\begin{array}{cc}
                                           \mathcal A & \mathcal A \\
                                            & \mathcal A\\
                                         \end{array}
                                       \right]$ is $(2n)$-weakly module amenable (as an $\mathfrak{T}$-module).
\end{corol}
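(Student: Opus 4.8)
The plan is to obtain the corollary as an application of Theorem~\ref{T5.4} with $\mathcal B=\mathcal A$ and $\mathcal M=\mathcal A$, the algebra $\mathcal A$ acting on $\mathcal M=\mathcal A$ on both sides by multiplication. First I would check the hypotheses: $\mathcal A$ (now also in the role of $\mathcal B$) has a bounded approximate identity; $\mathcal A^{(2n)}=\mathcal B^{(2n)}=\mathcal M^{(2n)}$ is non-degenerate by assumption; and $\mathcal A=\mathcal B$ is $(2n)$-weakly module amenable. Moreover $\mathcal M=\mathcal A$, being a commutative $\mathfrak A$-module acting on itself from both sides, is a Banach $\mathcal A$-$\mathfrak A$-module with compatible actions, so the setting of Section~5 applies; since $\mathcal A$, $\mathcal B$ and $\mathcal M$ are all commutative $\mathfrak A$-modules, $J_{\mathcal T}=0$ (from the description of $J_{\mathcal T}$ in Section~3), and therefore $(2n)$-weak module amenability of $\mathcal T$ is exactly the statement $H^1_{\mathfrak T}(\mathcal T,\mathcal T^{(2n)})=\{0\}$. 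Theorem~\ref{T5.4} then yields
\[
H^1_{\mathfrak T}(\mathcal T,\mathcal T^{(2n)})\simeq \text{Hom}_{\mathfrak A,\mathcal A,\mathcal A}(\mathcal A,\mathcal A^{(2n)})\big/ ZR_{\mathfrak A,\mathcal A,\mathcal A}(\mathcal A,\mathcal A^{(2n)}),
\]
so it remains to prove that this quotient is trivial, i.e. that every continuous $\mathfrak A$-module map $\varphi:\mathcal A\to\mathcal A^{(2n)}$ with $\varphi(ab)=a\cdot\varphi(b)$ and $\varphi(ab)=\varphi(a)\cdot b$ for all $a,b\in\mathcal A$ is a central Rosenblum operator.

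To see this, fix such a $\varphi$ and a bounded approximate identity $(e_\alpha)$ of $\mathcal A$; passing to a subnet I may assume $\varphi(e_\alpha)\to x$ in the weak$^*$ topology of $\mathcal A^{(2n)}$ for some $x\in\mathcal A^{(2n)}$. For $c\in\mathcal A$: on the one hand $\varphi(e_\alpha)\cdot c=\varphi(e_\alpha c)\to\varphi(c)$ in norm, because $e_\alpha c\to c$ and $\varphi$ is continuous; on the other hand $\varphi(e_\alpha)\cdot c\to x\cdot c$ in the weak$^*$ topology, since the map $y\mapsto y\cdot c$ on $\mathcal A^{(2n)}$ (which coincides with the Arens product by the fixed element $c$) is weak$^*$-continuous. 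Comparing, $\varphi(c)=x\cdot c$. Running the same argument with $c\cdot\varphi(e_\alpha)=\varphi(ce_\alpha)$, and using that $y\mapsto c\cdot y$ is weak$^*$-continuous for $c\in\mathcal A$ (as $\mathcal A$ lies in the topological centre of $\mathcal A^{(2n)}$), gives $\varphi(c)=c\cdot x$. Hence $x\cdot c=c\cdot x$ for every $c\in\mathcal A$, that is, $x\in Z_{\mathcal A}(\mathcal A^{(2n)})$, and then $\varphi(m)=x\cdot m=\rho_{x,0}(m)$ with $0\in Z_{\mathcal A}(\mathcal A^{(2n)})$, so $\varphi\in ZR_{\mathfrak A,\mathcal A,\mathcal A}(\mathcal A,\mathcal A^{(2n)})$. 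Since the reverse inclusion $ZR_{\mathfrak A,\mathcal A,\mathcal A}(\mathcal A,\mathcal A^{(2n)})\subseteq\text{Hom}_{\mathfrak A,\mathcal A,\mathcal A}(\mathcal A,\mathcal A^{(2n)})$ holds (a direct computation, cf. Theorem~\ref{T5.2}(i)), the two coincide, the displayed quotient is $\{0\}$, and consequently $\mathcal T$ is $(2n)$-weakly module amenable as an $\mathfrak T$-module.

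The reductions of the first paragraph (verifying the hypotheses of Theorem~\ref{T5.4} and $J_{\mathcal T}=0$) are routine; the substantive point is the weak$^*$ limit computation. Because the iterated Arens products are only separately weak$^*$-continuous, one must keep the fixed element $c\in\mathcal A$ in the slot where continuity is available and the bounded net $(\varphi(e_\alpha))$ in the other. This is precisely why \emph{both} one-sided module identities satisfied by $\varphi$ are used — one to produce $\varphi(\cdot)=x\cdot(\cdot)$, the other to produce $\varphi(\cdot)=(\cdot)\cdot x$ — so that $x$ turns out to be central and $\varphi$ is seen to be a central Rosenblum operator.
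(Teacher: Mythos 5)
Your proposal is correct and follows essentially the same route as the paper's own proof: invoke Theorem \ref{T5.4} with $\mathcal B=\mathcal M=\mathcal A$ and then show $\text{Hom}_{\mathfrak A,\mathcal A,\mathcal A}(\mathcal A,\mathcal A^{(2n)})=ZR_{\mathfrak A,\mathcal A,\mathcal A}(\mathcal A,\mathcal A^{(2n)})$ by taking a weak$^*$ cluster point $x$ of $(\varphi(e_\alpha))$ and deducing $\varphi(a)=x\cdot a=a\cdot x$. Your version merely spells out details the paper leaves implicit (the hypothesis check, $J_{\mathcal T}=0$ so that vanishing of $H^1_{\mathfrak T}(\mathcal T,\mathcal T^{(2n)})$ is exactly $(2n)$-weak module amenability, and which variable carries the weak$^*$-continuity), so no essential difference.
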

\begin{proof}
Let $(e_\alpha)$ be a bounded approximate identity of $\mathcal A$, and
let $\varphi$ in $\text{Hom}_{\mathfrak
A,\mathcal{A},\mathcal{A}}$\\$(\mathcal{A},\mathcal{A}^{(2n)})$. Then
there exists $E\in\mathcal A^{(2n)}$ and a subnet
$\{\varphi(e_\beta)\}$ of $\{\varphi(e_\beta)\}$ such that
$\varphi(e_\beta)\stackrel{w^*}{\longrightarrow}E$. We have
\begin{equation*}
    \varphi(a)=w^*-\lim_\beta\varphi(e_\beta a)=w^*-\lim_\beta e_\beta\varphi(a)=Ea.
\end{equation*}
Similarly, $\varphi(a)=aE$. This shows that $\varphi\in
ZR_{\mathfrak A,\mathcal A,\mathcal A}(\mathcal A,\mathcal A^{(2n)})$ and $
H_\mathfrak T^1(\mathcal T,\mathcal T^{(2n)})$\\$=\{0\}$ by Theorem
\ref{T5.4}.
\end{proof}

%-----------------------------------------------------------------------------------------------------------------------------------------------
%---------------------------------------------------------------------------------------------------------------------------------------------------

\section{Examples}
In this section, by using the results of the previous sections we show that under which conditions the Banach algebra
$\mathcal T_0=\left[\begin{array}{cc}
                                           \ell ^{1}(S) & \ell ^{1}(S) \\
                                            &  \ell ^{1}(S)\\
                                         \end{array}
                                       \right]$ is permanently weakly
module amenable and  module Arens regular where $S$ is an inverse semigroup. 
\begin{definition} A discrete semigroup $S$ is called an inverse
semigroup if for each $ s \in S $ there is a unique element $s^*
\in S$ such that $ss^*s=s$ and $s^*ss^*=s^*$. An element $e \in
S$ is called an idempotent if $e=e^*=e^2$. The set of idempotents
of $S$ is denoted by $E$.\end{definition}

Let $S$ be an inverse semigroup with the set of idempotents $E$.
By \cite[Theorem V.1.2]{how} $E$ is a commutative subsemigroup of
$S$ and a semilattice, $ \ell ^{1}(E)$ could be regarded as
a commutative subalgebra of $ \ell ^{1}(S)$, and therefore $\ell
^{1}(S)$ is a Banach algebra and a Banach $ \ell ^{1}(E)$-module
with compatible actions \cite{am1}.

Let $k\in \mathbb{N}$. Recall that $E$ satisfies condition $D_k$
\cite{dun} if given $f_1,f_2,...,f_{k+1}\in E$ there exist $e\in
E$ and $i,j$ such that
$$1\leq i<j\leq k+1, f_ie=f_i, f_je=f_j.$$

Duncan and Namioka in \cite[Theorem 16]{dun} proved that for any
inverse semigroup $S$, $ \ell ^{1}(S)$ has a bounded approximate
identity if and only if $E$ satisfies condition $D_k$ for some
$k$.

Let $S$ be a commutative inverse semigroup with the set of
idempotents $E$. Consider $\ell^1(S)$ as an $\ell^1(E)$-module with
the following action:

\begin{equation}\label{61} \delta_e\cdot\delta_s
=\delta_s\cdot\delta_e = \delta_s * \delta_e=
\delta_{se},\quad(s\in S, e\in E).
\end{equation}

\begin{theorem}\label{tn}
Let $n\in\mathbb{N}$ and let $S$ be a commutative inverse
semigroup with the set of idempotents $E$. Then $\ell^1(S)$ is
$n$-weakly module amenable as an $\ell^1(E)$-module with the
actions  \emph{(6.1)}.
\end{theorem}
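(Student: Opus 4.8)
The plan is to reduce the statement for an arbitrary $n$ to the already known case $n=1$ (weak module amenability), exploiting the fact that for a \emph{commutative} inverse semigroup the module \eqref{61} is bi-commutative and the associated ideal $J$ vanishes.

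First I would collect the structural facts. Because $S$ is commutative, $E$ is central in $S$, so with the actions \eqref{61} the algebra $\ell^1(S)$ acts on itself by multiplication from both sides and satisfies $\alpha\cdot a=a\cdot\alpha$ for all $\alpha\in\ell^1(E)$, $a\in\ell^1(S)$; hence $\ell^1(S)$ is a bi-commutative Banach $\ell^1(S)$-$\ell^1(E)$-module, and $J=J_{\ell^1(S)}$, being generated by the elements $(a\cdot\alpha)b-a(\alpha\cdot b)=a\alpha b-a\alpha b=0$, is $\{0\}$. Therefore $(\ell^1(S)/J)^{(n)}=\ell^1(S)^{(n)}$, and, as the $n$-th dual of a bi-commutative module, it is again a (commutative, indeed bi-commutative) Banach $\ell^1(S)$-$\ell^1(E)$-module for every $n$. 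I would also record that $\ell^1(S)$ is essential as an $\ell^1(S)$-module, since $\delta_s=\delta_s*\delta_{s^*s}$ gives $\overline{\ell^1(S)^2}=\ell^1(S)$. For $n=1$ the claim is then precisely that $\ell^1(S)$ is weakly $\ell^1(E)$-module amenable, which is the theorem of Amini and Bagha \cite{am3} for commutative inverse semigroups.

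For general $n$ I would bootstrap from this. Since $\ell^1(S)$ is an essential bi-commutative $\ell^1(S)$-$\ell^1(E)$-module that is weakly module amenable, Lemma \ref{deff} applies and yields that \emph{every} bounded module derivation from $\ell^1(S)$ into a bi-commutative Banach $\ell^1(S)$-$\ell^1(E)$-module is zero. Applying this with the bi-commutative coefficient module $\ell^1(S)^{(n)}$, every bounded module derivation $D:\ell^1(S)\to\ell^1(S)^{(n)}$ is zero, hence inner; by Definition \ref{def1} this says exactly that $\ell^1(S)$ is $n$-weakly module amenable. As $n\in\mathbb N$ was arbitrary, this finishes the proof (and, in particular, shows $\ell^1(S)$ is permanently weakly module amenable).

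The only point requiring care — and the one I would single out as the crux — is the reduction itself: one must verify that the iterated duals $\ell^1(S)^{(n)}$ are genuinely bi-commutative $\ell^1(S)$-$\ell^1(E)$-modules and that $J=\{0\}$, since it is exactly these two facts (both consequences of $E$ being central in $S$) that make weak module amenability propagate to all $n$ through Lemma \ref{deff}. If instead the left action of $\ell^1(E)$ were taken to be trivial, as in earlier work, $J$ would be nontrivial, the coefficient modules would fail to be bi-commutative, and the even case would genuinely require the machinery of Section 5; it is the specific choice of the action \eqref{61} that makes the argument collapse to the case $n=1$.
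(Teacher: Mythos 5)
Your proposal is correct and follows essentially the same route as the paper's own proof: bi-commutativity of $\ell^1(S)$ and its iterated duals (with $J=\{0\}$), the $n=1$ case from Amini--Bagha, essentiality of $\ell^1(S)$, and Lemma \ref{deff} to conclude that every bounded module derivation into $\ell^1(S)^{(n)}$ vanishes. Your explicit remarks that $J=\{0\}$ and $\delta_s=\delta_s*\delta_{s^*s}$ merely make precise what the paper leaves implicit.
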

\begin{proof}For any semigroup $S$, group algebra $\ell^{1}(S)$ is commutative if and only if $S$ is
commutative. Since $\ell^1(S)$ is a bi-commutative Banach
$\ell^1(S)$-$\ell^1(E)$-module, so is $\ell^{1}(S)^{(n)}$. By
\cite [Theorem 3.1]{am3}, $\ell^1(S)$ is weakly module amenable
as an $\ell^1(E)$-module. The semigroup algebra $ \ell ^{1}(S)$ is
essential, in fact $ \ell ^{1}(S)= \ell ^{1}(S)\star\ell
^{1}(E)\subseteq\ell ^{1}(S)\star\ell ^{1}(S)\subseteq\ell
^{1}(S)$ (see the proof of \cite[Theorem 3.15]{bab}). Now, it follows from Lemma \ref{deff} that every module derivation from
$\ell^{1}(S)$ into $\ell^{1}(S)^{(n)}$ is zero. This shows that
$\ell^1(S)$ is $n$-weakly module amenable.
\end{proof}

In \cite{po}, the authors proved that the Banach algebra
$\mathcal T_0=\left[\begin{array}{cc}
                                           \ell ^{1}(S) & \ell ^{1}(S) \\
                                            &  \ell ^{1}(S)\\
                                         \end{array}
                                       \right]$ is weak $\mathfrak T_0$-module amenable in which $\mathfrak T_0:=\Big\{\left[
                                         \begin{array}{cc}
                                          \alpha &  \\
                                            &  \alpha\\
                                         \end{array}
                                       \right]~|~\alpha\in\ell
^{1}(E)\Big\}$, where $S$ is a unital commutative inverse
semigroup. The condition of being unital for $S$ is rather strong and it
can be replaced by the weaker condition that $E$ satisfies condition $D_k$ for some $k$.
\begin{example}\label{123} Let $S$ be a commutative inverse semigroup such that $E$ satisfies condition
$D_k$ for some $k$. By
Theorem \ref{t2} we have
\begin{equation*}\label{}
    \mathcal{H}_{\mathfrak T_0}^1(\mathcal T_0, \mathcal T_0^{(2n-1)})\simeq\mathcal{H}_{\ell ^{1}(E)}^1(\ell ^{1}(S),\ell ^{1}(S)^{(2n-1)})\oplus\mathcal{H}_{\ell ^{1}(E)}^1(\ell ^{1}(S), \ell ^{1}(S)^{(2n-1)}).
\end{equation*}
where $\mathcal T_0$ is a Banach $\mathfrak T_0$-module. Since
$\ell ^{1}(S)$ is $n$-weakly module amenable (Theorem \ref{tn}),
$\mathcal T_0$ is $(2n+1)$-weakly module amenable (as $\mathfrak
T_0$-module) again by Theorem \ref{t2}. On the other hand, $\ell
^{1}(S)$ possess a bounded approximate identity, and thus
$\mathcal{T}_0$ is $(2n)$-weakly module amenable by Corollary
\ref{corf}. Therefore $\mathcal{T}_0$ is permanently weakly
module amenable.
\end{example}

Here, for technical reasons, we let $ \ell ^{1}(E)$ act on $ \ell
^{1}(S)$ by multiplication from right and trivially from left,
that is
$$\delta_e\cdot\delta_s = \delta_s, \,\,\delta_s\cdot\delta_e =
\delta_{se} = \delta_s * \delta_e \hspace{0.3cm}(s \in S,  e \in
E).$$

In this case, the ideal $J$ (see section 2) is the closed linear
span of $\{\delta_{set}-\delta_{st} \quad s,t \in S,  e \in E
\}.$ We consider an equivalence relation on $S$ as follows:
$$s\approx t \Longleftrightarrow \delta_s-\delta_t \in J \hspace{0.2cm} (s,t \in
S).$$

For an inverse semigroup $S$, the quotient ${S}/{\approx}$ is a
discrete group (see \cite{am2} and \cite{pou}). Indeed,
${S}/{\approx}$ is homomorphic to the maximal group homomorphic
image $G_S$ \cite{mn} of $S$ \cite{pou2}. In particular, $S$ is
amenable if and only if $G_S$ is amenable \cite{dun, mn}.
\begin{example} \label{}
The Banach algebra $\mathcal T_0$ is  module Arens regular (as an
Banach $\mathfrak T_0$-module) if and only if $\ell ^{1}(S)$ is
module Arens regular as an $\ell ^{1}(E)$-module with trivial
left action and canonical right action by Theorem \ref{the2}. Now
it follows from \cite[Theorem 3.3]{re} that $\mathcal T_0$ is
module Arens regular if and only if is $G_S$ is finite.
\end{example}
\section*{Acknowledgement}
The authors sincerely thank the anonymous reviewers for their
careful reading, constructive comments and fruitful suggestions
to improve the quality of the first draft. The authors also would like to thank Dr. Massoud Amini, for his
valuable discussions and useful comments.

\end{document}